\newtheorem{theorem}{Theorem}
\newtheorem{proposition}[theorem]{Proposition}
\newtheorem{lemma}[theorem]{Lemma}
\newtheorem{corollary}[theorem]{Corollary}
\title[Agro-ecological control: optimizing the harvest]{Agro-ecological control of a pest-host system: optimizing the harvest}
\author{Baptiste Maucourt}
\address[B. M.]{Institut Camille Jordan, Universit\'{e} Claude Bernard Lyon-1, 43 boulevard du 11 novembre 1918, 69622 Villeurbanne Cedex, France}
\email{maucourt@math.univ-lyon1.fr}
\begin{document}

\keywords{reaction--diffusion, heterogeneous environments, ideal free distribution, optimal control, prey--predator system, vector-borne disease}
\subjclass[2010]{35K57, 49K20, 92D25.}

\begin{abstract}
We delve into the interactions between a prey-predator and a vector-borne epidemic system, driven by agro-ecological motivations. This system involves an ODE, two reaction--diffusion PDEs and one reaction--diffusion--advection PDE. It has no complete variational or monotonic structure and features spatially heterogeneous coefficients. Our initial focus is to examine the continuity of a quantity known as "harvest", which depends on the time-integral of infected vectors. We analyze its asymptotic behaviour as the domain becomes homogeneous. Then we tackle a non-standard optimal control problem related to the linearized harvest and conduct an analysis to establish the existence, uniqueness, and properties of optimizers. Finally, we refine the location of optimizers under specific initial conditions.
\end{abstract}

\maketitle

\section{Introduction}

We are interested in the sugar beet agro-ecosystem, particularly how aphids spread four yellow viruses (BMYV, BChV, BYV, BtMV) in beet fields. Controlling these viruses is crucial, as they can reduce sugar concentration in beets by fifty percent. Pesticides are commonly used but harm biodiversity and health. An alternative approach involves using natural aphid predators like ladybugs (Hippodamia variegata or Chnootriba similis) in flower patches among beet fields. These "biodiversity refuges" can boost predator populations and control aphids, but reduces the total number of beets present in the field. We seek optimal refuge geometry to maximize the harvest of non-infected beets, treating it as an optimal control problem. Many similar agro-ecosystems can benefit from our analysis.

This work is a follow-up analysis from \cite{girardinmaucourt2023}, in which one can find a more thorough biological context of our problem in the introduction, as well as comments on the ideal free dispersal strategy of the predators studied in \cite{cantrellcosnerlou2010}. In our initial paper, we primarily presented numerical arguments regarding the features of harvest optimizers. While a uniform refuge optimizes a linear criterion, we illustrated that its optimality seems not always guaranteed, depending on the initial distribution of infected vectors. For example, when infected vectors are predominantly concentrated in the center of the field, a centrally positioned refuge results in a superior harvest compared to a uniform one. Here, we provide an analytical proof of this phenomenon (Theorem \ref{threarrange}), considering a symmetric, radially decreasing initial condition known as Schwarz rearrangement. However, when infected vectors are, on average, initially homogeneously positioned, a constant refuge is optimal (Theorem \ref{thconstant}). Since having a constant refuge seems biologically unrealistic, we provide a way to approach it through homogenization (Theorem \ref{thhomogen}).

Let $\Omega$ be an open bounded connected set with Lipschitz boundary $\partial\Omega$ in a Euclidean space $\mathbb{R}^d$. Below is the parabolic--ordinary differential system of interest.
{\small \begin{equation}\label{eqspring}
 \left\lbrace \begin{array}{lll}
 \partial_tI & = & \beta_{VH}(H(x)-I)V_i \\
 \partial_tV_i & = & \sigma_V\Delta V_i+\beta_{HV}IV_s-\alpha V_i-d_V(x)V_i-s_V(V_s+V_i)V_i-hPV_i \\
 \partial_tV_s & = & \sigma_V\Delta V_s-\beta_{HV}IV_s+\alpha V_i-d_V(x)V_s-s_V(V_s+V_i)V_s-hPV_s\\
 && +b_V(x)(V_s+V_i) \\
 \partial_t P & = & \sigma_P\nabla\cdot\left(r_P(x)\nabla\left(\dfrac{P}{r_P(x)}\right)\right)+\gamma h(V_s+V_i)P+r_P(x)P-s_PP^2
 \end{array}\right.
\end{equation}}
in $\Omega\times\mathbb{R}_+^*$, where we omitted the dependency on $(x,t)$ for $I$, $V_i$, $V_s$ and $P$.

Here, the notations $I$, $V_i$, $V_s$, $P$ stand respectively for the population density of infected hosts, infected vectors, susceptible vectors and predators. The various parameters are all biologically meaningful: $H\in\mathcal{C}^{2+\alpha}(\overline{\Omega},\mathbb{R}_+^*)$ is the total population of hosts, $\beta_{VH},\beta_{HV}>0$ are the transmission rates (from vectors to hosts and vice-versa), $\sigma_V,\sigma_P>0$ are the diffusion rates of the vectors and predators, respectively, $\alpha$ is the recovery rate of the vectors, $b_V,d_V\in\mathcal{C}^{2+\alpha}(\overline{\Omega},\mathbb{R}_+^*)$ are the birth rate and death rate of the vectors, respectively, $r_P\in\mathcal{C}^{2+\alpha}(\overline{\Omega},\mathbb{R}_+^*)$ is the Malthusian growth rate of the predators, $s_V,s_P>0$ are the saturation rates of vectors and predators, respectively (intraspecific competition for space or resources leading to logistic growth), $h>0$ is the predation rate and $\gamma>0$ is a coefficient measuring the efficiency of predation.

We supplement it with initial conditions:
\[
(I, V_i, V_s, P)(\cdot,0) = (0, V_{i,0}, V_{s,0}, P_0)\quad\text{in }\Omega,
\] 
where $V_{i,0}$, $V_{s,0}$, $P_0$ are nonnegative functions in $L^{\infty}(\overline{\Omega})$, and with ``no-flux'' boundary conditions:
\[
\dfrac{\partial V_i}{\partial n}=\dfrac{\partial V_s}{\partial n}=\dfrac{\partial (P/r_P)}{\partial n}=0\quad\text{on }\partial\Omega.
\]

Note that the condition $\partial_n(P/r_P)=0$ reduces to $\partial_n P=0$ under the biologically relevant assumption $\partial_n r_P=0$, that will be a standing assumption from now on. Hence, we actually have homogeneous Neumann boundary conditions for all three animal populations $V_i$, $V_s$ and $P$.

When $\Omega$ has a smooth boundary, by \cite{girardinmaucourt2023}, the system \eqref{eqspring} admits a unique classical solution defined in $\overline{\Omega}\times\mathbb{R}_+^*$. In our case, the well-posedness of this system is only a conjecture.

Let $R\in\mathcal{C}^{2+\alpha}(\overline{\Omega},[0,1])$. We will call the function $R$ the refuge. In the rest of the paper, we assume that the growth rate of the predators and the preys, and the population of hosts are of the form
\[
\begin{array}{l}
 r_P=(r_P^r-r_P^f)R+r_P^f\\
 b_V=(b_V^r-b_V^f)R+b_V^f\\
 d_V=(d_V^r-d_V^f)R+d_V^f\\
 r_V=b_V-d_V=(r_V^r-r_V^f)R+r_V^f\\
 H=H^0(1-R),
\end{array}
\]
with $r_P^r,r_P^f,b_V^r,b_V^f,d_V^r,d_V^f,r_V^r,r_V^f,H^0>0$.

With this set of notations and assumptions, our optimal control problem can be recasted as follows:

\textbf{Optimal control problem:}
Characterize the set of optimal $R$ such that the harvest of healthy beets $\eta=\lim_{t\to+\infty}\int_{\Omega} H-I(\cdot,t)=\int_{\Omega}He^{-\beta_{VH}\int_0^{+\infty}V_i(\cdot,t)\mathrm{d}t}$ is maximized.

It is important to highlight that in Section 4, dedicated to optimizing $\eta$, Theorem \ref{thschwarz} and \ref{corschwarz} require the shape of the refuge to reside within a subset of $\mathrm{L}^{\infty}(\Omega)$ for the sake of compactness. We acknowledge that this requirement somewhat sets this result apart from the rest of the paper. Nevertheless, given that the well-posedness of the system should not be an issue when the coefficients are in $\mathrm{L}^{\infty}(\Omega)$, we believe that this result remains appropriate.

\subsection{Organization of the paper}

In the following subsections, we introduce our notations and present our main results.
In Section 2, we investigate the continuity of the harvest and provide partial answers to the question.
Section 3 is dedicated to proving our primary homogenization result.
Finally, in Section 4, we establish our key findings regarding the location of optimizers based on the properties of $V_{i,0}$.

\subsection{Notations}

\begin{itemize}
 \item $V_0=V_{i,0}+V_{s,0}$ and $V=V_i+V_s$.
 \item $r_V=b_V-d_V$.
 \item $\overline{\mathcal{L}}=\nabla\cdot\left(r_P\nabla\left(\dfrac{\cdot}{r_P}\right)\right)$.
 \item $\mathcal{L}_{V_s}=-\sigma_V\Delta-r_V+h\dfrac{r_P}{s_P}$.
 \item $\lambda_1(\mathcal{L}_{V_s})$ the principal eigenvalue of the operator $\mathcal{L}_{V_s}$ with Neumann boundary conditions, associated with the positive principal eigenfunction $\varphi_{s,1}$ (with $\min_{\Omega}\varphi_{s,1}=1$):
 \[
  \left\lbrace \begin{array}{ll}
   \mathcal{L}_{V_s}(\varphi_{s,1})=-\sigma_V\Delta\varphi_{s,1}+(-r_V+h\dfrac{r_P}{s_P})\varphi_{s,1}=\lambda_1(\mathcal{L}_{V_s})\varphi_{s,1} & \mbox{ in }\Omega\\
   \dfrac{\partial\varphi_{s,1}}{\partial n}=0 & \mbox{ on }\partial\Omega
  \end{array}\right.
 \]
 \item $I_{\infty}:x\mapsto\lim_{t\to+\infty}I(x,t)$.
 \item The harvest $\eta=\int_{\Omega}H-I_{\infty}=\int_{\Omega}He^{-\beta_{VH}\int_0^{+\infty}V_i(\cdot,t)\mathrm{d}t}$.
 \item The linearized harvest $\eta_L$ (defined in Section 4).
 \item If $f$ is a function defined on $\Omega$, $\overline{f}=\max_{\overline{\Omega}}f$ and $\underline{f}=\min_{\overline{\Omega}}f$.
\end{itemize}

\subsection{Main result}

The idea of our main theorems is the following. The quantities $m$ and $\xi$ are defined in the beginning of section 4.1.

\begin{theorem}\label{thhomogen}
 When the frequency of the refuge goes to infinity, the harvest converges to the harvest of the homogenized system.
\end{theorem}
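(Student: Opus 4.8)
The plan is to read the statement as a periodic (or $H$-convergence) homogenization limit. Fix a reference refuge pattern $R_\#\in\mathcal C^{2+\alpha}$ valued in $[0,1]$, say $Y$-periodic, and set $R_\varepsilon(x)=R_\#(x/\varepsilon)$, so that the frequency of the refuge tends to infinity as $\varepsilon\to0$; let $(I_\varepsilon,V_{i,\varepsilon},V_{s,\varepsilon},P_\varepsilon)$ be the solution of \eqref{eqspring} with refuge $R_\varepsilon$ and $\eta_\varepsilon$ the associated harvest. Since the refuge enters $r_P,b_V,d_V,r_V$ and $H=H^0(1-R)$ affinely, each of these coefficients oscillates while remaining, uniformly in $\varepsilon$, in a fixed compact subinterval of $(0,+\infty)$, whereas $\sigma_V,\sigma_P$ do not move. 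The homogenized system is \eqref{eqspring} in which $b_V,d_V,r_V$ and $H$ are replaced by their weak-$*$ limits (equivalently, by the affine expressions evaluated at the cell average $m=\langle R_\#\rangle$), and $\sigma_P\overline{\mathcal L}$ is replaced by its homogenized counterpart, a constant-coefficient operator whose effective constant is the $\xi$ of Section 4.1. The goal is $\eta_\varepsilon\to\eta_{\mathrm{hom}}$, the harvest of that system, and the proof splits into three stages: (i) $\varepsilon$-uniform estimates, including a uniform exponential decay of the vectors; (ii) homogenization of the PDE system; (iii) passage to the limit in $\int_0^{+\infty}V_{i,\varepsilon}\,\mathrm{d}t$, hence in $\eta_\varepsilon$.

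\emph{Stage (i).} Comparison principles give $0\le I_\varepsilon\le H^0$ and $\varepsilon$-uniform $L^\infty(\Omega\times\mathbb R_+)$ bounds on $V_{i,\varepsilon},V_{s,\varepsilon},P_\varepsilon$. Using the logistic structure of the predator equation together with $\underline{r_{P,\varepsilon}}\ge\min(r_P^f,r_P^r)>0$, one shows $P_\varepsilon$ is eventually bounded below by a positive constant, with onset time uniform in $\varepsilon$; combined with the uniform sign condition $\liminf_\varepsilon\lambda_1(\mathcal L_{V_s,\varepsilon})>0$ (satisfied e.g.\ when $h\min(r_P^f,r_P^r)>s_P\max(r_V^f,r_V^r)$), integrating $\partial_tV_\varepsilon=\sigma_V\Delta V_\varepsilon+r_{V,\varepsilon}V_\varepsilon-s_VV_\varepsilon^2-hP_\varepsilon V_\varepsilon$ (with $V_\varepsilon=V_{i,\varepsilon}+V_{s,\varepsilon}$) over $\Omega$ yields exponential decay of $\|V_\varepsilon(\cdot,t)\|_{L^1(\Omega)}$, hence by parabolic smoothing for the constant operator $\sigma_V\Delta$ a bound $\|V_{i,\varepsilon}(\cdot,t)\|_{L^\infty(\Omega)}\le Ce^{-\delta t}$ with $C,\delta$ independent of $\varepsilon$; in particular $\int_0^{+\infty}V_{i,\varepsilon}\,\mathrm{d}t$ is $\varepsilon$-uniformly bounded in $L^\infty(\Omega)$. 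Testing the $V_i$- and $V_s$-equations gives $\varepsilon$-uniform bounds in $L^2_{\mathrm{loc}}(\mathbb R_+;H^1(\Omega))$ with time-derivatives in $L^2_{\mathrm{loc}}(\mathbb R_+;H^1(\Omega)')$; for the predator equation one first introduces $Q_\varepsilon=P_\varepsilon/r_{P,\varepsilon}$, for which $\partial_nQ_\varepsilon=0$ holds outright and $\overline{\mathcal L}(P_\varepsilon)=\nabla\cdot(r_{P,\varepsilon}\nabla Q_\varepsilon)$ is symmetric and uniformly coercive, and derives the analogous bounds on $Q_\varepsilon$.

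\emph{Stage (ii).} Aubin--Lions compactness gives, along a subsequence, $V_{i,\varepsilon}\to V_i$, $V_{s,\varepsilon}\to V_s$, $Q_\varepsilon\to Q$ strongly in $L^2_{\mathrm{loc}}(\mathbb R_+;L^2(\Omega))$, while $H_\varepsilon\rightharpoonup H^0(1-m)=:H_{\mathrm{hom}}$ and $b_{V,\varepsilon},d_{V,\varepsilon},r_{V,\varepsilon},r_{P,\varepsilon},r_{P,\varepsilon}^2$ converge weakly-$*$ in $L^\infty(\Omega)$ to their averages; the formula $I_\varepsilon=H_\varepsilon(1-e^{-\beta_{VH}\int_0^tV_{i,\varepsilon}})$ fixes the limit of $I_\varepsilon$. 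Every zeroth-order reaction term and every quadratic term is a product of a weakly-$*$ convergent factor and a strongly convergent one, so it passes to the limit by the weak--strong lemma ($f_\varepsilon\rightharpoonup f$ weakly-$*$ in $L^\infty$ and $g_\varepsilon\to g$ strongly in $L^1$ imply $\int f_\varepsilon g_\varepsilon\to\int fg$). The one genuinely homogenization-theoretic point is the flux $\nabla\cdot(r_{P,\varepsilon}\nabla Q_\varepsilon)$, whose limit is identified as $\nabla\cdot(A^*\nabla Q)$, $A^*$ the effective matrix of $r_{P,\varepsilon}\,\mathrm{Id}$, by the oscillating-test-function (div--curl) method or by periodic two-scale convergence, the Neumann problem requiring only the usual boundary-layer corrections. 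Collecting the limits shows $(I,V_i,V_s,P)$ with $P=\langle r_P\rangle Q$ solves the homogenized system; since that system has smooth, spatially constant coefficients it is well posed, so the limit is unique and the whole family converges (for merely Lipschitz $\Omega$, where well-posedness of \eqref{eqspring} is only conjectural, one argues with any available weak solutions, the homogenized system being unconditionally well posed).

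\emph{Stage (iii).} By the uniform decay of Stage (i), $\int_0^TV_{i,\varepsilon}\,\mathrm{d}t\to\int_0^{+\infty}V_{i,\varepsilon}\,\mathrm{d}t$ in $L^\infty(\Omega)$ uniformly in $\varepsilon$ as $T\to+\infty$, while for fixed $T$ one has $\int_0^TV_{i,\varepsilon}\,\mathrm{d}t\to\int_0^TV_i\,\mathrm{d}t$ in $L^1(\Omega)$ by Stage (ii); splitting $\int_0^{+\infty}=\int_0^T+\int_T^{+\infty}$ gives $\int_0^{+\infty}V_{i,\varepsilon}\,\mathrm{d}t\to\int_0^{+\infty}V_i\,\mathrm{d}t$ in $L^1(\Omega)$, hence a.e.\ along a subsequence and boundedly. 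Dominated convergence then gives $e^{-\beta_{VH}\int_0^{+\infty}V_{i,\varepsilon}\,\mathrm{d}t}\to e^{-\beta_{VH}\int_0^{+\infty}V_i\,\mathrm{d}t}$ strongly in $L^1(\Omega)$, so that, $H_\varepsilon$ converging weakly-$*$ in $L^\infty(\Omega)$, one last use of the weak--strong lemma yields
\[
\eta_\varepsilon=\int_\Omega H_\varepsilon\,e^{-\beta_{VH}\int_0^{+\infty}V_{i,\varepsilon}\,\mathrm{d}t}\;\longrightarrow\;\int_\Omega H_{\mathrm{hom}}\,e^{-\beta_{VH}\int_0^{+\infty}V_i\,\mathrm{d}t}=\eta_{\mathrm{hom}}.
\]
I expect the main obstacle to lie in the $\varepsilon$-uniformity of Stage (i) — the uniform eventual lower bound on $P_\varepsilon$ and the ensuing uniform exponential decay of $V_{i,\varepsilon}$, which is exactly what allows the limits $t\to+\infty$ and $\varepsilon\to0$ to be interchanged — together with the homogenization of $\overline{\mathcal L}_\varepsilon$ in Stage (ii), whose expanded form $\nabla\cdot(r_{P,\varepsilon}\nabla(\cdot/r_{P,\varepsilon}))(u)=\Delta u-\nabla\cdot(u\,\nabla\ln r_{P,\varepsilon})$ is a singular, large gradient-drift problem that becomes a standard divergence-form homogenization only after the weighting $Q_\varepsilon=P_\varepsilon/r_{P,\varepsilon}$. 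Everything downstream is then soft analysis.
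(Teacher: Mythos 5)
Your proposal follows essentially the same route as the paper: finite-time compactness via energy estimates and Aubin--Lions, the substitution $Q=P/r_P$ to reduce the ideal-free-dispersal operator to a symmetric divergence-form problem homogenized by two-scale/oscillating-test-function arguments, a uniform-in-frequency exponential decay of $V_i$ (under $\lambda_1(\mathcal{L}_{V_s})>0$) to control the time tail, and then dominated convergence plus a weak--strong product argument for $\int_\Omega H\,e^{-\beta_{VH}\int_0^{+\infty}V_i}$. The only notable difference is that you re-derive the uniform decay from the logistic/predator structure under a pointwise sign condition, whereas the paper obtains it from the linear estimates of the companion work together with the monotonicity of $n\mapsto\lambda_1^n(\mathcal{L}_{V_s})$; this is a cosmetic rather than structural divergence.
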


\begin{theorem}\label{threarrange}
 If $V_{i,0}$ is symmetric and radially decreasing, the optimal refuge of the linearized harvest is not constant in space, and there exists a symmetric, radially decreasing refuge better than the constant one.
\end{theorem}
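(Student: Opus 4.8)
The plan is to prove both assertions at once by a first‑variation argument anchored at the best constant refuge. We take $\Omega$ to be a ball (as is implicit in ``symmetric, radially decreasing'') and assume $V_{i,0}$ is non‑constant; by compactness there is $R_c\in[0,1]$ maximizing $\eta_L$ over constant refuges. It suffices to produce an admissible, symmetric, radially decreasing $R$ with $\eta_L(R)>\eta_L(R_c)$: since $\eta_L(R_c)$ is the supremum of $\eta_L$ over constants, such an $R$ shows at once that this supremum is not attained at a constant and that a radially decreasing refuge beats every constant one.

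First I would compute the first variation $\frac{d}{dt}\eta_L(R_c+t\psi)\big|_{t=0}$ for a perturbation $\psi\in\mathcal{C}^{2+\alpha}(\overline\Omega)$ to be chosen strictly radially decreasing and nonnegative, using the representation of $\eta_L$ in terms of the quantities $m$ and $\xi$ from the start of Section~4.1. Differentiating the elliptic problem governing $m$ and pairing with its adjoint state turns this variation into $\int_\Omega\mathcal{G}\,\psi$, where the density $\mathcal{G}$ is built explicitly from $m$, $\xi$ and the (constant) coefficients at $R=R_c$. At a constant refuge the linearized problem has constant coefficients and is rotationally invariant, so $m$ and $\xi$ are radial; moreover, since $V_{i,0}$ is radially decreasing, the maximum principle for the relevant Neumann elliptic operator on the ball forces $m$ to be radial and nonincreasing in $|x|$, and one checks that $\mathcal{G}$ inherits this structure. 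Finally, optimality of $R_c$ among constants gives $\int_\Omega\mathcal{G}=0$ (this being the derivative of $\eta_L$ in the constant direction, the best constant being interior, $0<R_c<1$); thus $\mathcal{G}$ has zero mean, is not identically zero because $V_{i,0}$ is non‑constant, and is therefore positive near the center of $\Omega$ and negative near $\partial\Omega$.

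It then remains to convert this into an actual gain, which is a rearrangement step. For nonnegative $\psi$ the layer‑cake formula gives $\int_\Omega\mathcal{G}\psi=\int_0^{\infty}\big(\int_{\{\psi>s\}}\mathcal{G}\big)\,ds$, and since $\psi$ is radially decreasing each super‑level set $\{\psi>s\}$ is a ball $B_{r(s)}$ centered at the origin; because $\mathcal{G}$ is radial, nonincreasing and of zero mean, one has $\int_{B_r}\mathcal{G}\ge0$ for every $r$, with strict inequality on an interval of radii of positive length, whence $\int_\Omega\mathcal{G}\psi>0$ for $\psi$ strictly radially decreasing. Consequently, for $\varepsilon>0$ small enough that $R_c+\varepsilon\psi\in\mathcal{C}^{2+\alpha}(\overline\Omega,[0,1])$, we get $\eta_L(R_c+\varepsilon\psi)=\eta_L(R_c)+\varepsilon\int_\Omega\mathcal{G}\psi+o(\varepsilon)>\eta_L(R_c)$, with $R_c+\varepsilon\psi$ admissible, symmetric and radially decreasing; this proves Theorem~\ref{threarrange}. (An alternative route would combine the uniqueness of the $\eta_L$‑optimizer with a Schwarz‑rearrangement inequality, but the perturbative argument only requires symmetrizing the fixed state $m$ at a constant refuge.)

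The hard part will be the second step — establishing the sign and monotonicity structure of the first‑variation density $\mathcal{G}$ — since that is exactly where the hypothesis on $V_{i,0}$ is used. It requires a symmetrization statement for the relevant Neumann elliptic problem on the ball (the radial monotonicity of $m$), together with control of the adjoint contribution to $\mathcal{G}$, which is more delicate if the linearized dynamics genuinely couple $I$, $V_i$ and $V_s$ rather than reducing to a scalar equation for $m$; what is really needed, weaker than full monotonicity, is that $\mathcal{G}$ be radial with $\int_{B_r}\mathcal{G}\ge0$ for all $r$. One must also settle the non‑degeneracy invoked above, namely that the optimal constant is interior ($0<R_c<1$, which can fail only in a regime of negligible infection) and that $\mathcal{G}\not\equiv0$, i.e.\ that $V_{i,0}$ is genuinely non‑constant.
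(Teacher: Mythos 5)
Your overall skeleton --- a first-variation argument at a constant refuge, with a radially decreasing perturbation whose gain is extracted from the radial monotonicity of $V_{i,0}$ --- is the same as the paper's, but your implementation is genuinely different: you anchor at the \emph{unconstrained} best constant $R_c$, introduce an adjoint state to write the variation as $\int_\Omega\mathcal{G}\psi$, use criticality of $R_c$ to get $\int_\Omega\mathcal{G}=0$, and conclude by a layer-cake/bathtub argument. The paper instead works inside the mass-constrained class $\mathcal{M}_{|\Omega|\mathcal{R}}(\Omega)$ (fixed $\int_\Omega R$), perturbs an \emph{arbitrary} constant $\mathcal{R}>0$ by the zero-mean, radially decreasing direction $\tilde R(x)=\cos(\pi x/L)$, solves the linearized equation explicitly (the cosine is a Neumann eigenfunction, so $u$ is an explicit multiple of $\cos(\pi x/L)$), and gets $\int_\Omega uV_{i,0}<0$ from the antisymmetry of $u$ about $L/2$ combined with the monotonicity of $V_{i,0}$ --- no adjoint state, no symmetrization lemma, and no optimality of the constant is needed.

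This difference is where your proposal has two concrete gaps. First, the formal statement (Theorem \ref{thschwarz}) compares refuges of \emph{fixed} $L^1$-mass: your perturbation $\psi\ge 0$ strictly increases $\int_\Omega R$, so $R_c+\varepsilon\psi\notin\mathcal{M}_{|\Omega|\mathcal{R}}(\Omega)$ and you are proving a different (unconstrained) statement. Second, and more seriously, your zero-mean identity $\int_\Omega\mathcal{G}=0$ requires $0<R_c<1$; by the paper's own Theorem \ref{thconstant} the optimal constant is $\bigl(\bigl(\sqrt{\beta_{VH}(\xi+m)\mathbb{E}[V_{i,0}]}-\xi\bigr)/m\bigr)^+$, which equals $0$ whenever $\beta_{VH}(\xi+m)\mathbb{E}[V_{i,0}]<\xi^2$. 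In that (biologically common, low-infection) regime $\int_\Omega\mathcal{G}\le 0$, the sign of $\int_{B_r}\mathcal{G}$ near the boundary is lost, and indeed your version of the conclusion (``the global optimizer over $[0,1]$-valued refuges is nonconstant'') can simply be false, with $R\equiv 0$ optimal. The mass constraint is precisely what makes the paper's theorem unconditional for every $\mathcal{R}>0$, and the zero-mean cosine perturbation is what lets one dispense with criticality of the constant. Finally, your key structural claim --- that the adjoint state $w$ solving $-\sigma_V\Delta w+(\xi+mR_c)w=V_{i,0}$ with Neumann conditions is radial and nonincreasing, so that $\mathcal{G}=-H^0+Cw$ is too --- is correct and provable (differentiate in $r$ and apply the maximum principle to $\partial_r w$ with a positive zeroth-order coefficient), but it is only asserted in your write-up; note also that $m$ and $\xi$ are fixed scalars in the paper, not spatial states governed by an elliptic problem, so the object you need to symmetrize is this adjoint $w$ (equivalently $\partial_R\varphi_R$), not ``$m$''.
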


\begin{theorem}\label{thconstant}
 If (the mean of) $V_{i,0}$ is constant, the optimal refuge of (the mean of) the linearized harvest is constant and equal to $\left(\dfrac{\sqrt{\beta_{VH}(\xi+m)\mathbb{E}[V_{i,0}]}-\xi}{m}\right)^+$.
\end{theorem}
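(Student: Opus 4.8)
The plan is to reduce the maximization of the mean linearized harvest to an elementary scalar optimization, exploiting that $\eta_L$ is affine in $V_{i,0}$ and that the elliptic problem defining it carries a free integral identity. As in Section 4, write $\eta_L(R)=H^0\int_\Omega(1-R)\bigl(1-\beta_{VH}\phi_R\bigr)$, where $\phi_R$ solves the Neumann problem $-\sigma_V\Delta\phi_R+(mR+\xi)\phi_R=V_{i,0}$ in $\Omega$. Since $\phi_R$ is a linear function of the datum $V_{i,0}$, taking means gives $\mathbb{E}[\eta_L](R)=H^0\int_\Omega(1-R)(1-\beta_{VH}\psi_R)$, where $\psi_R$ now solves $-\sigma_V\Delta\psi_R+(mR+\xi)\psi_R=\mathbb{E}[V_{i,0}]=:\mu$, a positive constant. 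Under the standing sign assumptions ($mR+\xi>0$ on $[0,1]$ and $m>0$), the strong maximum principle gives $\psi_R\geq c_0>0$, so every manipulation below is legitimate for $R\in\mathcal{C}^{2+\alpha}(\overline\Omega,[0,1])$.

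Next I would integrate the equation for $\psi_R$ over $\Omega$; the no-flux condition kills the Laplacian, leaving $m\int_\Omega R\psi_R+\xi\int_\Omega\psi_R=\mu|\Omega|$. Feeding this into the expansion $\mathbb{E}[\eta_L]/H^0=\int_\Omega(1-R)-\beta_{VH}\bigl(\int_\Omega\psi_R-\int_\Omega R\psi_R\bigr)$ eliminates the coupling term $\int_\Omega R\psi_R$ and produces
\[
\frac{\mathbb{E}[\eta_L](R)}{H^0}=|\Omega|+\frac{\beta_{VH}\mu|\Omega|}{m}-\Bigl(\int_\Omega R+K\int_\Omega\psi_R\Bigr),\qquad K:=\frac{\beta_{VH}(m+\xi)}{m}>0 .
\]
So maximizing the mean linearized harvest is equivalent to minimizing $g(R):=\int_\Omega R+K\int_\Omega\psi_R$.

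The core is a lower bound for $g$. Dividing the pointwise identity $-\sigma_V\Delta\psi_R+(mR+\xi)\psi_R=\mu$ by $\psi_R$ and integrating, an integration by parts (again using $\partial_n\psi_R=0$) gives $\int_\Omega(mR+\xi)=\mu\int_\Omega\psi_R^{-1}+\sigma_V\int_\Omega|\nabla\psi_R|^2\psi_R^{-2}\geq\mu\int_\Omega\psi_R^{-1}$, while Cauchy--Schwarz gives $\int_\Omega\psi_R^{-1}\int_\Omega\psi_R\geq|\Omega|^2$. Writing $A:=\int_\Omega\psi_R$, this means $m\int_\Omega R+\xi|\Omega|\geq\mu|\Omega|^2/A$, and the arithmetic--geometric mean inequality yields
\[
g(R)\geq\frac{\mu|\Omega|^2}{mA}-\frac{\xi|\Omega|}{m}+KA\geq\frac{2|\Omega|}{m}\sqrt{\beta_{VH}(m+\xi)\mu}-\frac{\xi|\Omega|}{m},
\]
using $mK=\beta_{VH}(m+\xi)$. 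Tracking equality: it forces both $\psi_R$ to be constant (Cauchy--Schwarz equality, together with vanishing of the Dirichlet-type term) and $A$ to hit the AM--GM value, hence $\psi_R\equiv\sqrt{\mu/(mK)}$, and then the equation forces $mR+\xi\equiv\sqrt{\beta_{VH}(m+\xi)\mu}$, i.e. $R\equiv c^\star:=\bigl(\sqrt{\beta_{VH}(\xi+m)\mu}-\xi\bigr)/m$; conversely $R\equiv c^\star$ saturates every inequality. This identifies $c^\star$ as the unique maximizer whenever $c^\star\in[0,1]$. When $c^\star<0$, the same chain, combined with the monotonicity $R\mapsto\psi_R$ (decreasing, by the comparison principle) and the convexity of $A\mapsto\mu|\Omega|^2/(mA)+KA$, shows that $g$ is minimized (only) at $R\equiv0$ --- which is exactly the positive part in the statement.

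The main obstacle is not any individual inequality but assembling the correct reduced functional: making precise, from Section 4, the definition of $\eta_L$ and of $m,\xi$; checking that $\mathbb{E}[\eta_L]$ really collapses to the deterministic functional with constant datum $\mu=\mathbb{E}[V_{i,0}]$ (this is where the affinity of $\eta_L$ in $V_{i,0}$ is used in an essential way); and confirming that the ambient biological assumptions force $mR+\xi>0$ on $[0,1]$ together with $m>0$ (and, in the regime where linearization is meaningful, $c^\star\leq1$), so that the monotonicity/convexity invoked in the boundary case --- and the reading of the answer as a genuine maximum --- are valid. A secondary, routine point is to carry enough regularity for $\psi_R$ (e.g. $\psi_R\in W^{2,p}$ with $\psi_R$ bounded away from $0$) to license the pointwise division by $\psi_R$ and the two integrations by parts.
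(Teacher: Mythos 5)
Your argument is correct, and it takes a genuinely different route from the paper's. The paper first invokes Lemma~\ref{concavitylinharvest} (concavity and coercivity of $R\mapsto\eta_L(R,V_{i,0})$) to reduce the problem to the unique critical point, then solves the Euler--Lagrange equation: from $\partial_R\mathbb{E}[\eta_L(R,V_{i,0})]\cdot\zeta=0$ for all $\zeta$ it deduces that the directional derivative $\partial_R\varphi_R\cdot\zeta$ equals the constant $-m/(\beta_{VH}(\xi+m)\mathbb{E}[V_{i,0}])$, reads off from its PDE that $\varphi_R=(\xi+mR)/(\beta_{VH}(\xi+m)\mathbb{E}[V_{i,0}])$, and plugs this back into the equation for $\varphi_R$ to get a nonlinear Neumann problem for $R$ whose unique solution is the constant $c^\star$; the truncation at $0$ is handled by a sign check on $\partial_R\mathbb{E}[\eta_L(0,V_{i,0})]$. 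You bypass both the concavity lemma and the Banach-space differential calculus entirely: your reduced functional $g(R)=\int_\Omega R+K\int_\Omega\psi_R$ agrees with the paper's $\eta_L$ up to an additive constant and a sign (by self-adjointness, $\int_\Omega\varphi_R V_{i,0}=\int_\Omega\phi_R$, and your integrated identity reproduces the paper's closed form), and the chain ``divide by $\psi_R$, integrate by parts, Cauchy--Schwarz, AM--GM'' gives a global lower bound whose equality case pins down $R\equiv c^\star$ directly; the boundary case $c^\star<0$ follows from the monotonicity of $h(A)=\mu|\Omega|^2/(mA)-\xi|\Omega|/m+KA$ on the admissible range $A\le\mu|\Omega|/\xi$, where one checks $h(\mu|\Omega|/\xi)=g(0)$. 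What your approach buys is an elementary, self-contained, quantitative proof of global optimality with built-in uniqueness; what the paper's buys is a systematic first-order characterization that also yields the converse remark ($R$ constant forces $V_{i,0}$ constant at critical points). Two small points to align with the paper: the theorem is stated with the maximum over $\mathcal{C}^{2+\alpha}_b(\Omega,\mathbb{R})$, while your inequalities (positivity of $\psi_R$, coercivity of $-\sigma_V\Delta+\xi+mR$) require $\xi+mR>0$, so you are effectively optimizing over nonnegative (or $[0,1]$-valued) refuges --- but the paper's own treatment of the truncated case makes the same implicit restriction, so this is a discrepancy in the paper's statement rather than a gap in your proof; and the condition $c^\star\le 1$ that you flag is indeed not imposed in the paper, which does not truncate above.
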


The formal context and notations will be given in Section 3 and 4.

Our homogenization analysis uses an ansatz technique developed by Murat and Tartar \cite{murattartar1977}, which effectively handles the non-trivial aspects of the ideal free dispersal term in the predator equation. The remaining portion of the proof relies on favorable estimates of the solution and the application of the Aubin-Lions theorem on $\Omega\times[0,T]$, and linear estimates previously conducted in \cite{girardinmaucourt2023}, which ensures the convergence of the complete time-integral of infected vectors.

The work performed on a Schwarz rearranged initial condition involves the utilization of Talenti inequalities \cite{talenti2016}, a case of strict inequality in the extended Hardy–Littlewood inequalities \cite{hajaiej2004}, and the Pólya–Szegő inequality \cite{talenti1994}.

\section{Harvest around $\lambda_1(\mathcal{L}_{V_s})=0$}

This section deals with the continuity of the harvest $\eta$ with respect to $\lambda_1(\mathcal{L}_{V_s})$. In \cite{girardinmaucourt2023}, it is claimed that $\eta$ is null when $\lambda_1(\mathcal{L}_{V_s})$ is negative, and positive when $\lambda_1(\mathcal{L}_{V_s})$ is positive. It is trivial that $\lim_{\lambda_1(\mathcal{L}_{V_s})\to0^-}\eta=0$. In this section, we establish that the harvest is null when $\lambda_1(\mathcal{L}_{V_s})=0$ in a simpler scenario, specifically when $P=r_P/s_P$. If this seems to be a costly assumption regarding the behaviour of the predators, it is, in fact, not: we can arrive at this new system by setting $\gamma=0$ and $P_0=r_P/s_P$ in system \ref{eqspring}. We leave open the question regarding the behaviour of $\eta$ when $\lambda_1(\mathcal{L}_{V_s})$ approaches $0$ from the right. We conjecture that the harvest should be continuous with respect to $\lambda_1(\mathcal{L}_{V_s})$, therefore the right limit should be $0$. To support this conjecture, we examine a simpler case when $P=r_P/s_P$ and $-r_V+h\dfrac{r_P}{s_P}$ uniformly converges to $0$. For the complete prey-predator system, the maximum principle does not seem to lead anywhere, since the predation term in the prey equation lacks the correct sign, and the best predator supersolution we could obtain is of the form
\[
 \overline{P}(t)=\dfrac{\exp\left(t-\dfrac{e^{-\lambda_1(\mathcal{L}_{V_s})t}}{\lambda_1(\mathcal{L}_{V_s})}\right)}{1+\int_0^t\exp\left(s-\dfrac{e^{-\lambda_1(\mathcal{L}_{V_s})s}}{\lambda_1(\mathcal{L}_{V_s})}\right)\mathrm{d}s},
\]
which, while it can be integrated in time, appears to be inadequate in providing insights to the behaviour of $\int_0^{+\infty}V_i$ when $\lambda_1(\mathcal{L}_{V_s})$ approaches $0$. Moreover, when the refuge is not constant, our control over $\lambda_1(\mathcal{L}_{V_s})$ is not explicit, necessitating linear analysis to link this principal eigenvalue to $V_i$. This, again, does not seem to yield any conclusive results regarding the limit of $\int_0^{+\infty}V_i$ around $\lambda_1(\mathcal{L}_{V_s})=0$. Even numerically, as 1) the harvest $\eta$ can only be approximated by computing $\int_0^TV_i$ for a large $T$ and 2) $V_i$ seems to converge to $0$ at a rate similar to $1/t$ when $\lambda_1(\mathcal{L}_{V_s})$ is close to $0$, the continuity of $\eta$ is not clear at all. Some remarks on the numerical behaviour of $\eta$ around $\lambda_1(\mathcal{L}_{V_s})=0$ can be found in Section 4 (see Figure \ref{harvestfunofmu}).

\begin{proposition}
 Consider the system
 
{\small\begin{equation}\label{eqconstpred}
 \begin{cases}
 \partial_tI & =\beta_{VH}(H-I)V_i \\
 \partial_tV_i & =\sigma_V\Delta V_i+\beta_{HV}IV_s-\alpha V_i-d_VV_i-s_V(V_s+V_i)V_i-h\dfrac{r_P}{s_P}V_i \\
 \partial_tV_s & =\sigma_V\Delta V_s-\beta_{HV}IV_s+\alpha V_i-d_VV_s-s_V(V_s+V_i)V_s-h\dfrac{r_P}{s_P}V_s+b_V(V_s+V_i)
 \end{cases}
\end{equation}}

with nonnegative initial condition in $\mathrm{L}^{\infty}(\overline{\Omega})$ and Neumann boundary conditions (i.e system \eqref{eqspring} with the predators replaced by $r_P/s_P$), with its associated harvest $\eta$. Suppose that $V_{i,0}\not\equiv 0$ and $\lambda_1(\mathcal{L}_{V_s})=0$.

Then, $\eta=0$.
\end{proposition}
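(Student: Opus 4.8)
The plan is to show that $\eta = \int_\Omega H e^{-\beta_{VH}\int_0^\infty V_i}$ equals $\int_\Omega H$, which is equivalent to showing $\int_0^{+\infty} V_i(x,t)\,\mathrm{d}t = 0$ for a.e.\ $x$; since $V_i \geq 0$ and $V_{i,0}\not\equiv 0$, by the strong maximum principle $V_i(\cdot,t) > 0$ for all $t>0$, so the statement is really that $\int_0^{+\infty}V_i$ diverges everywhere, i.e.\ $V_i$ does not decay fast enough to be integrable in time. First I would work with the total vector population $V = V_i + V_s$, which satisfies $\partial_t V = \sigma_V \Delta V + (b_V - d_V)V - \alpha_{\text{eff}}\cdots$; more precisely, adding the two $V$-equations in \eqref{eqconstpred} the cross terms $\pm\beta_{HV}IV_s$ and $\pm\alpha V_i$ cancel, giving $\partial_t V = \sigma_V\Delta V + r_V V - s_V V^2 - h\frac{r_P}{s_P}V = \sigma_V\Delta V - (\mathcal{L}_{V_s}-(-\sigma_V\Delta))V - s_V V^2$, i.e.\ $\partial_t V + \mathcal{L}_{V_s}V = -s_V V^2$ with Neumann conditions. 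The hypothesis $\lambda_1(\mathcal{L}_{V_s}) = 0$ is exactly what makes the linear part critical: projecting onto the principal eigenfunction $\varphi_{s,1}$ of the adjoint operator, one expects $V$ to behave like the solution of a scalar logistic ODE with zero linear rate, $\dot y = -c y^2$, whose solution decays like $1/t$ — not time-integrable.

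The key steps, in order: (1) Establish global existence, nonnegativity, and an $L^\infty$ a priori bound for $(I,V_i,V_s)$, together with $V_i,V_s>0$ for $t>0$ (parabolic strong maximum principle), so all quantities below are well-defined. (2) Derive the closed scalar inequality for $V$: from $\partial_t V + \mathcal{L}_{V_s} V = -s_V V^2 \geq -s_V \overline{V_0'} V$ for a suitable bound is too crude; instead test the $V$-equation against the adjoint principal eigenfunction $\psi$ (the principal eigenfunction of $\mathcal{L}_{V_s}^*$, positive, with $\int_\Omega \psi = 1$) to get, writing $Y(t) = \int_\Omega V(\cdot,t)\psi$,
\[
Y'(t) = -\lambda_1(\mathcal{L}_{V_s})\,Y(t) - s_V\int_\Omega V^2\psi = -s_V\int_\Omega V^2\psi \leq -s_V\Big(\tfrac{\min\psi}{?}\Big)\cdots,
\]
and by Jensen/Cauchy–Schwarz with the weight $\psi$, $\int_\Omega V^2\psi \geq (\int_\Omega V\psi)^2/\int_\Omega \psi = Y^2$, hence $Y' \leq -s_V Y^2$, giving the \emph{upper} bound $Y(t) \leq \frac{1}{s_V t + Y(0)^{-1}} = O(1/t)$ — but for non-integrability of $V_i$ I need a \emph{lower} bound. (3) For the lower bound, use that $V^2 \leq \overline{V}(t) V$ where $\overline{V}(t) = \|V(\cdot,t)\|_{L^\infty(\Omega)}$, which by step (1) and the $1/t$ upper decay (propagated to $L^\infty$ via parabolic regularity / comparison with the spatially constant supersolution of the logistic equation, or a Harnack-type argument) satisfies $\overline{V}(t) \leq C/t$ for large $t$; then $Y'(t) \geq -s_V \overline{V}(t) Y(t) \geq -\frac{C'}{t}Y(t)$, so $Y(t) \geq Y(T_0)\exp(-C'\int_{T_0}^t \frac{\mathrm{d}s}{s}) = Y(T_0)(T_0/t)^{C'}$. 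This alone is not quite enough unless $C'<1$; to get the sharp rate one should combine the matching upper and lower bounds to conclude $Y(t) \asymp 1/t$, whence $\int^\infty Y = \infty$. (4) Finally transfer non-integrability of $Y$ (hence of $V$) to non-integrability of $V_i$: near $t\to\infty$ the fast recovery/infection dynamics equilibrate, and one shows $V_i(\cdot,t) \geq \delta V(\cdot,t)$ for $t$ large (since $I(\cdot,t)$ is monotone increasing and bounded, hence converges, so the ratio $V_i/V_s$ is asymptotically governed by the constant-coefficient subsystem and stays bounded away from $0$); therefore $\int_0^\infty\int_\Omega V_i\psi = \infty$, forcing $\int_0^\infty V_i(x,\cdot) = \infty$ on a set of positive measure, and by the strong maximum principle on every compact subset, so $\eta = \int_\Omega H$, i.e.\ $I_\infty \equiv 0$ and $\eta$ as defined ($\int_\Omega H - I_\infty$) — wait, with the normalization $\eta = \int_\Omega H - I_\infty$ the claim "$\eta=0$" would need $I_\infty = H$; re-reading, $\eta = \int_\Omega H e^{-\beta_{VH}\int_0^\infty V_i}$ and $\eta=0$ means exactly $\int_0^\infty V_i = +\infty$ a.e., which is what the above establishes.

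The main obstacle I anticipate is step (3)–(4): obtaining a \emph{matching lower bound} of order $1/t$ for $V$ (the critical-logistic decay rate) rather than merely some polynomial lower bound, and controlling the spatial profile so that the weighted quantity $Y(t)$ genuinely decays like $1/t$ and not faster. The upper bound $O(1/t)$ is routine (supersolution by the spatially homogeneous logistic equation with rate $0$, using $\lambda_1 = 0$ and $r_V - h r_P/s_P$ having zero principal eigenvalue), but the lower bound requires a quantitative Harnack inequality or a careful energy argument to prevent the mass from concentrating and decaying superlinearly; since $\mathcal{L}_{V_s}$ is self-adjoint in a weighted space, the spectral gap above $\lambda_1=0$ should force exponential relaxation of the shape to $\varphi_{s,1}$, reducing the problem to the scalar ODE $\dot Y = -s_V(\int \varphi_{s,1}^2\psi)Y^2 + o(Y^2)$, whose solution is $\sim 1/t$ — making $\int^\infty V_i = \infty$ and hence $\eta = 0$.
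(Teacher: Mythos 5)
Your overall strategy is the right one and matches the paper's skeleton: reduce $\eta=0$ to $\int_0^{+\infty}V_i(x,\cdot)=+\infty$, pass to $V=V_i+V_s$, which satisfies the closed logistic equation $\partial_tV-\sigma_V\Delta V=(r_V-h r_P/s_P)V-s_VV^2$, and exploit $\lambda_1(\mathcal{L}_{V_s})=0$ to get critical $1/t$ decay. But the step you yourself flag as the ``main obstacle'' --- a \emph{lower} bound on $V$ of order $1/t$ --- is precisely where the argument is not closed, and neither of your proposed fixes closes it. The Gronwall bound $Y(t)\geq Y(T_0)(T_0/t)^{C'}$ is inconclusive because the only a priori control on $\|V(\cdot,t)\|_{\infty}$ comes from the supersolution and yields $C'=2\overline{\varphi_{s,1}}\geq 2$, which is perfectly compatible with $Y$ being integrable; and the ``spectral gap forces relaxation of the shape to $\varphi_{s,1}$'' route is only sketched and would require a genuine center-manifold-type analysis to make rigorous. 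The elementary device you are missing is a pointwise subsolution built from the principal eigenfunction: since $\lambda_1(\mathcal{L}_{V_s})=0$ one has $-\sigma_V\Delta\varphi_{s,1}=(r_V-h r_P/s_P)\varphi_{s,1}$, so $\underline{V}(x,t)=c\,\varphi_{s,1}(x)/(t+1)$ satisfies $\partial_t\underline{V}-\sigma_V\Delta\underline{V}=(r_V-h r_P/s_P)\underline{V}-\underline{V}/(t+1)\leq(r_V-h r_P/s_P)\underline{V}-s_V\underline{V}^2$ as soon as $c\leq 1/(s_V\overline{\varphi_{s,1}})$, with $c$ also taken small enough to lie below $\inf_\Omega V(\cdot,1)>0$ at the starting time. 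Comparison then gives $V(x,t)\geq c\,\varphi_{s,1}(x)/(t+1)$ for \emph{every} $x$, hence $\int_1^{+\infty}V(x,t)\,\mathrm{d}t=+\infty$ pointwise, with no passage through a ``set of positive measure.'' (The same ansatz with $c=2/s_V$ is how the paper gets the matching $O(1/t)$ upper bound you also invoke.)

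The transfer from $V$ to $V_i$ in your step (4) is likewise too vague to stand. The clean route, which is the paper's: since $V_{i,0}\not\equiv0$, $I$ is nondecreasing and satisfies $I\geq C>0$ on $\Omega\times[1,+\infty)$, so the $V_i$-equation yields $\partial_tV_i-\sigma_V\Delta V_i+\overline{C}V_i\geq\beta_{HV}CV$ for a uniform constant $\overline{C}$; integrating over $\Omega\times[1,T]$ shows that $\int_\Omega\int_1^TV_i$ diverges with $\int_\Omega\int_1^TV$, and the uniform time-integrated Harnack inequality (Huska) converts this spatially averaged divergence into pointwise divergence of $\int_1^TV_i(x,\cdot)$ at every $x$. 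Your appeal to the strong maximum principle cannot perform this last localization: positivity of $V_i(\cdot,t)$ for each $t$ does not prevent $\int_0^{+\infty}V_i(x,\cdot)$ from being finite at some points and infinite at others, and $\eta=0$ requires divergence almost everywhere. Your heuristic that $V_i\geq\delta V$ for large $t$ because the ratio $V_i/V_s$ equilibrates is plausible but is not an argument; the integrated differential inequality above replaces it.
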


\begin{proof}
 Recall that
 \[
  \eta=\int_{\Omega}He^{-\beta_{VH}\int_0^{+\infty}V_i(\cdot,t)\mathrm{d}t}.
 \]
 
 It is sufficient to prove that the integral $\int_0^{+\infty}V_i(x,t)$ does not converge for any $x\in\Omega$. Let us first deal with the convergence of $V_i$ to $0$ when $t$ goes to infinity.
 
 By summing the last two equations of \eqref{eqconstpred}, $V$ solves the equation
 \begin{equation}\label{preyeq}
  \partial_tV-\sigma_V\Delta V =(r_V-h\dfrac{r_P}{s_P})V-s_VV^2
 \end{equation}
 for $(x,t)\in\Omega\times\mathbb{R}_+^*$, with Neumann boundary conditions and initial condition $V(\cdot,0)=V_0$.
 
 Let $\varphi_{s,1}$ be the positive principal eigenvalue associated with $\lambda_1(\mathcal{L}_{V_s})$, and let $\overline{V}=\dfrac{2\varphi_{s,1}}{s_V(t+1)}$. Because $\lambda_1(\mathcal{L}_{V_s})$ is null, $-\Delta\varphi_{s,1}=(r_V-h\dfrac{r_P}{s_P})\varphi_{s,1}$, hence $\overline{V}$ solves
 \[
  \partial_t\overline{V}-\sigma_V\Delta\overline{V}=\left(r_V-h\dfrac{r_P}{s_P}\right)\overline{V}-\dfrac{1}{t+1}\overline{V}\geq\left(r_V-h\dfrac{r_P}{s_P}\right)\overline{V}-s_V\overline{V}^2
 \]
 and is then a supersolution of the Cauchy--Neumann system \eqref{preyeq} solved by $V$ (recall that $\min_{\Omega}\varphi_{s,1}=1$). We conclude that, for all $x\in\Omega$ and $t>0$,
 \[
  V_i(x,t)\leq V(x,t)\leq\overline{V}(x,t)\leq\dfrac{2\overline{\varphi_{s,1}}}{s_V(t+1)}\underset{t\to+\infty}{\longrightarrow}0,
 \]
 hence the uniform convergence of $V_i$ to $0$.
 
 $V_i$ solves, in $\Omega\times\mathbb{R}_+^*$,
\[
 \partial_tV_i-\sigma_V\Delta V_i=\beta_{HV}I(V-V_i)-\alpha V_i-d_VV_i-s_VVV_i-h\dfrac{r_P}{s_P}V_i,
\]
 hence
 \[
  \partial_tV_i-\sigma_V\Delta V_i+(\beta_{HV}I+\alpha+d_V+s_VV+h\dfrac{r_P}{s_P})V_i=\beta_{HV}IV.
 \]
 Denote $\overline{C}=\beta_{HV}\overline{H}+\alpha+\overline{d_V}+s_V\overline{V}+h\dfrac{\overline{r_P}}{s_P}>0$ (we used the global boundedness of $V$ in $\Omega\times\mathbb{R}_+^*$ proved in \cite{girardinmaucourt2023}).
 Because $V_{i,0}\not\equiv 0$, by \cite{girardinmaucourt2023}, there exists $C>0$ such that $I(x,t)\geq C$ for all $(x,t)\in\Omega\times[1,+\infty)$. Therefore, in $\Omega\times[1,+\infty)$,
 \[
  \partial_tV_i-\sigma_V\Delta V_i+\overline{C}V_i\geq\beta_{HV}CV.
 \]
 Let $T>1$. Integrating the previous equation on $\Omega$ and $[1,T]$, and using the Neumann boundary conditions on $V_i$, one gets
 \[
  \int_{\Omega}(V_i(\cdot,T)-V_i(\cdot,1))+\overline{C}\int_{\Omega}\int_1^TV_i\geq\beta_{HV}C\int_{\Omega}\int_1^TV,
 \]
 By the uniform Harnack inequality (see \cite{huska}, Theorem 2.5), it exists $C'>0$ independent of $T$ such that $\min_{\Omega}\int_1^TV_i\geq C'\max_{\Omega}\int_1^TV_i$. Then, one has, for all $x\in\Omega$,
 \[
  \begin{array}{ll}
   \overline{C}\int_0^TV_i(x,t)\mathrm{d}t & \geq\overline{C}\min_{\Omega}\int_1^TV_i\\
   & \geq\overline{C}C'\max_{\Omega}\int_1^TV_i\\
   & \geq\overline{C}C'|\Omega|\int_{\Omega}\int_1^TV_i\\
   & \geq\beta_{HV}CC'|\Omega|\int_{\Omega}\int_1^TV+\int_{\Omega}(V_i(\cdot,1)-V_i(\cdot,T)).
  \end{array}
 \]
 We have proved that, for an arbitrary $x\in\Omega$, if $V(x,\cdot)$ is not integrable at $+\infty$, then neither is $V_i(x,\cdot)$. It is then sufficient to show that the integral $\int_1^{+\infty}V(x,t)$ does not converge for any $x\in\Omega$.
 
 Let $\underline{V}=\underline{C}\dfrac{\varphi_{s,1}}{t+1}$, $\underline{C}=1/(2s_V\overline{\varphi_{s,1}})$. $\underline{V}$ solves
 \[
  \partial_t\underline{V}-\sigma_V\Delta\underline{V}=\left(r_V-h\dfrac{r_P}{s_P}\right)\underline{V}-\dfrac{1}{t+1}\underline{V}\leq\left(r_V-h\dfrac{r_P}{s_P}\right)\underline{V}-s_V\underline{V}^2
 \]
 and is then a subsolution of the Cauchy--Neumann system \eqref{preyeq} solved by $V$. We conclude that, for all $x\in\Omega$,
 \[
  \displaystyle\int_1^{+\infty}V(x,t)\mathrm{d}t\geq\int_1^{+\infty}\underline{V}(x,t)\mathrm{d}t=+\infty.
 \]
\end{proof}

We now deal with a simpler case of the limit of $\eta$ when $\lambda_1(\mathcal{L}_{V_s})$ approaches $0$ from the right: the case of $-r_V+h\dfrac{r_P}{s_P}$ uniformly converging to $0$. It is clear that it is a sufficient condition for $\lambda_1(\mathcal{L}_{V_s})$ converging to $0$, but it is not necessary.

\begin{proposition}
 Denote $m=-r_V+h\dfrac{r_P}{s_P}$. Consider the system \eqref{eqconstpred} with its associated harvest $\eta_m$. Let $(m_k)_{k\in\mathbb{N}}$ be a nonincreasing sequence (in the sense that $m_k\leq m_{k+1}$ for all $k\in\mathbb{N}$) of nonnegative functions that uniformly converges to $0$ as $k\to+\infty$. Suppose that $V_0\not\equiv 0$. Then,
 \[
  \lim_{k\to+\infty}{\eta_{m_k}}=0.
 \]
\end{proposition}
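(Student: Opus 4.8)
The plan is to show that $\int_0^{+\infty}V_i^{(k)}(x,t)\,\mathrm{d}t$ diverges as $k\to+\infty$, \emph{uniformly in $x\in\Omega$}, where $(I^{(k)},V_i^{(k)},V_s^{(k)})$ denotes the solution of \eqref{eqconstpred} with parameter $m_k$. Setting $L_k:=\inf_{x\in\Omega}\int_0^{+\infty}V_i^{(k)}(x,t)\,\mathrm{d}t$, this immediately gives $\eta_{m_k}=\int_\Omega He^{-\beta_{VH}\int_0^{+\infty}V_i^{(k)}}\leq e^{-\beta_{VH}L_k}\int_\Omega H\to0$. The argument follows the skeleton of the previous proposition applied to the $k$-th system, the new feature being that every constant must be chosen independent of $k$. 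Write $\mu_k:=\lambda_1(\mathcal{L}_{V_s})$ for the $k$-th system, i.e. the principal Neumann eigenvalue of $-\sigma_V\Delta+m_k$, and $\varphi_k$ the associated eigenfunction with $\min_\Omega\varphi_k=1$. From $m_k\geq0$ we get $0\leq\mu_k\leq\overline{m_k}\to0$, so $\mu_k\to0^+$; and when $m_k\equiv0$ (which, by monotonicity, then holds for all larger indices) the identity $\eta_{m_k}=0$ is exactly the previous proposition, so we may assume $\mu_k>0$ for every $k$.

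\emph{Uniform estimates.} Summing the last two equations of \eqref{eqconstpred}, $V^{(k)}$ solves $\partial_tV^{(k)}-\sigma_V\Delta V^{(k)}=-m_kV^{(k)}-s_V(V^{(k)})^2$. The constant $\|V_0\|_\infty$ and the function $t\mapsto(s_Vt+\|V_0\|_\infty^{-1})^{-1}$ are supersolutions, giving a $k$-independent global bound for $V^{(k)}$ and $V_i^{(k)}(\cdot,T)\to0$ as $T\to+\infty$. Since $m_k\leq m_1$, the solution $W$ of the same equation with $m_1$ in place of $m_k$ is a subsolution of the one solved by $V^{(k)}$; as $W>0$ on $\Omega\times(0,+\infty)$ by the strong maximum principle and $W$ is $k$-independent, $V^{(k)}(\cdot,1)\geq\min_{\overline\Omega}W(\cdot,1)=:\delta>0$ for all $k$. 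The eigenfunction solves $-\sigma_V\Delta\varphi_k+(m_k-\mu_k)\varphi_k=0$ with zeroth-order coefficient bounded in $L^\infty$ by $2\overline{m_1}$, so the elliptic Neumann Harnack inequality gives $\overline{\varphi_k}\leq M$ with $M$ independent of $k$. Finally, rewriting the $V_i^{(k)}$ equation as $\partial_tV_i^{(k)}-\sigma_V\Delta V_i^{(k)}+\big(\beta_{HV}I^{(k)}+\alpha+d_V+s_VV^{(k)}+h\tfrac{r_P}{s_P}\big)V_i^{(k)}=\beta_{HV}I^{(k)}V^{(k)}\geq0$, whose zeroth-order coefficient is bounded by the $k$-independent constant $\overline{C}:=\beta_{HV}\overline{H}+\alpha+\overline{d_V}+s_V\|V_0\|_\infty+h\tfrac{\overline{r_P}}{s_P}$ (the coefficients other than $m_k$ being held fixed along the sequence), comparison with the heat equation gives $V_i^{(k)}\geq e^{-\overline{C}t}Z$, where $Z$ solves $\partial_tZ=\sigma_V\Delta Z$ with datum $V_{i,0}$ and Neumann conditions; since $V_{i,0}\not\equiv0$, $Z>0$ for $t>0$, so $\int_0^1V_i^{(k)}(x,\cdot)$ is bounded below uniformly in $(x,k)$, and, as in \cite{girardinmaucourt2023}, $I^{(k)}(x,t)=H(x)\big(1-e^{-\beta_{VH}\int_0^tV_i^{(k)}(x,\cdot)}\big)\geq C>0$ on $\Omega\times[1,+\infty)$ with $C$ independent of $k$.

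\emph{Divergence of the integrals and conclusion.} On $\Omega\times(1,+\infty)$ put $\underline{V}^{(k)}(x,t):=\varphi_k(x)\,g_k(t-1)$, where $g_k$ solves the logistic ODE $g_k'=-\mu_kg_k-s_VMg_k^2$ with $g_k(0)=\delta/M$. Using the eigenvalue equation one computes $\partial_t\underline{V}^{(k)}-\sigma_V\Delta\underline{V}^{(k)}+m_k\underline{V}^{(k)}+s_V(\underline{V}^{(k)})^2=\varphi_k\big(g_k'+\mu_kg_k+s_V\varphi_kg_k^2\big)\leq\varphi_k\big(g_k'+\mu_kg_k+s_VMg_k^2\big)=0$, so $\underline{V}^{(k)}$ is a subsolution of the prey equation, satisfies the Neumann condition, and $\underline{V}^{(k)}(\cdot,1)=\varphi_kg_k(0)\leq\delta\leq V^{(k)}(\cdot,1)$; hence $V^{(k)}\geq\underline{V}^{(k)}$ there. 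A quadrature gives $\int_0^{+\infty}g_k=\tfrac{1}{s_VM}\ln\!\big(1+\tfrac{s_V\delta}{\mu_k}\big)=:S_k$, and $S_k\to+\infty$ since $\mu_k\to0^+$; as $\varphi_k\geq1$, $\int_\Omega\int_1^{+\infty}V^{(k)}\geq|\Omega|S_k$. Integrating the $V_i^{(k)}$ equation over $\Omega\times[1,T]$ (the Laplacian term vanishes by the Neumann condition, the zeroth-order term is $\leq\overline{C}\int_\Omega\int_1^TV_i^{(k)}$, the right-hand side is $\geq\beta_{HV}C\int_\Omega\int_1^TV^{(k)}$) and letting $T\to+\infty$ using $V_i^{(k)}(\cdot,T)\to0$ yields $\overline{C}\int_\Omega\int_1^{+\infty}V_i^{(k)}\geq\beta_{HV}C\,|\Omega|S_k$. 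Applying the uniform Harnack inequality of \cite{huska} to $V_i^{(k)}$, with a constant $C'$ depending only on $\sigma_V$, $\Omega$ and the $k$-uniform bound $\overline{C}$ (hence independent of $T$ and $k$), one gets $\min_\Omega\int_1^TV_i^{(k)}\geq C'\max_\Omega\int_1^TV_i^{(k)}\geq\tfrac{C'}{|\Omega|}\int_\Omega\int_1^TV_i^{(k)}$; letting $T\to+\infty$ gives, for every $x\in\Omega$, $\int_0^{+\infty}V_i^{(k)}(x,\cdot)\geq L_k:=\tfrac{C'\beta_{HV}C}{\overline{C}}S_k\to+\infty$, which closes the proof.

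\emph{Main obstacle.} Structurally this is a re-run of the previous proof, so the real work is the $k$-uniformity of all constants — $\delta$, $M$, $C$, $\overline{C}$, and, most delicately, the Harnack constant $C'$ for the nonautonomous and $k$-dependent equation satisfied by $V_i^{(k)}$. The monotonicity $m_k\leq m_1$ and the uniform bound $\overline{m_k}\leq\overline{m_1}$ are exactly what confine the zeroth-order coefficients of all relevant linear operators to a fixed $L^\infty$-ball and allow the comparison functions $W$ and $Z$ to be built from $k$-independent data; the genuinely delicate point is to read off from \cite{huska} that the Harnack constant depends on the coefficients only through their $L^\infty$-norm, so that it can be taken uniform in $k$.
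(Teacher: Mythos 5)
Your proof is correct and follows essentially the same route as the paper: an explicit logistic subsolution of the summed prey equation whose time-integral behaves like $\frac{1}{s_V M}\ln\bigl(1+\frac{s_V\delta}{\mu_k}\bigr)\to+\infty$ as the decay rate vanishes, followed by the transfer from $\int\!\!\int V^{(k)}$ to $\int_0^{+\infty}V_i^{(k)}$ via integration over $\Omega\times[1,T]$, the lower bound on $I^{(k)}$, and the uniform Harnack inequality, exactly as in the preceding proposition. The only differences are cosmetic — you use the eigenfunction-weighted subsolution $\varphi_k g_k$ with rate $\mu_k$ where the paper uses a spatially constant subsolution with rate $\overline{m}_k$, and you make explicit (commendably) the $k$-uniformity of the constants $\delta$, $C$, $\overline{C}$, $C'$ that the paper leaves implicit in ``the same proof as above,'' which also lets you conclude with a uniform-in-$x$ lower bound instead of the paper's dominated convergence step.
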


\begin{proof}
 The same proof as above yields that, for any $x\in\Omega$, if $V(x,\cdot)$ is not integrable, then $V_i(x,\cdot)$ is not integrable. With $\eta_{m_k}=\int_{\Omega}He^{-\beta_{VH}\int_0^{+\infty}V_i(\cdot,t)\mathrm{d}t}$, by the dominated convergence theorem, it suffices to prove that, for any $x\in\Omega$, $\lim_{k\to+\infty}{\int_0^{+\infty}V}=+\infty$.
 
 For all $k\in\mathbb{N}$, $V^k$ solves the equation
 \begin{equation}\label{preyeq2}
  \partial_tV^k-\sigma_V\Delta V^k =-m_kV^k-s_V(V^k)^2
 \end{equation}
 for $(x,t)\in\Omega\times\mathbb{R}_+^*$, with Neumann boundary conditions and initial condition $V(\cdot,0)=V_0$. By the comparison principle, and because $(m_k)$ is nonincreasing, $(V^k)$ is nondecreasing.
 
 By \cite{girardinmaucourt2023}, for all $\delta>0$, $\underset{x\in\Omega}{\inf}V^0(x,\delta)>0$. Denote $\underline{V}^k$ the solution of
 \[
  \begin{cases}
   \underline{V}^k(\cdot,1)=\underline{V}_1:=\underset{x\in\Omega}{\inf}\;(V^0)(x,1)>0 & \!\!\mbox{in }\Omega\\
   \partial_t\underline{V}^k-\sigma_V\Delta \underline{V}^k =-\overline{m}_k\underline{V}^k-s_V(\underline{V}^k)^2 & \!\!\mbox{in }\Omega\times[1,+\infty)\\
   \dfrac{\partial\underline{V}^k}{\partial n}=0 & \!\!\mbox{on }\partial\Omega\times[1,+\infty).
  \end{cases}
 \]
 which is a subsolution of the Cauchy--Neumann problem solved by $V^k$ on $\Omega\times[1,+\infty)$. We deduce, for all $t>1$,
 \[
  V^k(\cdot,t)\geq \underline{V}^k(t)=\dfrac{-\overline{m}_k}{s_V}\left(1+\left(\dfrac{-\overline{m}_k}{s_V\underline{V}_1}-1\right)e^{\overline{m}_kt}\right)^{-1}.
 \]
 
 By assumption, $\overline{m}_k:=\Vert m_k\Vert_{\infty}$ converges to $0$ when $k\to+\infty$. For all $x\in\Omega$,
 \[
  \begin{array}{ll}
   \displaystyle\int_0^{+\infty}V^k(x,t)\mathrm{d}t & \geq\dfrac{\overline{m}_k}{s_V}\displaystyle\int_1^{+\infty}\left(-1+\left(\dfrac{\overline{m}_k}{s_V\underline{V}_1}+1\right)e^{\overline{m}_kt}\right)^{-1}\mathrm{d}t\\[10pt]
   & =\dfrac{\overline{m}_k}{s_V}\displaystyle\int_1^{+\infty}e^{-\overline{m}_kt}\left(\dfrac{\overline{m}_k}{s_V\underline{V}_1}+1-e^{-\overline{m}_kt}\right)^{-1}\mathrm{d}t\\[10pt]
   & =\dfrac{1}{s_V}\left[\ln\left(\dfrac{\overline{m}_k}{s_V\underline{V}_1}+1-e^{-\overline{m}_kt}\right)\right]_1^{+\infty}\\[10pt]
   & =\dfrac{1}{s_V}\ln\left(\dfrac{\overline{m}_k/(s_V\underline{V}_1)+1}{\overline{m}_k/(s_V\underline{V}_1)+1-e^{-\overline{m}_k}}\right)\underset{k\to+\infty}{\longrightarrow}+\infty.
  \end{array}
 \]
\end{proof}
 
\section{Homogenization of the refuges}

In this section, we prove our primary homogenization result. In order to define properly a ``refuge of higher frequency'', we assume that $\Omega$ is a hyperrectangle in $\mathbb{R}^d$. In the example of the beet field, one can take $d=2$.

We can periodically extend a refuge $R\in \mathrm{L}^{\infty}(\Omega,[0,1])$ in $\mathbb{R}^d$ and define $R_n:x\mapsto R(nx)$ the refuge of frequency $n$.

For a function $f\in\mathrm{L}^{\infty}(\Omega)$, we denote
\[
 \left\langle f\right\rangle_{\Omega}=\dfrac{1}{|\Omega|}\int_{\Omega}f
\]
the average of $f$ over $\Omega$.

Denote $\mathcal{R}:=\left\langle R\right\rangle_{\Omega}$ and $\mathcal{R}_2:=\left\langle R^2\right\rangle_{\Omega}$. One can easily verify that for all $n\in\mathbb{N}^*$, $\left\langle R_n\right\rangle_{\Omega}=\mathcal{R}$ and $\left\langle R_n^2\right\rangle_{\Omega}=\mathcal{R}_2$.

For such a refuge, we denote
\[
 \begin{array}{l}
  H_n=H^0(1-R_n)\\
  b_V^n=(b_V^r-b_V^f)R_n+b_V^f\\
  d_V^n=(d_V^r-d_V^f)R_n+d_V^f\\
  r_V^n=b_V^n-d_V^n\\
  r_P^n=(r_P^r-r_P^f)R_n+r_P^f.
 \end{array}
\]

We consider a similar Cauchy--Neumann problem as \eqref{eqspring} with a refuge $R_n$, for which the system rewrites itself
\begin{equation}\label{systemfreqn}
 \begin{cases}
 \partial_tI & =\beta_{VH}(H_n-I)V_i \\
 \partial_tV_i & =\sigma_V\Delta V_i+\beta_{HV}IV_s-(\alpha+d_V^n+s_V(V_s+V_i)+hP)V_i \\
 \partial_tV_s & =\sigma_V\Delta V_s+(\alpha+b_V^n)V_i+(r_V^n-\beta_{HV}I-s_V(V_s+V_i)-hP)V_s \\
 \partial_t P & =\sigma_P\nabla\cdot\left(r_P^n\nabla\left(\dfrac{P}{r_P^n}\right)\right)+(\gamma h(V_s+V_i)+r_P^n-s_PP)P.
 \end{cases}
\end{equation}
with nonnegative initial condition $(I, V_i, V_s, P)(\cdot,0) = (I_0^n, V_{i,0}^n, V_{s,0}^n, P_0^n)\quad\text{in }\Omega$.

Denote $(I_n,V_i^n,V_s^n,P_n)$, the unique solution of this new problem, and
\[
 \eta_n=\left(\int_{\Omega}H-I_{\infty}\right)_n=\int_{\Omega}H_ne^{-\beta_{VH}\int_0^{+\infty}V_i^n}
\]
the associated harvest.

We also denote
\[
 \begin{array}{rll}
  H_{\infty} & =\left\langle H_n\right\rangle_{\Omega} & =H^0(1-\mathcal{R})\\
  b_V^{\infty} & =\left\langle b_V^n\right\rangle_{\Omega} & =(b_V^r-b_V^f)\mathcal{R}+b_V^f\\
  d_V^{\infty} & =\left\langle d_V^n\right\rangle_{\Omega} & =(d_V^r-d_V^f)\mathcal{R}+d_V^f\\
  r_V^{\infty} & =\left\langle r_V^n\right\rangle_{\Omega} & =b_V^{\infty}-d_V^{\infty}\\
  r_P^{\infty} & =\left\langle r_P^n\right\rangle_{\Omega} & =(r_P^r-r_P^f)\mathcal{R}+r_P^f\\
  (r_P^{\infty})_2 & =\left\langle (r_P^n)^2\right\rangle_{\Omega} & =(r_P^r-r_P^f)^2\mathcal{R}_2+2(r_P^r-r_P^f)r_P^f\mathcal{R}+(r_P^f)^2,
 \end{array}
\]
and $r^*\in\mathcal{M}_d(\mathbb{R})$ the so-called ``homogenized conductivity'' defined in \cite{allaire2010}.

The homogenized Cauchy--Neumann problem is the following:
\begin{equation}\label{systemlimit}
 \begin{cases}
 \partial_tI & =\beta_{VH}(H_{\infty}-I)V_i \\
 \partial_tV_i & =\sigma_V\Delta V_i+\beta_{HV}IV_s-(\alpha+d_V^{\infty}+s_V(V_s+V_i)+hP)V_i \\
 \partial_tV_s & =\sigma_V\Delta V_s+(\alpha+b_V^{\infty})V_i+(r_V^{\infty}-\beta_{HV}I-s_V(V_s+V_i)-hP)V_s \\
 \partial_t P & =\sigma_P\nabla\cdot\left(r^*\nabla\left(\dfrac{P}{r_P^{\infty}}\right)\right)+(\gamma h(V_s+V_i)+\dfrac{(r_P^{\infty})_2}{r_P^{\infty}}-s_P\dfrac{(r_P^{\infty})_2}{(r_P^{\infty})^2}P)P.
 \end{cases}
\end{equation}
with nonnegative initial condition $(I, V_i, V_s, P)(\cdot,0) = (I_0^{\infty}, V_{i,0}^{\infty}, V_{s,0}^{\infty}, P_0^{\infty})\quad\text{in }\Omega$. We assume the weak convergence in $\mathrm{L}^2(\Omega)$ of $I_0^n, V_{i,0}^n, V_{s,0}^n, P_0^n$ to $I_0^{\infty}, V_{i,0}^{\infty}, V_{s,0}^{\infty}, P_0^{\infty}$, respectively.

Denote $(I^{\infty},V_i^{\infty},V_s^{\infty},P^{\infty})$, the unique solution of this problem, and
\[
 \eta_{\infty}=\left(\int_{\Omega}H-I_{\infty}\right)_{\infty}=\int_{\Omega}H_{\infty}e^{-\beta_{VH}\int_0^{+\infty}V_i^{\infty}}
\]
the associated harvest.

\textbf{Remark:} The classical homogenization techniques, as developed by Murat and Tartar \cite{murattartar1977}, elaborated upon in books like \cite{bakhvalov1989}, and well comprehensively explained by Allaire \cite{allaire2010}, are applicable in this context, particularly for the fourth equation in \eqref{systemfreqn}. As for the first three equations in \eqref{systemfreqn}, given that the parameter $n$ solely appears in the reaction term, straightforward estimates lead to the strong convergence in $\mathrm{L}^2(\Omega\times[0,T])$ of the solutions toward those of the homogenized problem. However, the presence of $r_P^n$ in the divergence term of the fourth equation in \eqref{systemfreqn} hinders a straightforward limit transition without further considerations. In fact, unlike the sequences $(I_n)$, $(V_i^n)$, and $(V_s^n)$, the sequence $(P_n)$ converges only weakly in $\mathrm{L}^2(\Omega\times[0,T])$.

\begin{lemma}
 The sequences $(H_n)$, $(b_V^n)$, $(d_V^n)$, $(r_V^n)$ and $(r_P^n)$ weakly converge to, respectively, $H_{\infty}$, $b_V^{\infty}$, $d_V^{\infty}$, $r_V^{\infty}$ and $r_P^{\infty}$ in $\mathrm{L}^2(\Omega)$.
\end{lemma}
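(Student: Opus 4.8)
The plan is to recognize this lemma as an instance of the classical fact that periodic oscillations with a fixed period, upon rescaling $x\mapsto nx$, converge weakly to their mean. All five coefficient sequences are affine functions of the single refuge sequence $R_n:x\mapsto R(nx)$, so it suffices to prove the weak convergence $R_n\rightharpoonup \mathcal{R}$ in $\mathrm{L}^2(\Omega)$, and then pass the affine relations to the limit: since $H_n=H^0(1-R_n)$, $b_V^n=(b_V^r-b_V^f)R_n+b_V^f$, etc., and weak convergence is preserved under affine maps with constant coefficients, we immediately get $H_n\rightharpoonup H^0(1-\mathcal{R})=H_\infty$, $b_V^n\rightharpoonup(b_V^r-b_V^f)\mathcal{R}+b_V^f=b_V^\infty$, and likewise for $d_V^n$, $r_V^n=b_V^n-d_V^n$, and $r_P^n$. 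These limits match exactly the definitions of $H_\infty$, $b_V^\infty$, $d_V^\infty$, $r_V^\infty$, $r_P^\infty$ recorded above.

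So the whole content is the scalar statement: if $R\in\mathrm{L}^\infty(\mathbb{R}^d)$ is $Y$-periodic (with $Y$ the fundamental cell of the hyperrectangle $\Omega$, say) with mean $\mathcal{R}=\langle R\rangle_Y$, then $R_n=R(n\,\cdot)\rightharpoonup\mathcal{R}$ weakly in $\mathrm{L}^2(\Omega)$. First I would note that $(R_n)$ is bounded in $\mathrm{L}^\infty(\Omega)\subset\mathrm{L}^2(\Omega)$ (uniformly, by $\|R\|_\infty\le 1$), so it is bounded in $\mathrm{L}^2(\Omega)$ and any weak limit is characterized by testing against a dense class. I would test against $\mathcal{C}_c^\infty(\Omega)$ (or against characteristic functions of subrectangles, or simply invoke the standard Riemann--Lebesgue-type lemma for periodic functions, e.g.\ as in \cite{allaire2010}): for $\varphi\in\mathcal{C}_c^\infty(\Omega)$, a change of variables and the equidistribution of the rescaled cells give $\int_\Omega R(nx)\varphi(x)\,\mathrm{d}x\to \mathcal{R}\int_\Omega\varphi$, which is exactly $\int_\Omega R_n\varphi\to\int_\Omega \mathcal{R}\,\varphi$. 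Density of $\mathcal{C}_c^\infty(\Omega)$ in $\mathrm{L}^2(\Omega)$ together with the uniform $\mathrm{L}^2$ bound upgrades this to weak convergence against all test functions in $\mathrm{L}^2(\Omega)$.

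There is essentially no hard part here; this is a textbook lemma, and in fact it is one of the standard first facts in periodic homogenization (the weak-$\ast$ convergence of rapidly oscillating periodic coefficients to their average). The only point deserving a word of care is that $\Omega$ is a hyperrectangle and the periodic extension is with respect to the period cell $Y$ matching $\Omega$'s sides, so that $\langle R_n\rangle_\Omega=\mathcal{R}$ and $\langle R_n^2\rangle_\Omega=\mathcal{R}_2$ hold exactly for every $n$ (as already observed in the text) and no boundary-layer correction is needed. I would therefore keep the proof short: state the scalar periodic weak-convergence fact with a reference to \cite{allaire2010}, then conclude for the five sequences by passing the affine identities to the weak limit.
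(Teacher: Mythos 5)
Your proposal is correct and follows essentially the same route as the paper: reduce everything to the weak convergence $R_n\rightharpoonup\mathcal{R}$ via the affine relations, then establish that scalar fact by testing against indicator functions of subrectangles (the paper carries out the explicit change of variables and period-counting that you delegate to the standard Riemann--Lebesgue lemma of periodic homogenization) and conclude by density of step functions in $\mathrm{L}^2(\Omega)$. No gap.
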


\begin{proof}
 The weak convergence of the five sequences comes from the weak convergence of the sequence $(R_n)$ to $x\mapsto\mathcal{R}$. Indeed, say $\Omega=[0,L]$ and let $\mathbf{1}_{[a,b]}$ be an indicator function. A change of variable yields
 \[
  \int_a^bR_n=\dfrac{1}{n}\int_{na}^{nb}R.
 \]
 
 For $k\in\mathbb{N}^*$, if $kL\leq n(b-a)\leq (k+1)L$, the segment $[na,nb]$ intersects at least $k$ intervals of size $L$, and at best $k+1$, therefore
 \[
  \dfrac{k}{n}L\mathcal{R}\leq\dfrac{1}{n}\int_{na}^{nb}R\leq\dfrac{k+1}{n}L\mathcal{R},
 \]
 thus
 \[
  \left(b-a-\dfrac{L}{n}\right)\mathcal{R}\leq\dfrac{1}{n}\int_{na}^{nb}R\leq\left(b-a+\dfrac{L}{n}\right)\mathcal{R}
 \]
 and
 \[
  \dfrac{1}{n}\int_{na}^{nb}R\underset{n\to+\infty}{\longrightarrow}(b-a)\mathcal{R}=\int_{\Omega}\mathbf{1}_{[a,b]}\mathcal{R}.
 \]
 
 With a very similar proof, this is also true when $\Omega$ is a rectangle in $\mathbb{R}^d$, and trivially for any step function. We deduce the weak convergence of $(R_n)$ by density of the space of step functions in $\mathrm{L}^2(\Omega)$.
\end{proof}

We then need to assess the strong convergence almost everywhere in $\Omega\times[0,T]$ for $T>0$ of the solutions when $n\to+\infty$.

\begin{lemma}\label{strongCVofVi}
 Let $T>0$ be arbitrary. Consider the Cauchy--Neumann problem \eqref{systemfreqn} for $(x,t)\in\Omega\times[0,T]$, and call, for $n\in\mathbb{N}$, $(I_n,V_i^n,V_s^n,P_n)$ the unique solution. Then, the sequences $(I_n)$, $(V_i^n)$, $(V_s^n)$ and $(P_n/r_P^n)$ strongly converge in $\mathrm{L}^2(\Omega\times[0,T])$ and almost everywhere in $\Omega\times[0,T]$ to, respectively, $(I_{\infty})$, $(V_i^{\infty})$, $(V_s^{\infty})$ and $(P_{\infty}/r_P^{\infty})$, and $P_{\infty}$ is the weak limit in $\mathrm{L}^2(\Omega\times[0,T])$ of $(P_n)$.
\end{lemma}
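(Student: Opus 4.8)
The plan is to combine uniform (in $n$) a priori estimates with the Aubin--Lions compactness lemma, then pass to the limit in the weak formulation, treating the fourth equation separately with a Murat--Tartar type ansatz because of the oscillating conductivity $r_P^n$. First I would record the uniform bounds: by the maximum principle and the global boundedness estimates from \cite{girardinmaucourt2023}, all four components $I_n$, $V_i^n$, $V_s^n$, $P_n$ are bounded in $\mathrm{L}^{\infty}(\Omega\times[0,T])$ uniformly in $n$ (the coefficients $H_n$, $d_V^n$, $r_V^n$, $r_P^n$ all lie between fixed positive constants independent of $n$). Testing the PDEs against the solutions and integrating by parts then gives uniform bounds on $\nabla V_i^n$, $\nabla V_s^n$ in $\mathrm{L}^2(\Omega\times[0,T])$, and on $\nabla(P_n/r_P^n)$ in $\mathrm{L}^2$; since $r_P^n$ is bounded above and below, this yields a uniform $\mathrm{L}^2$ bound on $\nabla P_n$ as well. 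From the equations, $\partial_t I_n$, $\partial_t V_i^n$, $\partial_t V_s^n$ are bounded in $\mathrm{L}^2(0,T;H^{-1}(\Omega))$ (or even better for $I_n$, which solves an ODE in $t$ pointwise), and $\partial_t P_n$ likewise.

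Next I would apply Aubin--Lions on $\Omega\times[0,T]$: the triple $H^1(\Omega)\hookrightarrow\mathrm{L}^2(\Omega)\hookrightarrow H^{-1}(\Omega)$ (first inclusion compact) gives, up to a subsequence, strong $\mathrm{L}^2(\Omega\times[0,T])$ convergence and a.e.\ convergence of $V_i^n$, $V_s^n$, $P_n$, hence of $P_n/r_P^n$ once we also use the a.e.\ convergence of $r_P^n$; for $I_n$, since it is a time-integral of $\beta_{VH}(H_n-I_n)V_i^n$ with a uniform Lipschitz bound, one gets equicontinuity in $t$ and strong convergence directly. Call the limits $I_\infty$, $V_i^\infty$, $V_s^\infty$, $P_\infty$. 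The strong/a.e.\ convergence lets one pass to the limit in every nonlinear reaction term in the first three equations, and the weak convergence of the coefficient sequences (previous lemma) combined with the strong convergence of the solutions handles the products like $d_V^n V_i^n$ — here one uses that a weak-times-strong product converges to the product of the limits. The fourth equation is the delicate one: one must identify the weak limit of $\sigma_P r_P^n\nabla(P_n/r_P^n)$. This is exactly the setting of classical periodic homogenization of a divergence-form operator with oscillating coefficient $r_P^n(x)=r_P(nx)$; introducing the corrector/oscillating test functions of Murat--Tartar (the ansatz alluded to in the paper's remark), the flux $r_P^n\nabla(P_n/r_P^n)$ converges weakly to $r^*\nabla(P_\infty/r_P^\infty)$ with $r^*$ the homogenized conductivity, and the zero-order terms $\tfrac{(r_P^n)^2}{r_P^n}P_n$-type expressions converge because $(r_P^n)^2$ converges weakly to $(r_P^\infty)_2$ while $P_n$ converges strongly. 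Matching all limits shows $(I_\infty,V_i^\infty,V_s^\infty,P_\infty)$ solves \eqref{systemlimit}; uniqueness for that system (same argument as well-posedness of \eqref{eqspring}) forces the whole sequence, not just a subsequence, to converge.

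The main obstacle I expect is the rigorous limit passage in the fourth equation, specifically justifying $r_P^n\nabla(P_n/r_P^n)\rightharpoonup r^*\nabla(P_\infty/r_P^\infty)$ in the presence of the coupling: the standard Murat--Tartar div-curl / compensated-compactness argument is for a fixed linear equation, whereas here the ``source'' $(\gamma h V^n + r_P^n - s_P P_n)P_n$ depends on $n$ through both the strongly convergent $V^n$, $P_n$ and the only weakly convergent $r_P^n$. One resolves this by noting the source is bounded in $\mathrm{L}^2(\Omega\times[0,T])$ uniformly, so $P_n$ is bounded in $\mathrm{L}^2(0,T;H^1)$ with $\partial_t P_n$ bounded in $\mathrm{L}^2(0,T;H^{-1})$ regardless, and then applying the homogenization machinery to the elliptic-in-space part at (almost) each fixed time, the time-dependence entering only as an $\mathrm{L}^2$ parameter; the compensated compactness lemma still applies since the relevant curl-free and divergence-controlled sequences are unchanged. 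A secondary technical point is that the $\mathrm{L}^2$-weak convergence of the initial data $P_0^n\rightharpoonup P_0^\infty$ only gives the limit initial condition in a weak sense, which is enough to identify the limit problem but should be stated carefully. Finally, once strong $\mathrm{L}^2(\Omega\times[0,T])$ and a.e.\ convergence of $V_i^n$ is established for every $T$, a diagonal extraction over $T\to+\infty$ together with the uniform exponential-type decay estimates on $V_i^n$ from \cite{girardinmaucourt2023} (uniform in $n$ because the coefficients are uniformly bounded) upgrades this to convergence of the full time integral $\int_0^{+\infty}V_i^n$, which is what Theorem \ref{thhomogen} ultimately needs.
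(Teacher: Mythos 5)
Your overall architecture (uniform $\mathrm{L}^{\infty}$ bounds, energy estimates, Aubin--Lions, limit passage in the weak formulation, Murat--Tartar ansatz for the predator equation, uniqueness of the limit problem) matches the paper's proof for the first three equations. But your treatment of the fourth component contains a genuine error that contradicts the very statement you are proving. You claim that the uniform $\mathrm{L}^2$ bound on $\nabla(P_n/r_P^n)$, together with the boundedness of $r_P^n$ from above and below, yields a uniform $\mathrm{L}^2$ bound on $\nabla P_n$, and you then apply Aubin--Lions to get strong $\mathrm{L}^2$ and a.e.\ convergence of $P_n$ itself. This fails: $\nabla P_n=(P_n/r_P^n)\nabla r_P^n+r_P^n\nabla(P_n/r_P^n)$, and $\nabla r_P^n(x)=(r_P^r-r_P^f)\,n\,(\nabla R)(nx)$ blows up like $n$ in $\mathrm{L}^2$ whenever $R$ is non-constant. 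So $(P_n)$ has no uniform $H^1$ bound and is not precompact in $\mathrm{L}^2$; indeed, since $P_n=r_P^n\,(P_n/r_P^n)$ with $(r_P^n)$ oscillating, one has $\Vert P_n\Vert_{\mathrm{L}^2}^2\to\int(r_P^{\infty})_2\,\overline{P_r}^2\neq\Vert r_P^{\infty}\overline{P_r}\Vert_{\mathrm{L}^2}^2$ in general, which rules out strong convergence. The lemma explicitly asserts only weak convergence of $(P_n)$. Your auxiliary claim that $r_P^n$ converges a.e.\ is also false — it converges only weakly to its mean — so you cannot transfer strong convergence between $P_n$ and $P_n/r_P^n$ in either direction by dividing or multiplying pointwise.

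The correct route, which is the one the paper takes, is to run the compactness argument on the quotient $\tilde{P}_n=P_n/r_P^n$ from the start: testing the fourth equation against $\tilde{P}_n$ produces $\int r_P^n|\nabla\tilde{P}_n|^2$, hence a uniform $\mathrm{L}^2([0,T],\mathrm{H}^1(\Omega))$ bound on $\tilde{P}_n$ (not on $P_n$), Aubin--Lions gives strong convergence of $\tilde{P}_n$ to some $\overline{P_r}$, the two-scale ansatz identifies the homogenized equation satisfied by $\overline{P_r}$, and only then is $P_{\infty}$ \emph{defined} as the weak limit $r_P^{\infty}\overline{P_r}$ of the product $r_P^n\tilde{P}_n$ (weak times strong). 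This also fixes your limit passage in the zero-order terms: the products must be written as $(r_P^n)^2\tilde{P}_n$ and $(r_P^n)^2\tilde{P}_n^2$ (weakly convergent coefficient times strongly convergent factor), which is precisely why $(r_P^{\infty})_2/r_P^{\infty}$ and $(r_P^{\infty})_2/(r_P^{\infty})^2$ — rather than $r_P^{\infty}$ and $s_P$ — appear in the homogenized predator equation; your version, based on strong convergence of $P_n$, would produce the wrong limit coefficients. Finally, your last paragraph on the diagonal extraction in $T$ and the full time integral belongs to the proof of Theorem \ref{thhomogen}, not to this lemma.
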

 
\begin{proof}
 This proof is very much inspired by \cite{dancer1999}.
 
 First of all, by \cite{girardinmaucourt2023}, for all $n\in\mathbb{N}$, $(I_n,V_i^n,V_s^n,P_n/r_P^n)$ are bounded in $\mathrm{L}^{\infty}(\Omega\times[0,T])$ by a constant $C>0$ independent of $n$.
 
 Multiplying the second equation by $V_i$ and integrating over $\Omega$, one gets
 \[
 \begin{array}{lll}
  \dfrac{1}{2}\dfrac{\partial}{\partial t}\int_{\Omega}(V_i^n)^2+\sigma_V\int_{\Omega}|\nabla V_i^n|^2 & = & \int_{\Omega}\beta_{HV}I_nV_s^nV_i^n\\
  && -\int_{\Omega}(\alpha+d_V+s_V(V_s^n+V_i^n)+hP_n)(V_i^n)^2\\
  & \leq & |\Omega|\beta_{HV}C^3.
 \end{array}
 \]
 
 Integrating this over $[0,T]$ yields the existence of a constant $C'>0$ independent of $n$ such that
 \[
  \int_0^T\int_{\Omega}|\nabla V_i^n|^2\leq C',
 \]
 hence the boundedness of the $\mathrm{L}^2([0,T],\mathrm{H}^1(\Omega))$-norm of $V_i^n$.
 
 Similarly, multiplying the first equation by $I$ and integrating over $\Omega$ and $[0,T]$, multiplying the third equation by $V_s$ and integrating over $\Omega$ and $[0,T]$, and multiplying the fourth equation by $P/r_P$ and integrating over $\Omega$ and $[0,T]$ yields the boundedness of the $\mathrm{L}^2([0,T],\mathrm{H}^1(\Omega))$-norm of $I_n$, $V_s^n$ and $P_n/r_P^n$.
 
 Let $\zeta\in \mathrm{L}^2([0,T],\mathrm{H}^1(\Omega))$ be arbitrary. Multiplying the second equation by $\zeta$ and integrating over $\Omega\times[0,T]$, one gets
 {\small\[
  \int_0^T\!\int_{\Omega}\partial_tV_i^n\zeta+\sigma_V\!\int_0^T\!\int_{\Omega}\nabla V_i^n\cdot\nabla\zeta=\int_0^T\!\int_{\Omega}(\beta_{HV}I_nV_s^n-(\alpha+d_V+s_V(V_s^n+V_i^n)+hP_n)V_i^n)\zeta,
 \]}
 hence
 \[
 \begin{array}{ll}
  \left|\int_0^T\int_{\Omega}\partial_tV_i^n\zeta\right| & \leq\sigma_V\sqrt{C'}\sqrt{\int_0^T\int_{\Omega}|\nabla\zeta|^2}+C''\sqrt{\int_0^T\int_{\Omega}\zeta^2}\\
  & \leq \max(\sigma\sqrt{C'},C'')\Vert\zeta\Vert_{\mathrm{L}^2([0,T],\mathrm{H}^1(\Omega))},
 \end{array}
 \]
 
 where $C''$ is a positive constant independent of $n$. By the arbitrariness of $\zeta$, this shows that $\Vert\partial_tV_i^n\Vert_{\mathrm{L}^2([0,T],(\mathrm{H}^1(\Omega))')}\leq\max(\sigma\sqrt{C},C'')$.
 
 Similarly, multiplying the first, second and third equation by $\zeta$ and integrating over $\Omega\times[0,T]$ yields bounds for the $\mathrm{L}^2([0,T],(\mathrm{H}^1(\Omega))')$-norm of $\partial_tI_n$, $\partial_tV_s^n$ and $\partial_t(P_n/r_P^n)$ (recall that $r_P$ is independent of time).
 
 By the Aubin-Lions theorem, the sequences $(I_n)$, $(V_i^n)$ $(V_s^n)$ and $(P_n/r_P^n)$ are precompact in $\mathrm{L}^2(\Omega\times[0,T])$.
 
 Then, up to extraction, we obtain strong convergence of the four sequences in $\mathrm{L}^2(\Omega\times[0,T])$ and almost everywhere in $\Omega\times[0,T]$. Denote $\overline{I}$, $\overline{V_i}$, $\overline{V_s}$ and $\overline{P_r}$ the respective limits. In the following, we denote again by $(I_n)$, $(V_i^n)$ $(V_s^n)$ and $(P_n/r_P^n)$ the extracted sequences. Recall that the initial conditions converge weakly in $\mathrm{L}^2(\Omega)$ of the initial conditions of the homogenized system.
 
 Let $\varphi\in\mathcal{C}^2(\Omega\times[0,T])$ verifying the Neumann boundary conditions in $\Omega$ and such that $\varphi(\cdot,T)=0$. Multiplying the first equation of \eqref{systemfreqn} by $\varphi$ and integrating over $\Omega\times[0,T]$ yields
 \begin{equation}\label{eq3varphi}
  \int_0^T\int_{\Omega}\varphi\partial_t I_n=\beta_{VH}\int_0^T\int_{\Omega}\varphi(H_n-I_n)V_i^n.
 \end{equation}
 
 Moreover,
 \[
  \begin{array}{ll}
   \int_0^T\int_{\Omega}\varphi\partial_t I_n & =\int_{\Omega}\left(\left[\varphi I_n\right]_0^T-\int_0^T\partial_t\varphi I_n\right)\\
   & =-\int_{\Omega}\left(\varphi(\cdot,0)I_0+\int_0^T\partial_t\varphi I_n\right),
  \end{array}
 \]
 
 \[
  \begin{array}{ll}
   & \left|\int_0^T\int_{\Omega}\partial_t\varphi I_n-\int_0^T\int_{\Omega}\partial_t\varphi \overline{I}\right|\\
   \leq & \left|\int_{\Omega}\varphi(\cdot,0)(I_0^{\infty}-I_0^n)\right|+\sqrt{\int_0^T\int_{\Omega}(\partial_t\varphi)^2}\Vert I_n-I\Vert_{\mathrm{L}^2(\Omega\times[0,T])}\\
   & \underset{n\to+\infty}{\longrightarrow}0,
  \end{array}
 \]

 {\small\[
 \begin{array}{lll}
  \left|\int_0^T\int_{\Omega}\varphi H_nV_i^n-\int_0^T\int_{\Omega}\varphi H_{\infty}\overline{V_i}\right| & = & \left|\int_0^T\int_{\Omega}\varphi H_n(V_i^n-\overline{V_i})+\int_0^T\int_{\Omega}\varphi(H_n-H_{\infty})\overline{V_i}\right|\\
  & \leq & \sqrt{\int_0^T\int_{\Omega}\varphi^2H_n^2}\Vert V_i^n-\overline{V_i}\Vert_{\mathrm{L}^2(\Omega\times[0,T])}\\
  && +\left|\int_0^T\int_{\Omega}(H_n-H_{\infty})\varphi \overline{V_i}\right|\\
  && \underset{n\to+\infty}{\longrightarrow}0
 \end{array}
 \]}
 and
 {\small\[
 \begin{array}{ll}
  \left|\int_0^T\int_{\Omega}\varphi I_nV_i^n-\int_0^T\int_{\Omega}\varphi\overline{I}\overline{V_i}\right| & =\left|\int_0^T\int_{\Omega}\varphi(I_n-\overline{I})V_i^n+\int_0^T\int_{\Omega}\varphi\overline{I}(V_i^n-\overline{V_i})\right|\\
  & \leq T\max(r_V^r,r_V^f)/s_V\sqrt{\int_0^T\int_{\Omega}\varphi^2}\Vert I_n-I\Vert_{\mathrm{L}^2(\Omega\times[0,T])}\\
  & +\sqrt{\int_0^T\int_{\Omega}\varphi^2\overline{I}^2}\Vert V_i^n-\overline{V_i}\Vert_{\mathrm{L}^2(\Omega\times[0,T])}\\
  & \underset{n\to+\infty}{\longrightarrow}0.
 \end{array}
 \]}
 
 Therefore, letting $n$ go to $+\infty$ in \eqref{eq3varphi}, one gets
 \[
  \int_0^T\int_{\Omega}\varphi\partial_t\overline{I}=\beta_{VH}\int_0^T\int_{\Omega}\varphi(H_{\infty}-\overline{I})\overline{V_i}.
 \]
 
 Similarly, multiplying the second and third equation of \eqref{systemfreqn} by $\varphi$, and letting $n$ go to $+\infty$, also using the following convergence:
 \[
 \begin{array}{ll}
  \left|\int_0^T\int_{\Omega}\varphi\Delta V_i^n-\int_0^T\int_{\Omega}\varphi\Delta\overline{V_i}\right| & =\left|\int_0^T\int_{\Omega}\Delta\varphi(V_i^n-\overline{V_i})\right|\\
  & \leq\sqrt{\int_0^T\int_{\Omega}(\Delta\varphi)^2}\Vert V_i^n-\overline{V_i}\Vert_{\mathrm{L}^2(\Omega\times[0,T])}\\
  & \underset{n\to+\infty}{\longrightarrow}0,
 \end{array}
 \]
 one gets:
 \[
  \int_0^T\int_{\Omega}\varphi\partial_t\overline{V_i}=\int_0^T\int_{\Omega}\varphi(\sigma_V\Delta \overline{V_i}+\beta_{HV}\overline{I}\overline{V_s}-(\alpha+d_V^{\infty}+s_V(\overline{V_s}+\overline{V_i})+h\overline{P})\overline{V_i})
 \]
 and
 \[
  \int_0^T\int_{\Omega}\varphi\partial_t\overline{V_s}=\int_0^T\int_{\Omega}\varphi(\sigma_V\Delta \overline{V_s}+(\alpha+b_V^{\infty})\overline{V_i}+(r_V^{\infty}-\beta_{HV}\overline{I}-s_V(\overline{V_s}+\overline{V_i})-h\overline{P})\overline{V_s})
 \]
 (where $\overline{P}$ is the weak limit of $P_n$, whose existence will be justified hereafter). We deal with the fourth equation by applying the change of variable $\tilde{P}=P/r_P^n$:
 \[
  r_P^n\partial_t \tilde{P}=\sigma_P\nabla\cdot\left(r_P^n\nabla\tilde{P}\right)+(\gamma h(V_s+V_i)+r_P^n-s_Pr_P^n\tilde{P})r_P^n\tilde{P}
 \]
and by carrying out a classical two-scale asymptotic expansion (also called
an ansatz, cf \cite{allaire2010}), we find that $(P_n/r_P^n)$ converges weakly in $\mathrm{L}^2([0,T],\mathrm{H}^1(\Omega))$ to the solution (in the weak sense) of
 \begin{equation}\label{eqpredhomo}
  r_P^{\infty}\partial_t \tilde{P}=\sigma_P\nabla\cdot\left(r^*\nabla\tilde{P}\right)+(\gamma hr_P^{\infty}(\overline{V_s}+\overline{V_i})+(r_P^{\infty})_2-s_P(r_P^{\infty})_2\tilde{P})\tilde{P}.
 \end{equation}
 
 We voluntarily elude the computations to lighten the proof, as they are almost identical to the examples given in \cite{allaire2010}.
 
 By uniqueness of the limit, $\overline{P_r}$ solves equation \eqref{eqpredhomo}. Moreover,
 \[
  P_n=r_P^n(P_n/r_P^n)\underset{n\to+\infty}{\longrightarrow}r_P^{\infty}\overline{P_r}=:\overline{P}\mbox{ weakly in }\mathrm{L}^2(\Omega\times[0,T]).
 \]
 
 By plugging $\overline{P}$ in \eqref{eqpredhomo}, one gets (in the weak sense)
 \[
  \partial_t \overline{P}=\sigma_P\nabla\cdot\left(r^*\nabla\left(\dfrac{\overline{P}}{r_P^{\infty}}\right)\right)+(\gamma h(\overline{V_s}+\overline{V_i})+\dfrac{(r_P^{\infty})_2}{r_P^{\infty}}-s_P\dfrac{(r_P^{\infty})_2}{(r_P^{\infty})^2}\overline{P})\overline{P}.
 \]
 
 The weak Neumann boundary condition follows easily from the weak convergence of the gradient of the sequences.

 By uniqueness of the weak solution of \eqref{systemlimit}, $(\overline{I},\overline{V_i},\overline{V_s},\overline{P})=(I_{\infty},V_i^{\infty},V_s^{\infty},P_{\infty})$, and we obtain the strong convergence in $\mathrm{L}^2(\Omega\times[0,T])$ and almost everywhere in $\Omega\times[0,T]$ of the non-extracted sequences $(I_n)$, $(V_i^n)$, $(V_s^n)$ and $(P_n/r_P^n)$ and the weak convergence in $\mathrm{L}^2(\Omega\times[0,T])$ of the non-extracted sequence $(P_n)$.

\end{proof}

\begin{theorem}[Theorem~\ref{thhomogen}]
 Suppose that $\lambda_1(\mathcal{L}_{V_s})>0$.
 
 The harvest $\eta_n=\left(\displaystyle\int_{\Omega}H-I_{\infty}\right)_n$ converges to $\eta_{\infty}=\left(\displaystyle\int_{\Omega}H-I_{\infty}\right)_{\infty}$ as $n\to+\infty$.
\end{theorem}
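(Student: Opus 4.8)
The plan is to deduce $\eta_n\to\eta_\infty$ from the finite-horizon convergence of Lemma~\ref{strongCVofVi} by truncating the time integral, the one genuinely new ingredient being a tail estimate on $\int_0^{+\infty}V_i^n$ that is uniform in $n$. Write $\eta_n=\int_\Omega H_ne^{-\beta_{VH}\int_0^{+\infty}V_i^n}$ and, for $T>0$, set $\eta_n^T=\int_\Omega H_ne^{-\beta_{VH}\int_0^{T}V_i^n}$, and likewise $\eta_\infty^T$. I would prove: \textbf{(a)} for each fixed $T$, $\eta_n^T\to\eta_\infty^T$ as $n\to+\infty$; \textbf{(b)} $\sup_n|\eta_n-\eta_n^T|\to0$ as $T\to+\infty$; \textbf{(c)} $\eta_\infty^T\to\eta_\infty$ as $T\to+\infty$. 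Then, from $|\eta_n-\eta_\infty|\le|\eta_n-\eta_n^T|+|\eta_n^T-\eta_\infty^T|+|\eta_\infty^T-\eta_\infty|$, a standard $\varepsilon/3$ argument (choose $T$ large by (b)--(c), then $n$ large by (a)) finishes the proof.

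For (a): Lemma~\ref{strongCVofVi} gives $V_i^n\to V_i^\infty$ strongly in $\mathrm{L}^2(\Omega\times[0,T])$, and Cauchy--Schwarz in $t$ yields $\int_\Omega\bigl(\int_0^TV_i^n-\int_0^TV_i^\infty\bigr)^2\le T\Vert V_i^n-V_i^\infty\Vert_{\mathrm{L}^2(\Omega\times[0,T])}^2\to0$, so $\int_0^TV_i^n\to\int_0^TV_i^\infty$ strongly in $\mathrm{L}^2(\Omega)$; these functions are bounded by the $n$-independent constant $CT$ (the uniform $\mathrm{L}^\infty$ bound of Lemma~\ref{strongCVofVi}), and $u\mapsto e^{-\beta_{VH}u}$ is Lipschitz on bounded sets, hence $e^{-\beta_{VH}\int_0^TV_i^n}\to e^{-\beta_{VH}\int_0^TV_i^\infty}$ strongly in $\mathrm{L}^2(\Omega)$. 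Since $H_n\rightharpoonup H_\infty$ weakly in $\mathrm{L}^2(\Omega)$ (first lemma of this section) and $0\le H_n\le H^0$, the weak--strong product converges, giving $\eta_n^T\to\eta_\infty^T$. Point (c) is dominated convergence: $\int_0^TV_i^\infty\uparrow\int_0^{+\infty}V_i^\infty<+\infty$ (finiteness being the linear exponential-decay estimate of \cite{girardinmaucourt2023} valid when $\lambda_1(\mathcal{L}_{V_s})>0$), with integrand $\le H_\infty\in\mathrm{L}^1(\Omega)$.

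The crux is (b). From $0\le e^{-b}-e^{-a}\le a-b$ for $a\ge b\ge0$ and $0\le H_n\le H^0$ one gets $|\eta_n-\eta_n^T|\le\beta_{VH}H^0\int_\Omega\int_T^{+\infty}V_i^n$, so it suffices to bound $V_i^n(x,t)$ by $C_0e^{-\mu_0 t}$ with $C_0,\mu_0>0$ independent of $n$. Since $V_i^n\le V^n:=V_i^n+V_s^n$ and $\partial_tV^n-\sigma_V\Delta V^n=(r_V^n-hP_n)V^n-s_V(V^n)^2$, the point is to control $P_n$ from below. By \cite{girardinmaucourt2023}, a uniform-in-$n$ positive lower bound for $P_n$ at a fixed time --- coming from the parabolic Harnack inequality, the principal part $\sigma_P\nabla\cdot(r_P^n\nabla(\,\cdot\,/r_P^n))$ being uniformly elliptic, together with $\int_\Omega P_0^n\to\int_\Omega P_0^\infty>0$ --- upgrades, via comparison with the single-species ideal-free logistic equation (whose renormalisation is squeezed below a spatially constant ODE profile converging to $1/s_P$ at rate $\underline{r_P}$), to $P_n\ge(1-\epsilon)r_P^n/s_P$ on $\Omega\times[t_\epsilon,+\infty)$ with $t_\epsilon$ independent of $n$. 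Writing $m_n:=-r_V^n+hr_P^n/s_P$ (so that $\mathcal{L}_{V_s}$ for the refuge $R_n$ is $-\sigma_V\Delta+m_n$), we then have $r_V^n-hP_n\le-m_n+\epsilon h\,\overline{r_P}/s_P$ on $[t_\epsilon,+\infty)$, and $A\,\varphi_{s,1}^{n}(x)\,e^{-\mu_0(t-t_\epsilon)}$ is a supersolution of the $V^n$-equation there, where $\varphi_{s,1}^{n}$ is the Neumann principal eigenfunction of $-\sigma_V\Delta+m_n$ with $\min_\Omega\varphi_{s,1}^{n}=1$ (hence bounded above uniformly, by elliptic regularity and the uniform coefficient bounds), $\mu_0:=\lambda_1(-\sigma_V\Delta+m_n)-\epsilon h\,\overline{r_P}/s_P$, and $A$ is taken from the $n$-independent $\mathrm{L}^\infty$ bound on $V^n(\cdot,t_\epsilon)$. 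Integrating the supersolution in space and time produces the desired uniform tail bound, provided $\mu_0$ is bounded below uniformly in $n$; this holds, for $\epsilon$ fixed small enough, because the Neumann principal eigenvalues $\lambda_1(-\sigma_V\Delta+m_n)$ converge to $\lambda_1(-\sigma_V\Delta+m_\infty)=m_\infty:=-r_V^\infty+hr_P^\infty/s_P$ --- positive under the hypothesis $\lambda_1(\mathcal{L}_{V_s})>0$ --- the eigenvalue convergence following from $m_n\rightharpoonup m_\infty$ weakly in $\mathrm{L}^2(\Omega)$ and the convergence of the associated Rayleigh quotients (upper bound by testing with constants; lower bound by passing to a weak $\mathrm{H}^1(\Omega)$, hence strong $\mathrm{L}^2(\Omega)$ by Rellich, limit of the normalised eigenfunctions, against which the weakly convergent $m_n$ may be paired). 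In particular $\lambda_1(-\sigma_V\Delta+m_n)\ge\tfrac12 m_\infty>0$ for $n$ large, which also guarantees each $\eta_n$ is finite. Alternatively --- and more economically --- one may invoke the linear decay estimates of \cite{girardinmaucourt2023} directly, observing that their constants depend only on the fixed coefficient bounds, on the $n$-independent $\mathrm{L}^\infty$ bound on the solutions, and on a positive lower bound for $\lambda_1(-\sigma_V\Delta+m_n)$, hence are uniform in $n$.

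I expect the main obstacle to be precisely step (b): turning the single-domain exponential decay into one uniform over the whole sequence of refuges forces one to verify both that $\lambda_1(-\sigma_V\Delta+m_n)$ stays bounded away from $0$ (eigenvalue homogenization, under merely weak convergence of the zeroth-order coefficient) and that the predator field relaxes to its quasi-stationary profile $r_P^n/s_P$ at an $n$-independent rate. Granting Lemma~\ref{strongCVofVi} and this uniform tail estimate, the finite-horizon passage to the limit in (a) and the double-limit bookkeeping are routine.
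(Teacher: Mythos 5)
Your proposal follows essentially the same route as the paper: finite-horizon strong convergence from Lemma~\ref{strongCVofVi}, a uniform-in-$n$ exponential tail bound on $V_i^n$ coming from the uniform positivity of the principal eigenvalues $\lambda_1^n(\mathcal{L}_{V_s})$ (the paper invokes the linear decay estimates of \cite{girardinmaucourt2023} together with the monotone convergence of $\lambda_1^n$ to the eigenvalue of the constant refuge --- precisely your ``more economical'' alternative in step (b)), followed by splitting the time integral at a large $T$ and pairing the weakly convergent $H_n$ with the strongly convergent exponential factor. The argument is correct and matches the paper's proof.
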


\begin{proof}
 For all $T>0$, by Lemma \ref{strongCVofVi}, the sequence $(V_i^n)$ strongly converges in $\mathrm{L}^2(\Omega\times[0,T])$ to $V_i^{\infty}$. By positivity of $\lambda_1(\mathcal{L}_{V_s})$, since $n\mapsto\lambda_1^n(\mathcal{L}_{V_s})$ is increasing and converges to the principal eigenvalue associated to the constant refuge $\mathcal{R}$ we denote $\lambda_1^{\infty}(\mathcal{L}_{V_s})$, by \cite{girardinmaucourt2023}, for a small enough $\varepsilon>0$, there exists a constant $C>0$ and $t_0>0$ such that, for all $x\in\Omega$, $t\geq t_0$ and $n\in\mathbb{N}$ large enough,
 \[
  V_i^n(x,t)\leq Ce^{-(\lambda_1^n(\mathcal{L}_{V_s})-h\varepsilon)(t-t_0)}\leq Ce^{-(\lambda_1(\mathcal{L}_{V_s})-h\varepsilon)(t-t_0)}
 \]
 and
 \[
  V_i^{\infty}(x,t)\leq Ce^{-(\lambda_1^{\infty}(\mathcal{L}_{V_s})-h\varepsilon)(t-t_0)}\leq Ce^{-(\lambda_1(\mathcal{L}_{V_s})-h\varepsilon)(t-t_0)}.
 \]
 
 Let $x\in\Omega$, $T\geq t_0$ such that $\dfrac{2C|\Omega|}{\lambda_1(\mathcal{L}_{V_s})-h\varepsilon}e^{-(\lambda_1(\mathcal{L}_{V_s})-h\varepsilon)(T-t_0)}\leq\varepsilon/2$ and $n\in\mathbb{N}$ verifying the above inequality, and such that $\Vert V_i^n-V_i^{\infty}\Vert_{\mathrm{L}^2(\Omega\times[0,T])}^2\leq\dfrac{\varepsilon}{2|\Omega|T}$.
 \[
 \begin{array}{ll}
  & \displaystyle\int_{\Omega}\left|\int_0^{+\infty}(V_i^n(x,t)-V_i^{\infty}(x,t))\mbox{d}t\right|\mbox{d}x\\
  \leq & \displaystyle\int_{\Omega}\left|\int_0^T(V_i^n(x,t)-V_i^{\infty}(x,t))\mbox{d}t\right|\mbox{d}x+\displaystyle\int_{\Omega}\left|\int_T^{+\infty}(V_i^n(x,t)-V_i^{\infty}(x,t))\mbox{d}t\right|\mbox{d}x\\
  \leq & \Vert V_i^n-V_i^{\infty}\Vert_{\mathrm{L}^2(\Omega\times[0,T])}^2|\Omega|T+2C\displaystyle\int_{\Omega}\int_T^{+\infty}e^{-(\lambda_1(\mathcal{L}_{V_s})-h\varepsilon)(t-t_0)}\mbox{d}t\mbox{d}x\\
  \leq & \varepsilon/2+\dfrac{2C|\Omega|}{\lambda_1(\mathcal{L}_{V_s})-h\varepsilon}e^{-(\lambda_1(\mathcal{L}_{V_s})-h\varepsilon)(T-t_0)}\\
  \leq & \varepsilon.
 \end{array}
 \]
 
 We have shown the convergence of $\int_0^{+\infty}V_i^n(\cdot,t)\mbox{d}t$ to $\int_0^{+\infty}V_i^{\infty}(\cdot,t)\mbox{d}t$ in $\mathrm{L}^1(\Omega)$. Up to extraction, the sequence $(\int_0^{+\infty}V_i^n)$ hereby converges almost everywhere in $\Omega$ to $\int_0^{+\infty}V_i^{\infty}$. By positivity of the sequence, and by the dominated convergence theorem, we obtain the convergence of $e^{-\beta_{VH}\int_0^{+\infty}V_i^n}$ to $e^{-\beta_{VH}\int_0^{+\infty}V_i^{\infty}}$ in $\mathrm{L}^1(\Omega)$.
 
 Let $n\in\mathbb{N}$ be large enough so that $\Vert e^{-\beta_{VH}\int_0^{+\infty}V_i^n}-e^{-\beta_{VH}\int_0^{+\infty}V_i^{\infty}}\Vert_{\mathrm{L}^1(\Omega)}\leq\varepsilon/(2H^0)$ and $\left|\displaystyle\int_{\Omega}(H_n-H_{\infty})e^{-\beta_{VH}\int_0^{+\infty}V_i^{\infty}}\right|\leq\varepsilon/2$.
 \[
 \begin{array}{ll}
  & \left|\displaystyle\int_{\Omega}H_ne^{-\beta_{VH}\int_0^{+\infty}V_i^n}-\displaystyle\int_{\Omega}H_{\infty}e^{-\beta_{VH}\int_0^{+\infty}V_i^{\infty}}\right|\\
  = & \left|\displaystyle\int_{\Omega}H_n(e^{-\beta_{VH}\int_0^{+\infty}V_i^n}-e^{-\beta_{VH}\int_0^{+\infty}V_i^{\infty}})+\displaystyle\int_{\Omega}(H_n-H_{\infty})e^{-\beta_{VH}\int_0^{+\infty}V_i^{\infty}}\right|\\
  \leq & H^0\displaystyle\int_{\Omega}|e^{-\beta_{VH}\int_0^{+\infty}V_i^n}-e^{-\beta_{VH}\int_0^{+\infty}V_i^{\infty}}|+\left|\displaystyle\int_{\Omega}(H_n-H_{\infty})e^{-\beta_{VH}\int_0^{+\infty}V_i^{\infty}}\right|\\
  \leq & \varepsilon.
 \end{array}
 \]
 
 Hence the convergence of $\eta_n$ to $\eta_{\infty}$.

\end{proof}

\section{Optimizers of the linearized harvest}

We devote the next section to the computation of what we call the linearized harvest $\eta_L$. We would like to justify the necessity of utilizing a linearized version of $\eta$ by briefly examining a straightforward scenario within our problem, wherein all spatial variables remain constant and specific parameter values are applied. Even in this case, the explicit computation of the solution for $V$ (let alone $V_i$) remains unattainable. When we combine the equations for $V_i$ and $V_s$, we obtain the following prey--predator system.

\begin{equation}\label{preypredsyst}
 \begin{cases}
  \partial_tV-\sigma_V\Delta V =r_V(x)V-s_VV^2-hPV\\
  \partial_t P-\sigma_P\overline{\mathcal{L}}(P)=r_P(x)P-s_PP^2+\gamma hPV
 \end{cases}
\end{equation}

Now, assume that $R$ is constant in space, so that $r_V$ and $r_P$ are constant, and assume that $V_0$ and $P_0$ are constant. To simplify even further, assume that $r_V=r_P=s_V=s_P=1$ and $(\gamma-1)h=-2$ (for instance, $\gamma=1/2$ and $h=4$). The system now writes:

\begin{equation}\label{preypredsystconst}
 \begin{cases}
  V'=V-V^2-hPV\\
  P'=P-P^2+\gamma hPV
 \end{cases}
\end{equation}

Denote $u=V+P$. $u$ solves
\[
 \begin{array}{ll}
  u' & =u-(V^2+P^2)+(\gamma-1)hPV\\
  & =u-u^2+2PV+(\gamma-1)hPV\\
  & =u-u^2,
 \end{array}
\]
which yields, for all $t>0$,
\[
 u(t)=\dfrac{1}{1+\left(\dfrac{1}{V_0+P_0}-1\right)e^{-t}},
\]
which means $V$ solves
\[
 \begin{array}{ll}
  V' & =V-V^2-h(u-V)V\\
  & =(1-hu)V+(h-1)V^2.
 \end{array}
\]

Although this equation is entirely decorrelated with the variable $P$, it remains too hard to compute explicitly or to calculate its time integral from $0$ to $+\infty$. As a result, we lack information regarding the derivative of $V$ with respect to $R$, and thus, we also lack information on the derivative of $\eta$ with respect to $R$. To the best of our knowledge, this renders the nonlinear optimization problem too complicated to address directly via $\eta$. However, we can assume that $R$ is constant, $V_0$ and $P_0=r_P/s_P$ constant, and $\gamma=0$, so that $P(t)=r_P/s_P$ for all $t>0$, hence $V$ solves
\[
 V'=(r_V-h\dfrac{r_P}{s_P})V-s_VV^2
\]
and one can compute, denoting $\hat{m}=hr_P/s_P-r_V$, for all $t>0$
\[
 V(t)=\dfrac{\hat{m}}{s_V}\left(\left(\dfrac{\hat{m}}{s_VV_0}+1\right)e^{\hat{m}t}-1\right)^{-1}
\]
and, if $\hat{m}$ is positive,
\[
 \begin{array}{ll}
  \displaystyle\int_0^{+\infty}V(t)\mathrm{d}t & =\dfrac{1}{s_V}\left[\ln\left(\dfrac{\hat{m}}{s_VV_0}+1-e^{-mt}\right)\right]_0^{+\infty}\\
  & =\dfrac{1}{s_V}\ln\left(1+\dfrac{s_VV_0}{\hat{m}}\right),
 \end{array}
\]
hence a sub-estimation of the harvest, that we call $\eta_V$:
\[
 \begin{array}{ll}
  \eta_V & =\displaystyle\int_{\Omega}He^{-\beta_{VH}\int_0^{+\infty}V}\\
  & =H|\Omega|\left(1+\dfrac{s_VV_0}{\hat{m}}\right)^{-\beta_{VH}/s_V}.
 \end{array}
\]

We can make explicit the $R$-dependency by plugging the expressions of $H$ and $\hat{m}$ in function of $R\in[0,1]$:
\[
 \eta_V=H^0|\Omega|(1-R)\left(1+\dfrac{s_VV_0}{\tilde{\xi}+\tilde{m}R}\right)^{-\beta_{VH}/s_V},
\]
where $\tilde{\xi}=hr_P^f/s_P-r_V^f$ and $\tilde{m}=h(r_P^r-r_P^f)/s_P-r_V^r+r_V^f$. We plot this function in Figure \ref{subharvestfunofR}.

\begin{center}
 \begin{figure}
  \includegraphics[scale=0.8]{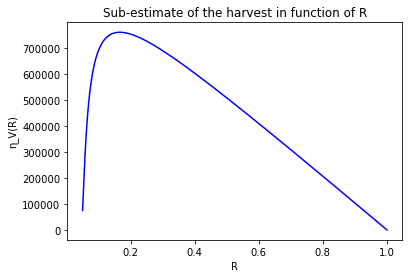}
  \caption{Sub-estimate of the harvest in function of a constant $R$ (with biologically consistent parameters, see \cite{girardinmaucourt2023}).}
  \label{subharvestfunofR}
 \end{figure}
\end{center}

A simple computation yields the optimal $R$ of $\eta_V$ we call $R_{\textnormal{opt}}^V$:
\[
 \begin{array}{ll}
  R_{\textnormal{opt}}^V= & \dfrac{1}{2\tilde{m}^2}\left(\sqrt{\tilde{m}^2(2\tilde{\xi}+s_VV_0-\beta_{VH}V_0)^2-4\tilde{m}^2(\tilde{\xi}^2+s_VV_0\tilde{\xi}-\beta_{VH}\tilde{m}V_0)}\right.\\
  & +\tilde{m}\left.(\beta_{VH}V_0-2\tilde{\xi}-s_VV_0)\right).
 \end{array}
\]

Here, $R_{\textnormal{opt}}^V\approx0,19$.

\textbf{Comment:} By doing a Taylor expansion of $\eta_V$ around $V_0=0$, one gets
\[
 \eta_V=H^0|\Omega|(1-R)\left(1-\dfrac{\beta_{VH}V_0}{\tilde{\xi}+\tilde{m}R}\right)+o(V_0),
\]
which is an expression similar to that of the linearized harvest in the constant case, that will be computed later on in Remark 1.

\subsection{Computation of $\eta_L$}

Let us now delve into the linearization. We suppose in the following that $\lambda_1(\mathcal{L}_{V_s})$, the principal eigenvalue of $-\sigma_V\Delta+\dfrac{h}{s_P}r_P-r_V$ is positive, so that $V$ converges to $0$ exponentially fast uniformly in $\Omega$ as $t\to+\infty$, as well as the exponential convergence of $P$ to $r_P/s_P$ (see \cite{girardinmaucourt2023}). This ensures the boundedness of $V$, $P$ in $\Omega\times\mathbb{R}_+^*$, and the existence of $\displaystyle\int_{\Omega}\int_0^{+\infty}\max(V,P-r_P/s_P)$.

Let us define some useful constants to lighten the writing:
\begin{itemize}
 \item $m=\dfrac{h}{s_P}(r_P^r-r_P^f)+d_V^r-d_V^f$,
 \item $\xi=\alpha+d_V^f+\dfrac{h}{s_P}r_P^f>0$.
\end{itemize}

We suppose in the following that $m$ is positive. This will ensure that placing refuges in the field increases $\lambda_1(\mathcal{L}_{V_s})$, and therefore is beneficial for a quicker eradication of the population $V$. In the case $m\leq 0$, the optimisation problem is trivial: $R=0$ yields the best harvest (which is $0$ in the case $\lambda_1(\mathcal{L}_{V_s})<0$).

Denote $\tilde{P}=P-r_P/s_P$ and
\[
 o(\Vert(V,\tilde{P})\Vert)=\underset{\Vert(V,\tilde{P})\Vert_{\mathrm{L}^{\infty}(\Omega\times\mathbb{R}_+^*)}\to0}{o}\left(\Vert(V,\tilde{P})\Vert_{\mathrm{L}^{\infty}(\Omega\times\mathbb{R}_+^*)}\right).
\]

We have
\[
 \dfrac{\Vert IV_s\Vert_{\mathrm{L}^{\infty}(\Omega\times\mathbb{R}_+^*)}}{\Vert(V,\tilde{P})\Vert_{\mathrm{L}^{\infty}(\Omega\times\mathbb{R}_+^*)}}\leq\Vert I\Vert_{\mathrm{L}^{\infty}(\Omega\times\mathbb{R}_+^*)},
\]
and this quantity converges to $0$ as $\Vert(V,\tilde{P})\Vert_{\mathrm{L}^{\infty}(\Omega\times\mathbb{R}_+^*)}$ goes to $0$, since $I(x,t)=H(x)\left(1-\exp\left(-\beta_{VH}\int_0^tV_i(x,s)\mathrm{d}s\right)\right)$. Therefore, $IV_s=o(\Vert(V,\tilde{P})\Vert)$. Similarly, $VV_i=o(\Vert(V,\tilde{P})\Vert)$ and $\tilde{P}V_i=o(\Vert(V,\tilde{P})\Vert)$.

The population $V_i$ satisfies
\[
 \begin{array}{ll}
  \partial_tV_i & =\sigma_V\Delta V_i-(\alpha+d_V)V_i+\beta_{HV}IV_s-s_VVV_i-hPV_i\\
   & =\sigma_V\Delta V_i-(\alpha+d_V+h\dfrac{r_P}{s_P})V_i+\beta_{HV}IV_s-s_VVV_i-h\tilde{P}V_i\\
   & =\sigma_V\Delta V_i-(\alpha+d_V+h\dfrac{r_P}{s_P})V_i+o(\Vert(V,\tilde{P})\Vert)\\
   & =\sigma_V\Delta V_i-(\xi+mR)V_i+o(\Vert(V,\tilde{P})\Vert).
 \end{array}
\]

Denote
\[
 o\left(\int_0^{+\infty}\max(V,\tilde{P})\right)=\underset{\Vert(V,\tilde{P})\Vert_{\mathrm{L}^{\infty}(\Omega\times\mathbb{R}_+^*)}\to0}{o}\left(\int_0^{+\infty}\max(V,\tilde{P})\right)
\]
and
\[
 o\left(\int_{\Omega}\int_0^{+\infty}\max(V,\tilde{P})\right)=\underset{\Vert(V,\tilde{P})\Vert_{\mathrm{L}^{\infty}(\Omega\times\mathbb{R}_+^*)}\to0}{o}\left(\int_{\Omega}\int_0^{+\infty}\max(V,\tilde{P})\right).
\]

We have
\[
 \left\Vert \dfrac{\displaystyle\int_0^{+\infty}IV_s}{\displaystyle\int_0^{+\infty}\max(V,\tilde{P})}\right\Vert_{\mathrm{L}^{\infty}(\Omega)}\leq\Vert I\Vert_{\mathrm{L}^{\infty}(\Omega\times\mathbb{R}_+^*)},
\]
therefore, $\int_0^{+\infty}IV_s=o\left(\int_0^{+\infty}\max(V,\tilde{P})\right)$.

Similarly, $\int_0^{+\infty}VV_i=o\left(\int_0^{+\infty}\max(V,\tilde{P})\right)$ and $\int_0^{+\infty}\tilde{P}V_i=o\left(\int_0^{+\infty}\max(V,\tilde{P})\right)$. The same work can be done when integrating over $\Omega$.

Integrating in time between $0$ and $+\infty$, and because $V\underset{t\to+\infty}{\longrightarrow}0$,
\[
 -V_{i,0}=\sigma_V\Delta\displaystyle\int_0^{+\infty}V_i-(\xi+mR)\int_0^{+\infty}V_i+o\left(\int_0^{+\infty}\max(V,\tilde{P})\right).
\]

Integrating over $\Omega$,
\[
 \begin{array}{ll}
  -\displaystyle\int_{\Omega}V_{i,0}= & \sigma_V\displaystyle\int_{\Omega}\Delta\int_0^{+\infty}V_i-\xi\int_{\Omega}\int_0^{+\infty}V_i-m\int_{\Omega}\left(R\int_0^{+\infty}V_i\right)\\
  & +o\left(\displaystyle\int_{\Omega}\int_0^{+\infty}\max(V,\tilde{P})\right).
 \end{array}
\]

Because of the Neumann boundary conditions,
\[
 \int_{\Omega}\Delta\int_0^{+\infty}V_i=\left[\nabla\int_0^{+\infty}V_i\right]_{\partial\Omega}=0.
\]

As a consequence,
\[
 m\int_{\Omega}\left(R\int_0^{+\infty}V_i\right)=\displaystyle\int_{\Omega}V_{i,0}-\xi\int_{\Omega}\int_0^{+\infty}V_i+o\left(\int_{\Omega}\int_0^{+\infty}\max(V,\tilde{P})\right).
\]

Denote
\[
 o\left(\int_0^{+\infty}V_i\right)=\underset{\Vert(V,\tilde{P})\Vert_{\mathrm{L}^{\infty}(\Omega\times\mathbb{R}_+^*)}\to0}{o}\left(\int_0^{+\infty}V_i\right).
\]

On the other hand, using a Taylor expansion of the exponential function, the harvest verifies
\[
 \begin{array}{ll}
  \displaystyle\int_{\Omega}H-I_{\infty} & =\displaystyle\int_{\Omega}He^{-\beta_{VH}\int_0^{+\infty}V_i}\\
  & =\displaystyle\int_{\Omega}H\left(1-\beta_{VH}\int_0^{+\infty}V_i+o\left(\int_0^{+\infty}V_i\right)\right)\\
  & =\displaystyle\int_{\Omega}H-\beta_{VH}\displaystyle\int_{\Omega}\left(H\int_0^{+\infty}V_i\right)+o\left(\int_0^{+\infty}V_i\right),\\
 \end{array}
\]
where we used the boundedness of $\Omega$ and $H$ to get $\int_{\Omega}Ho\left(\int_0^{+\infty}V_i\right)=o\left(\int_0^{+\infty}V_i\right)$.

With $H=H^0(1-R)$,
{\small\[
 \begin{array}{lll}
  \displaystyle\int_{\Omega}H-I_{\infty} & = & |\Omega|H^0-H^0\displaystyle\int_{\Omega}R-\beta_{VH}H^0\displaystyle\int_{\Omega}\left(\int_0^{+\infty}V_i\right)\\
  && +\beta_{VH}H^0\displaystyle\int_{\Omega}\left(R\int_0^{+\infty}V_i\right)+o\left(\int_0^{+\infty}V_i\right)\\
  & = &|\Omega|H^0-H^0\displaystyle\int_{\Omega}R-\beta_{VH}H^0\displaystyle\int_{\Omega}\left(\int_0^{+\infty}V_i\right)\\
  && +\dfrac{\beta_{VH}H^0}{m}\left(\displaystyle\int_{\Omega}V_{i,0}-\xi\int_{\Omega}\int_0^{+\infty}V_i+o\left(\int_{\Omega}\int_0^{+\infty}\max(V,\tilde{P})\right)\right)\\
  && +o\left(\displaystyle\int_0^{+\infty}V_i\right)\\
  & = & |\Omega|H^0-H^0\displaystyle\int_{\Omega}R+\dfrac{\beta_{VH}H^0}{m}\displaystyle\int_{\Omega}V_{i,0}\\
  && -\beta_{VH}H^0(1+\dfrac{\xi}{m})\displaystyle\int_{\Omega}\left(\int_0^{+\infty}V_i\right)+o\left(\displaystyle\int_0^{+\infty}V_i\right).\\
 \end{array}
\]}

We have used the uniform Harnack inequality (see \cite{huska}, Theorem 2.5) to get
\[
 o\left(\displaystyle\int_{\Omega}\int_0^{+\infty}\max(V,\tilde{P})\right)=o\left(\displaystyle\int_0^{+\infty}V_i\right),
\]
which also gives
\[
 -\sigma_V\Delta\displaystyle\int_0^{+\infty}V_i+(\xi+mR)\int_0^{+\infty}V_i=V_{i,0}+o\left(\displaystyle\int_0^{+\infty}V_i\right).
\]

For a test function $\varphi\in\mathcal{C}^{2+\alpha}(\Omega)$ satisfying $\partial_n\varphi=0$, after integrating by part,
\[
 \displaystyle\int_{\Omega}\left(\left(-\sigma_V\Delta\varphi+(\xi+mR)\varphi\right)\int_0^{+\infty}V_i\right)=\displaystyle\int_{\Omega}\varphi V_{i,0}+o\left(\displaystyle\int_0^{+\infty}V_i\right).
\]

We now look for solutions of
\[
 -\sigma_V\Delta\varphi+(\xi+mR)\varphi=1
\]
supplemented with Neumann boundary conditions.

By theorem $6.31$ of \cite{gilbargtrudinger} there exists a unique solution in $\mathcal{C}^{2+\alpha}(\Omega)$. Denote $\varphi_R$ this solution. Injecting
\[
 \displaystyle\int_{\Omega}\left(\int_0^{+\infty}V_i\right)=\int_{\Omega}\varphi_R V_{i,0}+o\left(\displaystyle\int_0^{+\infty}V_i\right).
\]
in the equation of the harvest, one gets
\[
\begin{array}{ll}
 & \displaystyle\int_{\Omega}H-I_{\infty}+o\left(\displaystyle\int_0^{+\infty}V_i\right)\\
 = & |\Omega|H^0-H^0\displaystyle\int_{\Omega}R+\dfrac{\beta_{VH}H^0}{m}\displaystyle\int_{\Omega}V_{i,0}-\beta_{VH}H^0(1+\dfrac{\xi}{m})\int_{\Omega}\varphi_R V_{i,0}\\
 = & H^0\left(|\Omega|-\displaystyle\int_{\Omega}R\right)+\dfrac{\beta_{VH}H^0}{m}\left(\displaystyle\int_{\Omega}V_{i,0}-(\xi+m)\int_{\Omega}\varphi_R V_{i,0}\right).
\end{array}
\]

From now on, we denote
\[
 \eta_L(R,V_{i,0})=H^0\left(|\Omega|-\displaystyle\int_{\Omega}R\right)+\dfrac{\beta_{VH}H^0}{m}\left(\displaystyle\int_{\Omega}V_{i,0}-(\xi+m)\int_{\Omega}\varphi_R V_{i,0}\right),
\]
the linearized harvest.

\textbf{Important note:} The domain of applicability for the preceding computations leading to the linearized harvest, specifically the range of parameters denoted as $X$ where $\int_0^{+\infty}V_i$ is ``sufficiently small'', remains uncertain. For all the results presented in this paper, we assume that $X$ encompasses the entire set of admissible parameters where $\lambda_1(\mathcal{L}_{V_s})$ is positive. This assumption is likely flawed, as we conjectured that $\int_0^{+\infty}V_i$ diverges as $\lambda_1(\mathcal{L}_{V_s})$ approaches zero. We acknowledge that this assumption severs the connection between the linearized harvest and the actual harvest. Nevertheless, our results (with different boundary) should still hold within a closed, convex, and bounded set. Furthermore, we know that $X$ is not empty, as evidenced by the case where $V_0=0$ resulting in $\int_0^{+\infty}V_i=0$. Supposing that $X$ possesses non-empty interior within the space of all admissible parameters, then $X$ contains a closed, convex, and bounded region. We hope that the existence of this region renders the assumption more acceptable. Moreover, in numerical experiments involving a biologically consistent parameter set (details available in \cite{girardinmaucourt2023} for the sources and values of the chosen parameters), the harvest and linearized harvest are very close when $\lambda_1(\mathcal{L}_{V_s})$ is positive (even when it is small), as shown in Figure \ref{harvestfunofmu}. This suggests that $X$ covers a significant portion of the entire set of admissible parameters within the subset where $\lambda_1(\mathcal{L}_{V_s})$ is positive. In the following sections, we look for optimizers of the linearized harvest, and we hope that the numerical simulations provided suffice to convince the relevance of studying this quantity instead of the real harvest. As seen in Figures \ref{harvestfunofmubigHV} and \ref{harvestfunofmubigVH}, when the transmission rates are very large, the two harvests become further apart, which is consistent with the fact that $\int_0^{+\infty}V_{i,0}$ becomes larger as $\beta_{HV},\beta_{VH}\to+\infty$.

\begin{center}
 \begin{figure}
  \includegraphics[scale=0.8]{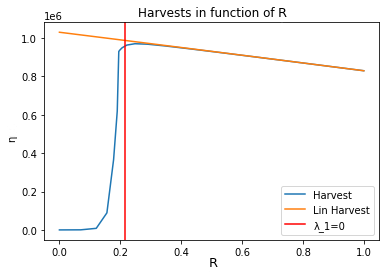}
  \caption{Harvest and linearized harvest in function of a constant $R$ (with biologically consistent parameters, see \cite{girardinmaucourt2023}). Explicit formula for $\eta_L$ can be found in Remark 1.}
  \label{harvestfunofmu}
 \end{figure}
\end{center}

\begin{center}
 \begin{figure}
  \includegraphics[scale=0.8]{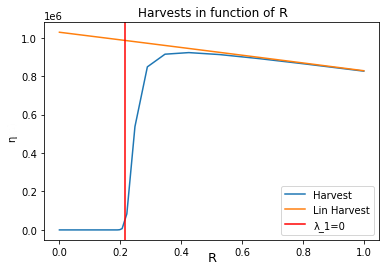}
  \caption{Harvest and linearized harvest in function of a constant $R$ (with $\beta_{HV}$ ten times bigger).}
  \label{harvestfunofmubigHV}
 \end{figure}
\end{center}

\begin{center}
 \begin{figure}
  \includegraphics[scale=0.8]{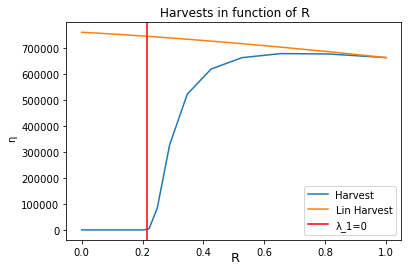}
  \caption{Harvest and linearized harvest in function of a constant $R$ (with $\beta_{VH}$ ten times bigger).}
  \label{harvestfunofmubigVH}
 \end{figure}
\end{center}

\textbf{Remark 1:} In figure \ref{harvestfunofmu}, we compute the harvest $\eta$ and linearized harvest $\eta_L$ in the case where $R$ and $V_{i,0}$ are constant in space. Here, $\eta_L$ is explicit since $\varphi_R$ can be computed: $\varphi_R=1/(\xi+mR)$ yields
\[
 \begin{array}{ll}
  \eta_L(R,V_{i,0}) & =H^0\left(|\Omega|-\displaystyle\int_{\Omega}R\right)+\dfrac{\beta_{VH}H^0}{m}\left(1-\dfrac{\xi+m}{\xi+mR}\right)\displaystyle\int_{\Omega}V_{i,0}\\
  & =H^0|\Omega|\left(1-R\right)+\dfrac{\beta_{VH}H^0}{m}\left(\dfrac{mR-m}{\xi+mR}\right)|\Omega|V_{i,0}\\
  & =H^0|\Omega|(1-R)\left(1-\dfrac{\beta_{VH}V_{i,0}}{\xi+mR}\right).
 \end{array}
\]
Although it looks like a linear function of $R$, it is not. With our parameter values (see \cite{girardinmaucourt2023} for some details), $\eta_L(R)\approx1058823(1-0.2R)\left(1-\dfrac{0,0448}{1.58+0,468R}\right)$. We have taken an initial condition $V_{i,0}$ of one percent of the carrying capacity of $V$.

\textbf{Remark 2:} One can observe that the harvest is positive for some negative values of $\lambda_1(\mathcal{L}_{V_s})$. This is, to our understanding, due to numerical imprecisions: when $\lambda_1(\mathcal{L}_{V_s})$, while still negative, becomes very close to $0$, $V_i$ diverges to $+\infty$ much slower (at a rate that is roughly $e^{-\lambda_1(\mathcal{L}_{V_s})t}$), therefore $e^{-\int_0^{+\infty}V_i}$ (that we approximate by computing $e^{-\int_0^T V_i}$ for a large $T$) becomes at a greater distance from $0$. This phenomenon is less visible when we increase $T$, as can be observed in Figure \ref{harvestfunofmubigT}.

\begin{center}
 \begin{figure}
  \includegraphics[scale=0.8]{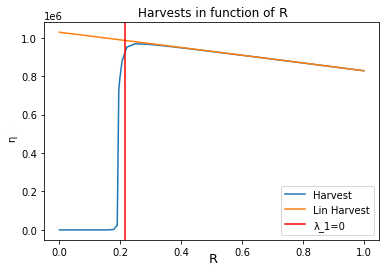}
  \caption{Harvest and linearized harvest in function of a constant $R$ (with $T$ three times bigger).}
  \label{harvestfunofmubigT}
 \end{figure}
\end{center}

We denote $\mathcal{C}_b^{2+\alpha}(\Omega)$ the space of bounded $\mathcal{C}^{2+\alpha}$ functions defined on $\Omega$. In the proofs of Lemma \ref{concavitylinharvest} and Theorem \ref{optconst}, we use differential calculus in Banach spaces (in $\mathrm{L}^{\infty}(\Omega)$ and in $\mathcal{C}_b^{2+\alpha}(\Omega)$). As the computation of the partial derivative of $\varphi_R$ or $\eta_L$ with respect to $R$ is usual, we freely use it without delving into its details.

In the context of the optimization problem, the first crucial lemma is the concavity of the application $R\mapsto\eta_L(R,V_{i,0})$:

\begin{lemma}\label{concavitylinharvest}
 Fix $V_{i,0}\in\mathcal{C}^{2+\alpha}(\Omega)$. Then, the application
 \[
  \left\lbrace \begin{array}{lll}
   \mathcal{C}_b^{2+\alpha}(\Omega) & \to & \mathbb{R}\\
   R & \mapsto & \eta_L(R,V_{i,0})
  \end{array}\right.
 \]
 is concave and coercive : $\eta_L(R,V_{i,0})\underset{\Vert R\Vert_{\mathcal{C}_b^{2+\alpha}(\Omega)}\to+\infty}{\longrightarrow}-\infty$.
\end{lemma}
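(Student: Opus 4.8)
The plan is to reduce everything to the behaviour of the linear map $R \mapsto \varphi_R$, where $\varphi_R$ solves $-\sigma_V\Delta\varphi_R + (\xi+mR)\varphi_R = 1$ with Neumann conditions. Recall the expression
\[
\eta_L(R,V_{i,0}) = H^0\Bigl(|\Omega| - \int_\Omega R\Bigr) + \frac{\beta_{VH}H^0}{m}\Bigl(\int_\Omega V_{i,0} - (\xi+m)\int_\Omega \varphi_R V_{i,0}\Bigr).
\]
The term $R \mapsto H^0(|\Omega| - \int_\Omega R)$ is affine, hence both concave and, by itself, linearly decreasing in the $\mathrm{L}^1$-direction; it contributes nothing to concavity but is the source of coercivity. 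The constant $\frac{\beta_{VH}H^0}{m}\int_\Omega V_{i,0}$ is irrelevant. So the crux is to show that $R \mapsto -\int_\Omega \varphi_R V_{i,0}$ is concave, equivalently (since $\xi+m>0$, $\beta_{VH}H^0/m>0$) that $R \mapsto \int_\Omega \varphi_R V_{i,0}$ is convex, using that $V_{i,0} \geq 0$. Because $\frac{\partial^2}{\partial R^2}$ of the affine part vanishes, concavity of $\eta_L$ is exactly convexity of $R \mapsto \int_\Omega \varphi_R V_{i,0}$.

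For convexity I would compute the first and second Gateaux derivatives of $R \mapsto \varphi_R$ in a direction $S \in \mathcal{C}_b^{2+\alpha}(\Omega)$. Differentiating the defining PDE, $\psi := \partial_R\varphi_R[S]$ solves $-\sigma_V\Delta\psi + (\xi+mR)\psi = -mS\varphi_R$ with Neumann conditions; write $\psi = -m\,\mathcal{T}_R(S\varphi_R)$ where $\mathcal{T}_R := (-\sigma_V\Delta + (\xi+mR))^{-1}$ is the (positive, self-adjoint on $\mathrm{L}^2$) solution operator. Differentiating once more, $\chi := \partial^2_R\varphi_R[S,S]$ solves $-\sigma_V\Delta\chi + (\xi+mR)\chi = -2mS\psi$, so $\chi = -2m\,\mathcal{T}_R(S\psi) = 2m^2\,\mathcal{T}_R\bigl(S\,\mathcal{T}_R(S\varphi_R)\bigr)$. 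Then
\[
\frac{\partial^2}{\partial R^2}\Bigl(\int_\Omega \varphi_R V_{i,0}\Bigr)[S,S] = \int_\Omega \chi\,V_{i,0} = 2m^2 \int_\Omega \mathcal{T}_R\bigl(S\,\mathcal{T}_R(S\varphi_R)\bigr) V_{i,0} = 2m^2 \int_\Omega \bigl(S\,\mathcal{T}_R(S\varphi_R)\bigr)\mathcal{T}_R(V_{i,0}),
\]
using self-adjointness of $\mathcal{T}_R$. Since $V_{i,0} \geq 0$ and $\mathcal{T}_R$ is positivity preserving (maximum principle), $\mathcal{T}_R(V_{i,0}) \geq 0$, and likewise $\varphi_R = \mathcal{T}_R(1) > 0$; writing $w := \mathcal{T}_R(S\varphi_R)$, which solves $-\sigma_V\Delta w + (\xi+mR)w = S\varphi_R$, the integrand is $S w\,\mathcal{T}_R(V_{i,0})$. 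To see this is nonnegative after integration, I would use the weak formulation: $\int_\Omega S\varphi_R\, w\, \frac{\mathcal{T}_R(V_{i,0})}{\varphi_R}$ is not obviously signed pointwise, so instead test the equation for $w$ against $w\,\mathcal{T}_R(V_{i,0})/\varphi_R$ — but the cleanest route is to observe $\int_\Omega S w\,\mathcal{T}_R(V_{i,0}) = \int_\Omega \mathcal{T}_R(S w)\,V_{i,0}$ and $\int_\Omega \mathcal{T}_R(S w)\, V_{i,0}$ relates back to $\int_\Omega (Sw)\,\mathcal{T}_R(V_{i,0})$; to actually extract nonnegativity one uses that $S w = S\,\mathcal{T}_R(S\varphi_R)$ and the substitution $u := w/\varphi_R$, which by a standard computation (the "ground-state transform") satisfies a divergence-form equation $-\sigma_V \nabla\cdot(\varphi_R^2 \nabla u) = S\varphi_R^2$, giving $\int_\Omega S\varphi_R^2 u^2 \,\theta = \sigma_V\int_\Omega \varphi_R^2\,\theta|\nabla u|^2 + (\text{lower order}) \geq 0$ for a suitable positive weight $\theta$ built from $\mathcal{T}_R(V_{i,0})/\varphi_R$; I expect the signs to work out because $\mathcal{T}_R(V_{i,0})/\varphi_R$ is itself a supersolution-type quantity. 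This ground-state-transform computation, and verifying the weight is admissible, is the main obstacle.

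For coercivity I would argue as follows. The map $R \mapsto \int_\Omega \varphi_R V_{i,0}$ is bounded: since $\varphi_R = \mathcal{T}_R(1) \leq 1/(\xi) \cdot (\text{something})$ — more carefully, integrating the PDE over $\Omega$ gives $\int_\Omega(\xi+mR)\varphi_R = |\Omega|$, and since $R \geq 0$ this forces $\xi\int_\Omega\varphi_R \leq |\Omega|$ when combined with $\varphi_R > 0$, and $\|\varphi_R\|_\infty \leq 1/\xi$ by the maximum principle (the constant $1/\xi$ is a supersolution when $R \geq 0$, since $\xi + mR \geq \xi$). Hence the second bracket in $\eta_L$ stays bounded as $\|R\|_{\mathcal{C}_b^{2+\alpha}} \to +\infty$. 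But $\|R\|_{\mathcal{C}_b^{2+\alpha}(\Omega)} \to +\infty$ together with $R \geq 0$ — wait, $R$ here ranges over all of $\mathcal{C}_b^{2+\alpha}(\Omega)$, not just $[0,1]$-valued functions; still, for $\eta_L$ to be well defined via the PDE for $\varphi_R$ we need $\xi + mR$ to stay positive, so implicitly $R$ is bounded below. I would therefore note that on the relevant domain $R$ is bounded below, and $\|R\|_{\mathcal{C}_b^{2+\alpha}} \to +\infty$ forces, say, $\|R\|_\infty \to +\infty$ hence $\int_\Omega R \to +\infty$ (using $R$ bounded below and the Harnack/elliptic control, or simply restricting to $R$ for which $\int_\Omega R$ controls the norm up to the fixed lower bound). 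Since the $-H^0\int_\Omega R$ term then tends to $-\infty$ while the other terms remain bounded, $\eta_L \to -\infty$. I would present this coercivity part briefly, flagging that on the physically relevant set $\{0 \leq R \leq 1\}$ coercivity is vacuous and the statement is really about the extension used for convex-analytic arguments in Theorem~\ref{optconst}.
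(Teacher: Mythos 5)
Your overall architecture matches the paper's: concavity reduces to convexity of $R\mapsto\int_\Omega\varphi_RV_{i,0}$ (with $V_{i,0}\ge 0$), and coercivity follows from the affine term $-H^0\int_\Omega R$ once $\int_\Omega\varphi_RV_{i,0}$ is bounded, which you obtain exactly as the paper does, from the supersolution $1/\xi$ giving $0<\varphi_R\le 1/\xi$. Your caveats about coercivity (that on $\mathcal{C}^{2+\alpha}_b(\Omega)$ a blowing-up norm need not force $\int_\Omega R\to+\infty$, and that $\xi+mR$ must stay positive for $\varphi_R$ to be defined) are legitimate and apply equally to the paper's one-line coercivity argument, which also silently assumes $R\ge0$.

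The genuine gap is the concavity step, which you have correctly located but not closed. Your computation $D^2\bigl(\int_\Omega\varphi_RV_{i,0}\bigr)[S,S]=2m^2\int_\Omega S\,\mathcal{T}_R(S\varphi_R)\,\mathcal{T}_R(V_{i,0})$ is right, and you are right that for sign-changing $S$ the integrand has no pointwise sign; but the ``ground-state transform with weight $\mathcal{T}_R(V_{i,0})/\varphi_R$'' you sketch does not obviously terminate: after the weighted integration by parts the zeroth-order coefficient involves $(\xi+mR)h-\tfrac{\sigma_V}{2}\Delta h$ with $h=\mathcal{T}_R(V_{i,0})/\varphi_R$, and this quotient satisfies no equation from which a sign can be read off, so the argument ends where it started. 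For comparison, the paper's proof is a one-line appeal to the maximum principle applied to $-\sigma_V\Delta(\partial_R\varphi_R)+(\xi+mR)\partial_R\varphi_R=-m\varphi_R$ and to the analogous equation for $\partial_R^2\varphi_R$, concluding $\partial_R\varphi_R<0$ and $\partial_R^2\varphi_R>0$ pointwise --- an argument that is immediate for perturbation directions of constant sign but, as your analysis makes explicit, does not by itself cover sign-changing directions, which is what concavity along a segment from $R_1$ to $R_2$ requires. One way to actually close it: write $\varphi_R(x)=\int_0^{+\infty}\mathbb{E}_x\bigl[\exp\bigl(-\int_0^t(\xi+mR(B_s))\,\mathrm{d}s\bigr)\bigr]\,\mathrm{d}t$ via Feynman--Kac for the reflected diffusion generated by $\sigma_V\Delta$; each integrand is the exponential of an affine functional of $R$, hence convex in $R$, so $R\mapsto\varphi_R(x)$ is convex pointwise and $R\mapsto\int_\Omega\varphi_RV_{i,0}$ is convex for $V_{i,0}\ge0$. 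As it stands, your proposal establishes coercivity (modulo the domain caveats you flag) but leaves concavity unproven.
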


\begin{proof}
 By Lemma 1.4 of Chapter 3 of \cite{pao} applied on
 \[
  -\sigma_V\Delta\varphi_R+(\xi+mR)\varphi_R=1,
 \]
 the function $\varphi_R$ is positive in $\Omega$.
 
 By the same lemma, we obtain consequently the negativity of $\partial_R\varphi_R$, solving
 \[
  -\sigma_V\Delta\partial_R\varphi_R+(\xi+mR)\partial_R\varphi_R=-m\varphi_R,
 \]
 and the positivity of $\partial_R^2\varphi_R$ in $\Omega$. The application $R\mapsto\eta_L(R,V_{i,0})$ is hereby concave.
 
 The linearized harvest has the following form:
 \[
  \eta_L(R,V_{i,0})=C_1+C_2\int_{\Omega}V_{i,0}-C_3\int_{\Omega}R-C_4\int_{\Omega}\varphi_RV_{i,0},
 \]
 with $C_1,C_2,C_3,C_4>0$. Hence, to prove its coercivity, it suffices to show the boundedness of $\int_{\Omega}\varphi_RV_{i,0}$ when $\Vert R\Vert_{\mathcal{C}_b^{2+\alpha}(\Omega)}\to+\infty$. By nonnegativity of $m$ and $R$, $1/\xi$ is a supersolution of the Neumann system solved by $\varphi_R$, hence $\varphi_R\leq 1/\xi$ and the coercivity of $\eta_L$ follows.

\end{proof}

\textbf{Comment:} The concavity of $R\mapsto\eta_L(R,V_{i,0})$ gives interesting insights on the optimal refuge, $R_{\textnormal{opt}}^L:=\underset{R\in\mathcal{C}^{2+\alpha}(\Omega,[0,1])}{\operatorname{argmax}}\;\eta_L(R,V_{i,0})$. For instance, there exists $\beta_{VH}$ small enough such that
\[
 \partial_R\eta_L=-H^0\int_{\Omega}\cdot-\beta_{VH}H^0(1+\dfrac{\xi}{m})\int_{\Omega}\partial_R\varphi_RV_{i,0}\cdot
\]
is negative, and in this case $R_{\textnormal{opt}}^L\equiv 0$. Conversely, there exists $\beta_{VH}$ large enough such that $\partial_R\eta_L$ is positive (by negativity of $\partial_R\varphi_R$), and in this case $R_{\textnormal{opt}}^L\not\equiv 0$. The same comment can be made with $\Vert V_{i,0}\Vert$ instead of $\beta_{VH}$.

A natural question can arise on the convergence of the linearized harvest when the frequency of the refuge goes to infinity. We answer the question in the following proposition.

\begin{proposition}
 In the context of the homogenization problem (see Section 3 for the details), $\eta_L(R_n,V_{i,0})\underset{n\to+\infty}{\longrightarrow}\eta_L(\mathcal{R},V_{i,0})$.
\end{proposition}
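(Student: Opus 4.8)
The plan is to exploit the fact that the refuge enters the equation $-\sigma_V\Delta\varphi_R+(\xi+mR)\varphi_R=1$ defining $\varphi_R$ only through the zeroth-order coefficient, so that — in contrast with the predator equation of Section 3 — no two-scale corrector is needed and the homogenized operator is simply $-\sigma_V\Delta+(\xi+m\mathcal{R})$, with the averaged potential.

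First I would dispose of the explicit terms. Since $\langle R_n\rangle_\Omega=\mathcal{R}$ for every $n$, we have $\int_\Omega R_n=|\Omega|\mathcal{R}$, so in
\[
\eta_L(R_n,V_{i,0})=H^0\Bigl(|\Omega|-\int_\Omega R_n\Bigr)+\frac{\beta_{VH}H^0}{m}\Bigl(\int_\Omega V_{i,0}-(\xi+m)\int_\Omega\varphi_{R_n}V_{i,0}\Bigr)
\]
the first two of these integrals are independent of $n$; hence it suffices to prove $\int_\Omega\varphi_{R_n}V_{i,0}\to\int_\Omega\varphi_{\mathcal{R}}V_{i,0}$.

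Next I would obtain uniform estimates on $(\varphi_{R_n})$. As in the proof of Lemma \ref{concavitylinharvest}, $1/\xi$ is a supersolution of the Neumann problem solved by $\varphi_{R_n}$ (using $m,R_n\geq0$), so by the maximum principle $0<\varphi_{R_n}\leq1/\xi$ uniformly in $n$; testing the equation against $\varphi_{R_n}$ and using the Neumann condition gives $\sigma_V\int_\Omega|\nabla\varphi_{R_n}|^2\leq\int_\Omega\varphi_{R_n}\leq|\Omega|/\xi$, so $(\varphi_{R_n})$ is bounded in $\mathrm{H}^1(\Omega)$. Up to a subsequence, $\varphi_{R_n}\rightharpoonup\varphi_\ast$ weakly in $\mathrm{H}^1(\Omega)$ and, by Rellich--Kondrachov, strongly in $\mathrm{L}^2(\Omega)$. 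Passing to the limit in the weak formulation
\[
\sigma_V\int_\Omega\nabla\varphi_{R_n}\cdot\nabla\zeta+\int_\Omega(\xi+mR_n)\varphi_{R_n}\zeta=\int_\Omega\zeta,\qquad\zeta\in\mathcal{C}^\infty(\overline\Omega),
\]
is then routine: the diffusion term passes by weak convergence of the gradients, and the potential term is the pairing of the $\mathrm{L}^\infty$-bounded sequence $(\xi+mR_n)\zeta$, which converges weakly in $\mathrm{L}^2(\Omega)$ to $(\xi+m\mathcal{R})\zeta$ thanks to the weak convergence $R_n\rightharpoonup\mathcal{R}$ in $\mathrm{L}^2(\Omega)$ established in Section 3, against the strongly convergent sequence $\varphi_{R_n}$. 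By density of $\mathcal{C}^\infty(\overline\Omega)$ in $\mathrm{H}^1(\Omega)$ and uniqueness of the Neumann solution (Theorem 6.31 of \cite{gilbargtrudinger}), $\varphi_\ast=\varphi_{\mathcal{R}}$; since the limit does not depend on the subsequence, $\varphi_{R_n}\to\varphi_{\mathcal{R}}$ strongly in $\mathrm{L}^2(\Omega)$ along the full sequence.

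Finally, $V_{i,0}\in\mathrm{L}^2(\Omega)$, so $\int_\Omega\varphi_{R_n}V_{i,0}\to\int_\Omega\varphi_{\mathcal{R}}V_{i,0}$ and therefore $\eta_L(R_n,V_{i,0})\to\eta_L(\mathcal{R},V_{i,0})$. The only step requiring care is the passage to the limit in $(\xi+mR_n)\varphi_{R_n}$, where a priori both factors converge only weakly: this is precisely where the compactness from the uniform $\mathrm{H}^1$ bound is exploited, turning that product into a weak--strong one — and this is also the reason the oscillating corrector that was necessary for the predator equation in Section 3 is not needed here.
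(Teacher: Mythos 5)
Your proof is correct and follows essentially the same route as the paper: uniform $\mathrm{H}^1(\Omega)$ bounds on $\varphi_{R_n}$, Rellich--Kondrachov compactness, passage to the limit in the weak formulation using the weak $\mathrm{L}^2$ convergence $R_n\rightharpoonup\mathcal{R}$ paired against the strong convergence of $\varphi_{R_n}$, and identification of the limit by uniqueness. The only cosmetic difference is that the paper integrates by parts twice to put the Laplacian on the test function rather than keeping the gradient form, and your explicit treatment of the weak--strong product and of the full-sequence convergence is if anything slightly more careful.
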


\begin{proof}
 The proof lies in the fact that $\int_{\Omega}\varphi_{R_n} V_{i,0}\underset{n\to+\infty}{\longrightarrow}\int_{\Omega}\varphi_{\mathcal{R}} V_{i,0}$.
 
 Integrating
 \[
  -\sigma_V\Delta\varphi_{R_n}+(\xi+mR_n)\varphi_{R_n}=1,
 \]
 one gets
 \[
 \begin{array}{ll}
  \xi\int_{\Omega}\varphi_{R_n} & =|\Omega|-m\int_{\Omega}R_n\varphi_{R_n}\\
  & \leq |\Omega|.
 \end{array}
 \]
 
 Multiplying the equation on $\varphi_{R_n}$ by $\varphi_{R_n}$ and integrating over $\Omega$, one gets
 \[
  \sigma_V\int_{\Omega}|\nabla\varphi_{R_n}|^2+\int_{\Omega}(\xi+mR_n)\varphi_{R_n}^2=\int_{\Omega}\varphi_{R_n}\leq |\Omega|/\xi,
 \]
 hence the uniform boundedness of $\Vert\varphi_{R_n}\Vert_{\mathrm{H}^1(\Omega)}$. A direct application of the Rellich–Kondrachov theorem yields, up to extraction, the strong $\mathrm{L}^2$-convergence of $\varphi_{R_n}$ to $\overline{\varphi}$.
 
 Let $\psi\in\mathcal{C}^{2+\alpha}(\Omega)$. Multiplying the equation on $\varphi_{R_n}$ by $\psi$ and integrating over $\Omega$, one gets
 \[
  -\sigma_V\int_{\Omega}\psi\Delta\varphi_{R_n}+\int_{\Omega}(\xi+mR_n)\psi\varphi_{R_n}=\int_{\Omega}\psi
 \]
 and two integration by parts followed by passing to the limit $n\to+\infty$ yields
 \[
  -\sigma_V\int_{\Omega}\Delta\psi\overline{\varphi}+\int_{\Omega}(\xi+m\mathcal{R})\psi\overline{\varphi}=\int_{\Omega}\psi.
 \]
 
 By uniqueness of the weak solution of this equation, $\overline{\varphi}=\varphi_{\mathcal{R}}$.
\end{proof}

\subsection{Schwarz rearrangement}

In this section, we assume that $\Omega\subset\mathbb{R}^d$ is symmetric. In order to alleviate the proofs, we will further assume that $d=1$, hence $\Omega=[-L,L]$, $L>0$. However, it is not, as far as we know, a necessary assumption. For an arbitrary $f\in\mathrm{L}^{\infty}(\Omega)$, we will denote by $f_*$ the decreasing Schwarz rearrangement of $f$, which is a symmetric, radially nonincreasing function that preserves the measures of the level sets of $f$ (see \cite{talenti1994} and \cite{talenti2016} for a thorough definition of the Schwarz rearrangement). With our assumptions on $\Omega$, $f_*$ also belongs in $\mathrm{L}^{\infty}(\Omega)$.

We call the increasing Schwarz rearrangement of $f$, and denote $f^*$, the application 
\[
 f^*:\left\lbrace \begin{array}{cll}
      \Omega & \to & \mathbb{R}\\
      x & \mapsto & f_*(L-|x|).
     \end{array}\right.
\]

Our main theorem states that, if $V_{i,0}$ is its own decreasing Schwarz rearrangement and is decreasing, then the optimal refuge for the linearized harvest is not constant, and there exists a refuge that is its own decreasing Schwarz rearrangement which yields a better linearized harvest than the constant refuge. The same proof also works if $V_{i,0}$ is its own increasing Schwarz rearrangement and is increasing, and the result would be that the optimal refuge for the linearized harvest is not constant, and there exists a refuge that is its own increasing Schwarz rearrangement which yields a better linearized harvest than the constant refuge.

\begin{theorem}[Theorem~\ref{threarrange}]\label{thschwarz}
 Denote, for $c>0$, 
 
 $\mathcal{M}_c(\Omega)=\{f\in\mathcal{C}^{2+\alpha}(\overline{\Omega},[0,1]),\, \Vert f\Vert_{\mathrm{L}^1}=c\}$.
 
 Fix $V_{i,0}\in\mathcal{C}^{2+\alpha}(\Omega)$ and $\mathcal{R}>0$. Let us assume that $V_{i,0}=(V_{i,0})_*$ and is radially decreasing. Then,
 \[
  \eta_L(\mathcal{R},V_{i,0})\neq\underset{R\in\mathcal{M}_{|\Omega|\mathcal{R}}(\Omega)}{\max}\eta_L(R,V_{i,0}).
 \]
 
 Moreover, there exists $R\in\mathcal{M}_{|\Omega|\mathcal{R}}(\Omega)$ such that $R=R_*$ and $\eta_L(R,V_{i,0})>\eta_L(\mathcal{R},V_{i,0})$.
\end{theorem}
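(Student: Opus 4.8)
The plan is to reduce the maximization of $\eta_L$ to a simpler functional, obtain a maximizer by compactness, show that symmetrizing a refuge does not decrease $\eta_L$ (so the maximizer may be taken radially decreasing), and finally rule out the constant refuge by a first-variation computation; this last step already yields both assertions on its own.

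\textbf{Reduction.} On $\mathcal M_{|\Omega|\mathcal R}(\Omega)$ the quantities $\int_\Omega R=|\Omega|\mathcal R$ and $\int_\Omega V_{i,0}$ are fixed, so by the formula for $\eta_L$, maximizing $R\mapsto\eta_L(R,V_{i,0})$ over $\mathcal M_{|\Omega|\mathcal R}(\Omega)$ is equivalent to minimizing $F(R):=\int_\Omega\varphi_R V_{i,0}$, the prefactor $\beta_{VH}H^0(\xi+m)/m$ being positive. It is convenient to note that $w_R:=\xi^{-1}-\varphi_R\ge 0$ (since $\varphi_R\le 1/\xi$, cf. the proof of Lemma~\ref{concavitylinharvest}) solves $-\sigma_V\Delta w_R+(\xi+mR)w_R=(m/\xi)R$ with Neumann conditions, so $F(R)=\xi^{-1}\int_\Omega V_{i,0}-\int_\Omega w_R V_{i,0}$ and one equivalently wants to \emph{maximize} $\int_\Omega w_R V_{i,0}$. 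One may assume $0<\mathcal R<1$, the cases $\mathcal R\in\{0\}\cup[1,+\infty)$ making $\mathcal M_{|\Omega|\mathcal R}(\Omega)$ empty or a singleton. For existence, as announced in the introduction one works on the weak-$*$ closure $\overline{\mathcal M}=\{R\in\mathrm L^\infty(\Omega):0\le R\le1,\ \int_\Omega R=|\Omega|\mathcal R\}$, which is weak-$*$ compact; testing the equation for $\varphi_R$ with $\varphi_R$ and using $0\le R\le 1$, $\varphi_R\le 1/\xi$ gives a uniform $\mathrm H^1(\Omega)$ bound on $\varphi_R$, and Rellich–Kondrachov plus passage to the limit in the weak formulation (exactly as in the Proposition preceding Section~4.2, now for an arbitrary weak-$*$ convergent sequence) show that $R\mapsto\int_\Omega\varphi_R V_{i,0}$, hence $R\mapsto\eta_L(R,V_{i,0})$, is weak-$*$ continuous. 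Thus the maximum is attained on $\overline{\mathcal M}$ and equals the supremum over $\mathcal M_{|\Omega|\mathcal R}(\Omega)$ by density; call $R^\dagger$ a maximizer.

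\textbf{Symmetrization and strictness.} First, symmetrizing the refuge cannot decrease the harvest: since $V_{i,0}=(V_{i,0})_*$ is radially decreasing, Hardy–Littlewood gives $\int_\Omega w_R V_{i,0}\le\int_\Omega(w_R)_* V_{i,0}$, and a Talenti-type comparison for the Neumann problem solved by $w_R$ (cf.~\cite{talenti2016}) together with the Pólya–Szegő inequality \cite{talenti1994} upgrades this to $\int_\Omega w_R V_{i,0}\le\int_\Omega w_{R_*}V_{i,0}$, i.e. $\eta_L(R_*,V_{i,0})\ge\eta_L(R,V_{i,0})$; hence $R^\dagger$ may be chosen with $R^\dagger=(R^\dagger)_*$. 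Second, $\mathcal R$ is not optimal. For $\phi\in\mathcal C^{2+\alpha}(\overline\Omega)$ with $\int_\Omega\phi=0$, writing $c_0=(\xi+m\mathcal R)^{-1}$ so that $\varphi_{\mathcal R}\equiv c_0$, the derivative $u:=\partial_R\varphi_{\mathcal R}[\phi]$ solves $-\sigma_V\Delta u+(\xi+m\mathcal R)u=-mc_0\phi$ (Neumann); introducing $w$ solving $-\sigma_V\Delta w+(\xi+m\mathcal R)w=V_{i,0}$ (Neumann) and integrating by parts,
\[
 \left.\frac{\mathrm d}{\mathrm d\varepsilon}\right|_{\varepsilon=0}\eta_L(\mathcal R+\varepsilon\phi,V_{i,0})=-\frac{\beta_{VH}H^0(\xi+m)}{m}\int_\Omega u\,V_{i,0}=\beta_{VH}H^0(\xi+m)\,c_0\int_\Omega w\,\phi .
\]
Take $\phi$ even, smooth, strictly radially decreasing with zero mean, e.g.\ $\phi(x)=L^2/3-|x|^2$. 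Since $V_{i,0}$ is strictly radially decreasing, the maximum principle applied to $w'$ on $(0,L)$ shows $w$ is even and strictly radially decreasing; then $w$ and $\phi$ are comonotone and non-constant, so $\iint_{\Omega^2}(w(x)-w(y))(\phi(x)-\phi(y))\,\mathrm dx\,\mathrm dy>0$ (the integrand is $\ge 0$ everywhere and $>0$ off the null set $\{|x|=|y|\}$), which is the strict case of the extended Hardy–Littlewood inequality \cite{hajaiej2004}; expanding it gives $\int_\Omega w\,\phi>|\Omega|^{-1}\int_\Omega w\int_\Omega\phi=0$. Hence the displayed derivative is positive, so for small $\varepsilon>0$ the refuge $R_\varepsilon:=\mathcal R+\varepsilon\phi$ lies in $\mathcal M_{|\Omega|\mathcal R}(\Omega)$ (admissible because $0<\mathcal R<1$), is radially decreasing ($R_\varepsilon=(R_\varepsilon)_*$), and satisfies $\eta_L(R_\varepsilon,V_{i,0})>\eta_L(\mathcal R,V_{i,0})$. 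Both conclusions follow: $\eta_L(\mathcal R,V_{i,0})<\eta_L(R_\varepsilon,V_{i,0})\le\max_R\eta_L(R,V_{i,0})$, and $R_\varepsilon$ — or, equivalently, the radially decreasing maximizer $R^\dagger$ — is the sought symmetric, radially decreasing refuge beating the constant.

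\textbf{Main obstacle.} The delicate point is the symmetrization inequality $\eta_L(R_*,V_{i,0})\ge\eta_L(R,V_{i,0})$: the refuge $R$ enters the equation for $\varphi_R$ (equivalently $w_R$) both through the zeroth-order coefficient $\xi+mR$ and, after the substitution, through the source $(m/\xi)R$, so one cannot quote a textbook Talenti/Pólya–Szegő comparison directly but must track both effects under rearrangement. If one only wants the statement exactly as written, the first-variation argument alone suffices (witness $R_\varepsilon$), the symmetrization step being needed solely to exhibit a radially decreasing \emph{maximizer} rather than merely a radially decreasing competitor.
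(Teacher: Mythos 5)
Your core argument is correct and, at the strategic level, coincides with the paper's: both proofs perturb the constant refuge by a zero-mean, even, radially decreasing direction and show that the first variation of $\eta_L$ is strictly positive, which simultaneously yields both assertions. The execution differs in an interesting way. The paper takes $\tilde R(x)=\cos(\pi x/L)$, which is a Neumann eigenfunction of $-\Delta$ on $[-L,L]$, so the linearized state $u=\partial_R\varphi_{\mathcal R}[\tilde R]$ is computed \emph{explicitly} as a negative multiple of the same cosine, and the sign of $\int_\Omega uV_{i,0}$ is extracted from the antisymmetry $u(L/2-x)=-u(L/2+x)$ together with the monotonicity of $V_{i,0}$. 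You instead take $\phi(x)=L^2/3-|x|^2$, avoid solving for $u$ by passing to the adjoint state $w$ (solving $-\sigma_V\Delta w+(\xi+m\mathcal R)w=V_{i,0}$), show $w$ is even and radially decreasing via the maximum principle applied to $w'$ on $(0,L)$, and conclude $\int_\Omega w\phi>0$ by the Chebyshev/comonotonicity inequality. Your route is more robust (it works for any radially decreasing zero-mean perturbation and does not rely on an explicit eigenfunction), at the cost of invoking elliptic regularity and the strong maximum principle for $w'$; the paper's route is more elementary but tied to the one-dimensional cosine computation. Both correctly observe that admissibility of $\mathcal R+\varepsilon\phi$ in $\mathcal M_{|\Omega|\mathcal R}(\Omega)$ requires $0<\mathcal R<1$ and small $\varepsilon$, a restriction the theorem statement leaves implicit.

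Two caveats on the optional scaffolding. The symmetrization inequality $\eta_L(R_*,V_{i,0})\ge\eta_L(R,V_{i,0})$ is asserted, not proved: as you yourself note, $R$ enters the equation for $w_R$ both through the potential $\xi+mR$ and through the source $(m/\xi)R$, and these two effects push $w_R$ in opposite directions under rearrangement, so no off-the-shelf Talenti or P\'olya--Szeg\H{o} comparison applies; this claim should be removed or proved. Since you correctly isolate it as unnecessary for the statement (the witness $R_\varepsilon$ alone suffices), this does not invalidate the proof. Likewise the weak-$*$ existence discussion is harmless but superfluous: the first claim of the theorem follows from $\eta_L(\mathcal R,V_{i,0})<\eta_L(R_\varepsilon,V_{i,0})\le\sup_R\eta_L(R,V_{i,0})$ without ever attaining the supremum.
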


\begin{proof}
 Recall that
 \[
  \eta_L(R,V_{i,0})=H^0\left(|\Omega|-\displaystyle\int_{\Omega}R\right)+\dfrac{\beta_{VH}H^0}{m}\left(\displaystyle\int_{\Omega}V_{i,0}-(\xi+m)\int_{\Omega}\varphi_R V_{i,0}\right),
 \]
 hence the quantity we aim to minimize is
 \[
  \int_{\Omega}\varphi_R V_{i,0}
 \]
 with
 \[
  -\sigma_V\varphi_R''+(\xi+mR)\varphi_R=1.
 \]
 
 Let $\tilde{R}\in\mathcal{M}_0(\Omega)$. Denote $\psi:\varepsilon\mapsto\varphi_{\mathcal{R}+\varepsilon\tilde{R}}\in\mathcal{C}^2([0,+\infty),\mathcal{M}_{|\Omega|\mathcal{R}}(\Omega))$ and $u=\psi'(0)$. Then, for all $\varepsilon>0$, $\varphi_{\mathcal{R}+\varepsilon\tilde{R}}=\varphi_{\mathcal{R}}+\varepsilon u+o(\varepsilon)$. With $\varphi_{\mathcal{R}}=1/(\xi+m\mathcal{R})$, injecting this Taylor expansion in the equation of $\varphi_{\mathcal{R}+\varepsilon\tilde{R}}$, one gets, at first order,
 \[
  -\sigma_V(\varphi_{\mathcal{R}}+\varepsilon u)''+(\xi+m(\mathcal{R}+\varepsilon\tilde{R}))(\varphi_{\mathcal{R}}+\varepsilon u)=1,
 \]
 hence, dividing by $\varepsilon$ and letting it go to $0$,
 \[
  -\sigma_Vu''+(\xi+m\mathcal{R})u=-m\tilde{R}/(\xi+m\mathcal{R}).
 \]
 
 By Schwarz theorem, $u_{|\partial\Omega}=\dfrac{\partial}{\partial\varepsilon}((\varphi_{\mathcal{R+\varepsilon\tilde{R}}})_{|\partial\Omega})(0)$. Solving this ordinary differential equation with Neumann boundary conditions on $\Omega=[-L,L]$ with $\tilde{R}(x)=\cos\left(\dfrac{\pi}{L}x\right)$, one gets
 \[
  u(x)=\dfrac{-m}{(\xi+m\mathcal{R})(\xi+m\mathcal{R}+\sigma_V(\pi/L)^2)}\cos\left(\dfrac{\pi}{L}x\right).
 \]
 
 By symmetry of $u$ and $V_{i,0}$,
 \[
  \int_{\Omega}uV_{i,0}=2\int_0^{L/2}uV_{i,0}+2\int_{L/2}^LuV_{i,0}.
 \]
 Because $u(L/2-x)=-u(L/2+x)$ for all $x\in[0,L/2]$, after a change of variable, one gets
 \[
  \int_0^{L/2}uV_{i,0}=-\int_{L/2}^Lu(x)V_{i,0}(L-x)\mathrm{d}x
 \]
 and therefore, because $V_{i,0}$ is decreasing on $[0,L]$,
 \[
  \int_{\Omega}uV_{i,0}=2\int_{L/2}^Lu(x)(V_{i,0}(x)-V_{i,0}(L-x))\mathrm{d}x<0.
 \]
 
 Moreover,
 \[
  \int_{\Omega}\varphi_{\mathcal{R}+\varepsilon\tilde{R}}V_{i,0}=\int_{\Omega}\varphi_{\mathcal{R}}V_{i,0}+\varepsilon\int_{\Omega}uV_{i,0}+o(\varepsilon),
 \]
 hence for a small enough $\varepsilon$, $\int_{\Omega}\varphi_{\mathcal{R}+\varepsilon\tilde{R}}V_{i,0}<\int_{\Omega}\varphi_{\mathcal{R}}V_{i,0}$, i.e $\eta_L(\mathcal{R}+\varepsilon\tilde{R},V_{i,0})>\eta_L(\mathcal{R},V_{i,0})$. One can easily verify that $\mathcal{R}+\varepsilon\tilde{R}\in\mathcal{M}_{|\Omega|\mathcal{R}}(\Omega)$ and $(\mathcal{R}+\varepsilon\tilde{R})_*=\mathcal{R}+\varepsilon\tilde{R}$.

\end{proof}

The next corollary is similar: if we fix the refuge $R$, and if $R$ is its own Schwarz rearrangement, then the optimal $V_{i,0}$ for the linearized harvest is its own Schwarz rearrangement.

\begin{corollary}\label{corschwarz}
 Fix $R\in\mathcal{C}^{2+\alpha}(\Omega)$.
 
 Denote, for $c_1,c_2>0$, $\mathcal{M}_{c_1,c2}(\Omega)=\{f\in \mathrm{L}^{\infty}(\Omega),\, \Vert f\Vert_{\mathrm{L}^1}=c_1,\,\Vert f\Vert_{\mathrm{L}^{\infty}}\leq c_2\}$. The quantities
 \[
  m_V:=\underset{V_{i,0}\in\mathcal{M}_{c_1,c_2}(\Omega)}{\min}\eta_L(R,V_{i,0})
 \]
 and
 \[
  M_V:=\underset{V_{i,0}\in\mathcal{M}_{c_1,c_2}(\Omega)}{\max}\eta_L(R,V_{i,0})
 \]
 exist. Moreover, if $R=R_*$ and is not constant, then
 \[
  \{V_{i,0}\in\mathcal{M}_{c_1,c2}(\Omega),\,\eta_L(R,V_{i,0})=m_V\}\subset\{V_{i,0}\in\mathcal{M}_{c_1,c2}(\Omega),\,V_{i,0}=(V_{i,0})^*\}
 \]
 and
 \[
  \{V_{i,0}\in\mathcal{M}_{c_1,c2}(\Omega),\,\eta_L(R,V_{i,0})=M_V\}\subset\{V_{i,0}\in\mathcal{M}_{c_1,c2}(\Omega),\,V_{i,0}=(V_{i,0})_*\}.
 \]
 
\end{corollary}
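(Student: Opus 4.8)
The plan is to observe that, once $R$ is fixed and the constraint $\int_\Omega V_{i,0}=c_1$ is imposed, $\eta_L(R,V_{i,0})$ differs from a constant only by $-\frac{\beta_{VH}H^0(\xi+m)}{m}\int_\Omega\varphi_R V_{i,0}$, whose coefficient is strictly negative. Hence, over $\mathcal M_{c_1,c_2}(\Omega)$, maximizing $\eta_L$ is the same as minimizing $J(V_{i,0}):=\int_\Omega\varphi_R V_{i,0}$, and minimizing $\eta_L$ is the same as maximizing $J$, so the whole statement reduces to describing the optimizers of the linear functional $J$. Existence of $m_V$ and $M_V$ is then immediate: taking (as $V_{i,0}$ is a density) the elements of $\mathcal M_{c_1,c_2}(\Omega)$ nonnegative, this set is a bounded, convex subset of $\mathrm L^\infty(\Omega)$ that is weak-$*$ closed (the constraints $f\geq0$, $\int_\Omega f=c_1$ and $\|f\|_{\mathrm L^\infty}\leq c_2$ all pass to weak-$*$ limits), hence weak-$*$ compact by Banach--Alaoglu, while $J$ is weak-$*$ continuous since $\varphi_R\in\mathcal C^{2+\alpha}(\overline\Omega)\subset\mathrm L^1(\Omega)$.

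The core of the argument is the monotonicity of $\varphi_R$. Since $m>0$ and $R=R_*$ is symmetric and radially nonincreasing, the coefficient $q:=\xi+mR$ is symmetric and radially nonincreasing on $[0,L]$; by uniqueness $\varphi_R$ is even, so $\varphi_R'(0)=0$, and $\varphi_R'(L)=0$ by the Neumann condition. Differentiating $-\sigma_V\varphi_R''+q\varphi_R=1$, the function $w:=\varphi_R'$ solves $-\sigma_V w''+qw=-q'\varphi_R$ on $(0,L)$; here $q'\leq0$ and $\varphi_R>0$ (Lemma \ref{concavitylinharvest}), so the right-hand side is nonnegative, $q>0$, and $w$ vanishes at both endpoints, whence the maximum principle gives $w\geq0$, i.e.\ $\varphi_R$ is radially nondecreasing. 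If moreover $R$ is not constant then $q'\not\equiv0$, the right-hand side is not identically zero, and the strong maximum principle forces $w>0$ on $(0,L)$ (were $w\equiv0$, then $q'\varphi_R\equiv0$, so $q$, hence $R$, would be constant). Therefore $\varphi_R$ is symmetric and \emph{strictly} radially increasing, so $\varphi_R=(\varphi_R)^*$ and all its superlevel sets are outer annuli.

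It remains to invoke the extended Hardy--Littlewood inequality together with its equality case (\cite{hajaiej2004}): because $\varphi_R$ is strictly radially increasing, for every $g\in\mathcal M_{c_1,c_2}(\Omega)$,
\[
 \int_\Omega\varphi_R\,g_*\ \leq\ \int_\Omega\varphi_R\,g\ \leq\ \int_\Omega\varphi_R\,g^*,
\]
with equality on the left (resp.\ right) if and only if $g=g_*$ (resp.\ $g=g^*$). The mechanism is the layer-cake formula $g=\int_0^{+\infty}\mathbf 1_{\{g>t\}}\,\mathrm{d}t$ together with the fact that, $\varphi_R$ being strictly radially increasing, among subsets of $\Omega$ of a prescribed measure the integral $\int_A\varphi_R$ is uniquely maximized by the outer annulus and uniquely minimized by the centered ball. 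Since $\Omega$ is symmetric, $g_*$ and $g^*$ lie in $\mathcal M_{c_1,c_2}(\Omega)$ (Schwarz rearrangement preserves the $\mathrm L^1$ and $\mathrm L^\infty$ norms). Hence if $V_{i,0}$ achieves $M_V$ it minimizes $J$, so $J(V_{i,0})\leq J((V_{i,0})_*)$; combined with $J(V_{i,0})\geq J((V_{i,0})_*)$ from the displayed inequality this forces equality, whence $V_{i,0}=(V_{i,0})_*$. Symmetrically, a minimizer of $\eta_L$ maximizes $J$ and must equal $(V_{i,0})^*$.

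The main obstacle is precisely the \emph{strict} version of these two facts --- strict monotonicity of $\varphi_R$ and the resulting equality case in the Hardy--Littlewood inequality --- since it is strictness, not the inequality itself, that pins the optimizers down to coinciding with their Schwarz rearrangements; both genuinely use the hypothesis that $R$ is not constant, as for constant $R$ the function $\varphi_R$ is constant, $J$ is independent of $V_{i,0}$, and the inclusions of the statement fail.
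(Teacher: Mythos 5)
Your proposal is correct and follows the same overall architecture as the paper's proof: reduce the problem to optimizing the linear functional $J(V_{i,0})=\int_\Omega\varphi_R V_{i,0}$ over $\mathcal{M}_{c_1,c_2}(\Omega)$, establish that $\varphi_R$ is symmetric and radially increasing, and conclude via the Hardy--Littlewood/Talenti inequalities together with the equality case from \cite{hajaiej2004}. The one step where you take a genuinely different route is the monotonicity of $\varphi_R$: the paper characterizes $\varphi_R$ as the unique minimizer of the energy $\mathcal{E}(u)=\tfrac12\int_\Omega\bigl(\sigma_V|\nabla u|^2+(\xi+mR)u^2\bigr)-\int_\Omega u$ and shows $\mathcal{E}(u^*)\leq\mathcal{E}(u)$ using the Brascamp--Lieb--Luttinger rearrangement inequality and P\'olya--Szeg\H{o}, whence $\varphi_R=\varphi_R^*$; you instead differentiate the ODE and apply the maximum principle to $w=\varphi_R'$ on $(0,L)$, using $w(0)=w(L)=0$ and the sign of $-q'\varphi_R$. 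Your argument is more elementary and exploits the one-dimensional setting the paper already restricts to in this section; the paper's variational argument would survive in higher dimensions. Your route has the additional merit of delivering \emph{strict} radial monotonicity of $\varphi_R$ when $R$ is non-constant directly from the strong maximum principle, which is exactly what the equality case of Hardy--Littlewood requires; the paper only asserts ``$\varphi_R$ is increasing'' at that point without further justification. You are also slightly more careful on the existence step, working in the weak-$*$ topology with Banach--Alaoglu and noting that nonnegativity is needed for the constraint $\Vert f\Vert_{\mathrm{L}^1}=c_1$ to pass to the limit, where the paper simply invokes weak compactness of a closed convex bounded subset of $\mathrm{L}^{\infty}(\Omega)$.
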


\begin{proof}
 With this set of constraints, one optimizes $\eta_L$ by optimizing the quantity\linebreak $\int_{\Omega}\varphi_R V_{i,0}$. Let $(\underline{V}_n)$ be a minimizing sequence, and $(\overline{V}_n)$ be a maximizing sequence of $V\mapsto\int_{\Omega}\varphi_R V$. The set $\mathcal{M}_{c_1,c_2}(\Omega)$ is a closed, convex and bounded subset of $\mathrm{L}^{\infty}(\Omega)$, therefore it is weakly compact. Hence, up to an extraction, the sequences converge to $\underline{V}_{\infty},\overline{V}_{\infty}\in\mathcal{M}_{c_1,c_2}$ minimizing and maximizing $\eta_L$.
 
 Let us suppose that $R=R_*$. By positivity of the principal eigenvalue of $-\sigma_V\Delta+\xi+mR$, the energy
 \[
 \mathcal{E}:\left\lbrace \begin{array}{rcl}
              \mathcal{C}^{2+\alpha}(\Omega,\mathbb{R}_+) & \to & \mathbb{R}\\
              u & \mapsto & \dfrac{1}{2}\left(\displaystyle\int_{\Omega}\sigma_V|\nabla u|^2+(\xi+mR)u^2\right)-\displaystyle\int_{\Omega}u
             \end{array}\right.
 \]
 is coercive, therefore $\varphi_R$ is the unique minimizer of $\mathcal{E}$.
 
 Moreover, by the general rearrangement inequality (see e.g Theorem 3.4 of \cite{brascamp1974}), for all $u\in\mathcal{C}^{2+\alpha}(\Omega,\mathbb{R}_+)$ and for a large constant $C>0$,
 \[
  \int_{\Omega}(C-R)^*(u^*)^2\geq\int_{\Omega}(C-R)u^2.
 \]
 
 By the fifth property of part $3$ of \cite{talenti2016} (page 10),
 \[
  \int_{\Omega}(u^*)^2=\int_{\Omega}u^2,
 \]
 which yields
 \[
  \int_{\Omega}R(u^*)^2\leq\int_{\Omega}Ru^2.
 \]
 
 By the Pólya–Szegő inequality (see \cite{talenti1994}), for all $u\in\mathcal{C}^{2+\alpha}(\Omega,\mathbb{R}_+)$,
 \[
  \int_{\Omega}|\nabla u^*|^2\leq\int_{\Omega}|\nabla u|^2.
 \]
 
 The previous inequalities yield, for all $u\in\mathcal{C}^{2+\alpha}(\Omega,\mathbb{R}_+)$,
 \[
  \mathcal{E}(u^*)\leq\mathcal{E}(u),
 \]
 hence, by uniqueness of the minimizer of $\mathcal{E}$, $\varphi_R=\varphi_R^*$.
 
 By Talenti inequalities \cite{talenti1994}, for all $V_{i,0}\in\mathcal{M}_{c_1,c_2}(\Omega)$,
 \[
  \int_{\Omega}\varphi_RV_{i,0}\leq\int_{\Omega}\varphi_R(V_{i,0})^*
 \]
 and
 \[
  \int_{\Omega}\varphi_RV_{i,0}\geq\int_{\Omega}\varphi_R(V_{i,0})_*.
 \]
 
 Let $V_{i,0}\in\mathcal{M}_{c_1,c_2}(\Omega)$ such that $\eta_L(R,V_{i,0})=m_V.$ Then $V_{i,0}$ maximizes the quantity $\int_{\Omega}\varphi_R V_{i,0}$. By the first inequality above,
 \[
  \int_{\Omega}\varphi_RV_{i,0}=\int_{\Omega}\varphi_R(V_{i,0})^*.
 \]
 
 By our assumption on $R$, $\varphi_R$ is increasing. By \cite{hajaiej2004}, $V_{i,0}=(V_{i,0})^*$. The case $\eta_L(R,V_{i,0})=M_V$ is treated identically, with $\varphi_R$ being decreasing.
\end{proof}

\subsection{Explicit optimizers in the homogeneous case}

In this section, we assume that, for all $x\in\Omega$, $V_{i,0}(x)$ follows a given probability distribution $\mathbb{P}_x$ on $\mathbb{R}_+$, such that its mean $x\mapsto\mathbb{E}[V_{i,0}(x)]$ is known and homogeneous in space. Since this section is not intended to focus on probability concepts, we intentionally refrain from delving into a rigorous definition of this probability distribution. The biological rationale for this approximation is as follows: the primary provenance of aphids in the field is wind dispersal. Aphids transported by the wind cover distances significantly greater than the size of an average beet field, making it reasonable to employ a uniform mean for initially infected vectors.

The mean of the linearized harvest is
{\small\[
 \mathbb{E}[\eta_L(R,V_{i,0})]=H^0\left(|\Omega|-\displaystyle\int_{\Omega}R\right)+\dfrac{\beta_{VH}H^0}{m}\left(\displaystyle\int_{\Omega}\mathbb{E}[V_{i,0}]-(\xi+m)\int_{\Omega}\varphi_R \mathbb{E}[V_{i,0}]\right).
\]}

We prove in the following that the constant refuge $\mathcal{R}$ is optimal, for the mean of the linearized harvest $\mathbb{E}[\eta_L]$, in the space of bounded $\mathcal{C}^{2+\alpha}$ functions defined on $\Omega$ we denote $\mathcal{C}^{2+\alpha}_b(\Omega,\mathbb{R})$. Note that rather than assuming a constant mean for $V_{i,0}$ in space, an alternative approach is to assume $V_{i,0}$ itself to be constant. This would yield a similar result, asserting that a constant refuge maximizes the linearized harvest. The choice of using the mean is purely biological; it renders the assumption about the initial condition more realistic, since having $V_{i,0}$ constant in space appears highly unlikely in real-life scenarios.

\begin{theorem}[Theorem~\ref{thconstant}]\label{optconst}
 Suppose that $x\mapsto\mathbb{E}[V_{i,0}]$ is constant. Let $\mathcal{R}=\left(\dfrac{\sqrt{\beta_{VH}(\xi+m)\mathbb{E}[V_{i,0}]}-\xi}{m}\right)^+$. Then
 \[
  \mathbb{E}[\eta_L(\mathcal{R},V_{i,0})]=\underset{R\in\mathcal{C}^{2+\alpha}_b(\Omega,\mathbb{R})}{\max}\mathbb{E}[\eta_L(R,V_{i,0})]
 \]
\end{theorem}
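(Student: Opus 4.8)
The plan is to reduce the optimization to a variational characterization, compute the Gâteaux derivative of $R\mapsto\eta_L(R,\cdot)$ explicitly, and then conclude from the concavity already proved in Lemma~\ref{concavitylinharvest}. First I would note that $\eta_L(R,\cdot)$ is affine in its second argument, so that, writing $\mu:=\mathbb{E}[V_{i,0}]$ for the constant mean, one has $\mathbb{E}[\eta_L(R,V_{i,0})]=\eta_L(R,\mu)$. Discarding the terms $H^0|\Omega|$ and $\tfrac{\beta_{VH}H^0}{m}\mu|\Omega|$ that do not depend on $R$, maximizing $R\mapsto\mathbb{E}[\eta_L(R,V_{i,0})]$ over the admissible class is the same as minimizing
\[
J(R):=H^0\int_\Omega R+\frac{\beta_{VH}H^0(\xi+m)\mu}{m}\int_\Omega\varphi_R,
\]
where $\varphi_R\in\mathcal{C}^{2+\alpha}(\Omega)$ is the unique solution of $-\sigma_V\Delta\varphi_R+(\xi+mR)\varphi_R=1$ with Neumann boundary conditions (so that $\xi+mR\geq\xi>0$ and $\varphi_R>0$ on the nonnegative refuges used in Lemma~\ref{concavitylinharvest}). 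Since $R\mapsto\eta_L(R,\mu)$ is concave by Lemma~\ref{concavitylinharvest}, it suffices to exhibit an admissible $R$ at which the Gâteaux derivative is nonpositive in every admissible direction, and vanishes when the point is interior.

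Next I would compute $D\eta_L(R,\mu)$. Differentiating the equation for $\varphi_R$ along a direction $h$, the derivative $u_h:=D_R\varphi_R\cdot h$ solves $-\sigma_V\Delta u_h+(\xi+mR)u_h=-mh\varphi_R$ with Neumann conditions. Testing this against $\varphi_R$, testing $-\sigma_V\Delta\varphi_R+(\xi+mR)\varphi_R=1$ against $u_h$, and using the self-adjointness of the Neumann realization of $-\sigma_V\Delta+(\xi+mR)$, one obtains
\[
\int_\Omega u_h=\int_\Omega\varphi_R\bigl(-\sigma_V\Delta u_h+(\xi+mR)u_h\bigr)=-m\int_\Omega h\,\varphi_R^2 .
\]
Substituting into the derivative of $\eta_L$ gives the clean formula
\[
D\eta_L(R,\mu)\cdot h=-H^0\int_\Omega h-\frac{\beta_{VH}H^0(\xi+m)\mu}{m}\int_\Omega u_h=H^0\int_\Omega h\bigl(\beta_{VH}(\xi+m)\mu\,\varphi_R^2-1\bigr).
\]

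Finally I would evaluate this at the constant refuge $\mathcal{R}=\left(\dfrac{\sqrt{\beta_{VH}(\xi+m)\mu}-\xi}{m}\right)^+$. For a constant refuge, uniqueness of $\varphi_R$ forces $\varphi_{\mathcal{R}}\equiv(\xi+m\mathcal{R})^{-1}$, so the bracket above equals $\dfrac{\beta_{VH}(\xi+m)\mu}{(\xi+m\mathcal{R})^2}-1$. If $\beta_{VH}(\xi+m)\mu>\xi^2$, then $\mathcal{R}>0$ and $\xi+m\mathcal{R}=\sqrt{\beta_{VH}(\xi+m)\mu}$, so this bracket is identically zero; hence $D\eta_L(\mathcal{R},\mu)\equiv 0$ and, by concavity, $\mathcal{R}$ is a global maximizer. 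If $\beta_{VH}(\xi+m)\mu\leq\xi^2$, then $\mathcal{R}=0$, $\varphi_0\equiv 1/\xi$, and the bracket equals $\dfrac{\beta_{VH}(\xi+m)\mu-\xi^2}{\xi^2}\leq 0$; therefore $D\eta_L(0,\mu)\cdot h\leq 0$ for every admissible direction $h\geq 0$ at the boundary point $R=0$, and concavity yields $\eta_L(R,\mu)\leq\eta_L(0,\mu)+D\eta_L(0,\mu)\cdot R\leq\eta_L(0,\mu)$ for all admissible $R$. Either way $\mathcal{R}$ maximizes $\mathbb{E}[\eta_L(\cdot,V_{i,0})]$.

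\textbf{Main obstacle.} The computation itself is mechanical; the delicate points are (i) the differentiability of $R\mapsto\varphi_R$ in the Banach space at hand, which comes from the implicit function theorem applied to the linear elliptic problem and which the paper has already agreed to use without detail, and (ii) the degenerate regime $\mathcal{R}=0$, where one cannot merely set the derivative to zero but must combine the one-sided sign of $D\eta_L(0,\mu)$ with concavity and with the nonnegativity of $R$ inherited from Lemma~\ref{concavitylinharvest}.
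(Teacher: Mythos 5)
Your proof is correct, and it takes a genuinely different route from the paper's. The paper argues by necessity: it invokes Lemma~\ref{concavitylinharvest} to get a unique maximizer, writes the stationarity condition $\int_\Omega \partial_R\varphi_R\,\zeta = C\int_\Omega\zeta$, upgrades this to the pointwise identity $\partial_R\varphi_R\,\zeta\equiv C$ via a weak-convergence argument with step functions, substitutes into the linearized elliptic equation to force $\varphi_R=(\xi+mR)/(\beta_{VH}(\xi+m)\mathbb{E}[V_{i,0}])$, and finally shows that $R$ must solve $-\sigma_V m\Delta R+(\xi+mR)^2=\beta_{VH}(\xi+m)\mathbb{E}[V_{i,0}]$, whose unique Neumann solution is the constant $\mathcal{R}$. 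You instead argue by sufficiency: the adjoint identity $\int_\Omega u_h=-m\int_\Omega h\,\varphi_R^2$, obtained by testing the equation for $u_h$ against $\varphi_R$ and using self-adjointness, gives the closed-form derivative $D\eta_L(R,\mu)\cdot h=H^0\int_\Omega h\,(\beta_{VH}(\xi+m)\mu\,\varphi_R^2-1)$, which the paper never writes down; you then simply check that the candidate $\mathcal{R}$ annihilates (or makes one-sided nonpositive) this derivative and conclude by concavity. Your route is shorter and avoids both the step-function argument and the appeal to uniqueness for the semilinear problem in $R$; the paper's route buys the extra information that $\mathcal{R}$ is the \emph{only} critical point. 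Your treatment of the degenerate case $\mathcal{R}=0$ via the one-sided sign of the derivative plus the concavity inequality is essentially what the paper does more tersely. One caveat applies equally to both arguments: the maximum in the statement is taken over all of $\mathcal{C}^{2+\alpha}_b(\Omega,\mathbb{R})$, yet the positivity of $\varphi_R$ and the supersolution bound $\varphi_R\leq 1/\xi$ used in Lemma~\ref{concavitylinharvest} require $R\geq 0$ (or at least $\xi+mR>0$); you flag this dependence explicitly, which is a point in your favour rather than a gap specific to your proof.
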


\begin{proof}
First of all, by Lemma \ref{concavitylinharvest}, the application $R\mapsto\eta_L(R,V_{i,0})$ is concave and coercive, which immediately yields the existence and uniqueness of the maximizer of $\eta_L$, which nullifies $\partial_R\mathbb{E}[\eta_L(R,V_{i,0})]=\partial_R\eta_L(R,\mathbb{E}[V_{i,0}])$.

Let $R\in\mathcal{C}^{2+\alpha}_b(\Omega,\mathbb{R})$ so that $\partial_R\mathbb{E}[\eta_L(R,V_{i,0})]=0$, and let $\zeta\in\mathcal{C}^{2+\alpha}_b(\Omega,\mathbb{R})$. We have
\[
 \begin{array}{ll}
  \partial_R\mathbb{E}[\eta_L(R,V_{i,0})]\cdot\zeta & =-H^0\int_{\Omega}\zeta-\beta_{VH}H^0(1+\dfrac{\xi}{m})\mathbb{E}[V_{i,0}]\int_{\Omega}\partial_R\varphi_R\zeta\\
  & =0,
 \end{array}
\]
hence
\[
 \int_{\Omega}\partial_R\varphi_R\zeta=-\dfrac{m}{\beta_{VH}(\xi+m)\mathbb{E}[V_{i,0}]}\int_{\Omega}\zeta
\]
 
Denote $C=-\dfrac{m}{\beta_{VH}(\xi+m)\mathbb{E}[V_{i,0}]}$. There exists a sequence of step functions $(\psi_k)$ such that $\Vert\psi_k-\partial_R\varphi_R\zeta\Vert_{\mathrm{L}^2}\to 0$. Let $h$ be a function in $\mathrm{L}^2(\Omega)$.
\[
 \begin{array}{ll}
  |\int_{\Omega}(\psi_k-C)h| & =|\int_{\Omega}(\psi_k-\partial_R\varphi_R\zeta+\partial_R\varphi_R\zeta-C)h|\\
  & =|\int_{\Omega}(\psi_k-\partial_R\varphi_R\zeta)h|\\
  & \leq\Vert\psi_k-\partial_R\varphi_R\zeta\Vert_{\mathrm{L}^2}\sqrt{\int_{\Omega}h^2}\\
  & \underset{k\to+\infty}{\longrightarrow}0.
 \end{array}
\]
 
We have shown the weak convergence in $\mathrm{L}^2$ of $(\psi_k)$ to $C$. By uniqueness of the limit, $\partial_R\varphi_R\zeta=C$. By differentiating the equation on $\varphi_R$ with respect to $R$, one gets
\[
 -\sigma_V\Delta\partial_R\varphi_R\zeta+(\xi+mR)\partial_R\varphi_R\zeta=-m\varphi_R\zeta,
\]
therefore
\[
 \varphi_R=\dfrac{\xi+mR}{\beta_{VH}(\xi+m)\mathbb{E}[V_{i,0}]}.
\]
 
Plugging the expression of $\varphi_R$ into its equation, it yields
\[
 \left\lbrace \begin{array}{ll}
  -\sigma_Vm\Delta R+(\xi+mR)^2=\beta_{VH}(\xi+m)\mathbb{E}[V_{i,0}]\\
  \partial_n R=0.
 \end{array}\right.
\]

By uniqueness of the solution of this elliptic problem with Neumann boundary conditions (see e.g Section 3.3 of \cite{pao}), we deduce
\[
 R=\dfrac{\sqrt{\beta_{VH}(\xi+m)\mathbb{E}[V_{i,0}]}-\xi}{m}.
\]

Reciprocally, if $R=\dfrac{\sqrt{\beta_{VH}(\xi+m)\mathbb{E}[V_{i,0}]}-\xi}{m}$, following the proof backwards, it is clear that $\partial_R\mathbb{E}[\eta_L(R,V_{i,0})]=0$. If this quantity is negative, i.e if $\beta_{VH}(\xi+m)\mathbb{E}[V_{i,0}]<\xi^2$, then a straightforward computation yields $\partial_R\mathbb{E}[\eta_L(0,V_{i,0})]\cdot\zeta=-H^0\int_{\Omega}\zeta+\beta_{VH}H^0(1+\dfrac{\xi}{m})\mathbb{E}[V_{i,0}]\dfrac{m}{\xi^2}\int_{\Omega}\zeta<0$. Therefore, in this case, the optimal refuge is $0$.
\end{proof}

\textbf{Remark:} We just proved that, if $V_{i,0}$ is constant, then $\partial_R\eta_L=0\iff R=\mathcal{R}$. With a very similar proof, we can obtain that, if $R$ is constant, then $\partial_R\eta_L=0\iff V_{i,0}=\dfrac{(\xi+mR)^2}{\beta_{VH}m(1+\xi/m)}$. We deduce that, if $R$ is constant and $V_{i,0}$ is not constant, then $\eta_L(R,V_{i,0})$ is not optimal.

\bibliographystyle{plain}
\bibliography{biblioBaptiste}

\begin{thebibliography}{10}

\bibitem{allaire2010}
Grégoire Allaire.
\newblock Lecture 1 introduction to homogenization theory, 2010.

\bibitem{bakhvalov1989}
Nikolai Bakhvalov and Grigory Panasenko.
\newblock {\em Homogenization: Averaging Processes in Periodic Media}.
\newblock Kluwer, 1989.

\bibitem{brascamp1974}
Herm~Jan Brascamp, Elliott~Hershel Lieb, and Joaquin~Mazdak Luttinger.
\newblock A general rearrangement inequality for multiple integrals.
\newblock {\em Journal of functional analysis}, 1974.

\bibitem{cantrellcosnerlou2010}
Robert~Stephen Cantrell, Chris Cosner, and Yuan Lou.
\newblock Evolution of dispersal and the ideal free distribution.
\newblock {\em Department of Mathematics, University of Miami and Ohio State
  University, USA}, 2010.

\bibitem{dancer1999}
Edward~Norman Dancer, Danielle Hilhorst, Masato Mimura, and Lambertus Peletier.
\newblock Spatial segregation limit of a competition–diffusion system.
\newblock {\em Euro. Journal of Applied Mathematics}, 1999.

\bibitem{murattartar1977}
François~Murat et~Luc~Tartar.
\newblock H-convergence.
\newblock {\em Séminaire Analyse Fonctionnelle et Numérique de
  l’Université d’Alger}, 1977.

\bibitem{gilbargtrudinger}
David Gilbarg and Neil Trudinger.
\newblock {\em Elliptic partial differential equations of second order}.
\newblock Springer, 2001.

\bibitem{girardinmaucourt2023}
Léo Girardin and Baptiste Maucourt.
\newblock Agro-ecological control of a pest-host system: preventing spreading.
\newblock {\em SIAM Journal on Applied Mathematics}, 2023.

\bibitem{hajaiej2004}
Hichem Hajaiej.
\newblock Cases of equality and strict inequality in the extended
  hardy–littlewood inequalities.
\newblock {\em The Royal Society of Edinburgh}, 2004.

\bibitem{huska}
Juraj Húska.
\newblock Harnack inequality and exponential separation for oblique derivative
  problems on lipschitz domains.
\newblock {\em Journal of Differential Equations}, 2006.

\bibitem{pao}
Chia-Ven Pao.
\newblock {\em Nonlinear Parabolic and Elliptic Equations}.
\newblock Springer US, 1993.

\bibitem{talenti1994}
Giorgio Talenti.
\newblock Inequalities in rearrangement invariant function spaces.
\newblock {\em Institute of Mathematics}, 1994.

\bibitem{talenti2016}
Giorgio Talenti.
\newblock The art of rearranging.
\newblock {\em Milan Journal of Mathematics}, 2016.

\end{thebibliography}

\end{document}